\newtheorem{THM}{Theorem}
\newtheorem{COR}[THM]{Corollary}
\newtheorem{thm}{Theorem}
\numberwithin{thm}{section}
\newtheorem{lem}[thm]{Lemma}
\newtheorem{prop}[thm]{Proposition}
\newtheorem{cor}[thm]{Corollary}
\theoremstyle{definition}
\newtheorem{question}[THM]{Question}
\DeclareMathOperator{\out}{Out}
\DeclareMathOperator{\aut}{Aut}
\DeclareMathOperator{\inn}{Inn}
\DeclareMathOperator{\hol}{Hol}
\DeclareMathOperator{\st}{st}
\DeclareMathOperator{\en}{en}
\newcommand{\doublebackslash}{\backslash\mkern-5mu\backslash}
\newcommand{%
	    \def\svgwidth{\columnwidth}
	        \import{./figures}{.pdf_tex}
	}[1]{%
	    \def\svgwidth{\columnwidth}
	        \import{./figures}{#1.pdf_tex}
	}
\begin{document}
\title[One-endedness of outer automorphism groups of free products]{One-endedness of outer automorphism groups
of free products of finite and cyclic groups}
\author{Rylee Alanza Lyman}
\address{Smith Hall Room 216, 101 Warren St Newark. NJ USA 07103}
\email{rylee.lyman@rutgers.edu}
\subjclass[2020]{Primary 20F65}
\begin{abstract}
  The main result of this paper is that the outer automorphism group
  of a free product of finite groups and cyclic groups
  is semistable at infinity (provided it is one ended)
  or semistable at each end.
  In a previous paper, we showed that the group of outer automorphisms
  of the free product of two nontrivial finite groups with an infinite cyclic group
  has infinitely many ends, despite being of virtual cohomological dimension two.
  We also prove that aside from this exception,
  having virtual cohomological dimension at least two implies
  the outer automorphism group of a free product of finite and cyclic groups
  is one ended. As a corollary, the outer automorphism groups
  of the free product of four finite groups or the free product of a single
  finite group with a free group of rank two 
  are virtual duality groups of dimension two,
  in contrast with the above example.
  Our proof is inspired by methods of Vogtmann,
  applied to a complex first studied in another guise by Krsti\'c and Vogtmann.
\end{abstract}
\maketitle
\paragraph{Duality groups.}
A group $G$ of type $FP_\infty$ is a \emph{duality group of dimension $n$}
in the sense of Bieri--Eckmann~\cite{BieriEckmann}
if there exists a $G$-module $D$
such that for any $G$-module $A$
and any integer $k$,
we have an isomorphism
\[
  H^k(G;A) \cong H_{n-k}(G;D\otimes_\mathbb{Z}A),
\]
where $G$ acts on $D\otimes_\mathbb{Z} A$ diagonally.
The dimension, $n$, turns out to be equal to the cohomological dimension of $G$,
which is thus torsion free,
and is therefore determined by $G$.
Similarly, the dualizing module $D$
turns out to be determined by $G$;
it is isomorphic to $H^n(G; \mathbb{Z}G)$.

Many important examples of groups are duality groups,
or more generally, \emph{virtual duality groups,}
having a duality group as a subgroup of finite index.
(Virtual duality groups are thus allowed to have torsion,
but they will be virtually torsion free.)
Among these are, of course,
groups having a closed manifold as a $K(\pi,1)$,
but also fundamental groups of knot complements,
mapping class groups of closed surfaces,
and, most relevant to the current paper,
$\out(F_n)$~\cite{BestvinaFeighnDuality}.

This paper is motivated by the following question.

\begin{question}\label{mainquestion}
  Let $F$ be a free product of finite and cyclic groups.
  Is the outer automorphism group of $F$ a virtual duality group?
\end{question}

The group $\out(F)$ is of type $FP_\infty$
and virtually torsion free, so the question makes sense.
As we showed, but did not remark, in~\cite{MyselfCAT0},
the answer to this question is sometimes ``no''.

We prove the following theorem.

\begin{THM}\label{maintheorem}
  Let $F = A_1* \cdots *A_n * F_k$
  be a free product of the nontrivial finite groups $A_i$ with a free group of rank $k$.
  The invariants $n$ and $k$ determine the number of ends of
  $\out(F)$ in the following way.
  \[
    \begin{cases}
      n \le 1,\ k\le 1\text{ or } (n,k) = (2,0)
      & \out(F) \text{ is finite, i.e.\ has zero ends.} \\
      (n,k) = (3,0),\ (2,1),\ (0,2)
      & \out(F) \text{ has infinitely many ends.} \\
      \text{otherwise} & \out(F) \text{ has one end.}
    \end{cases}
  \]
\end{THM}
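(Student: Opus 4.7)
The proof splits into three regimes matching the three cases of the statement, with the bulk of the work devoted to the ``one end'' regime.

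The ``zero ends'' cases, where the theorem predicts $\out(F)$ is finite, can be handled directly from a Fouxe-Rabinovitch style presentation: for $F$ finite, infinite cyclic, or of the form $A_1*\mathbb{Z}$ or $A_1*A_2$, the generators (factor automorphisms, symmetries, partial conjugations, and, when $k=1$, a transvection) reduce modulo inner automorphisms to a finite list. The ``infinitely many ends'' cases are either classical or already established. For $(n,k) = (3,0)$ the group $\out(F)$ has virtual cohomological dimension one and is neither finite nor virtually cyclic, hence is virtually free of positive rank; for $(n,k) = (0,2)$ we have $\out(F_2) \cong GL_2(\mathbb{Z})$, which is classically virtually free of positive rank; and the remaining case $(n,k) = (2,1)$ is precisely the example of \cite{MyselfCAT0} that motivated this paper.

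For the remaining values of $(n,k)$, the plan is to pass to an auxiliary complex. I would take $K$ to be the spine of the deformation space of cocompact minimal $F$-trees realizing the given free-product decomposition---equivalently, the complex of reduced marked graphs of groups studied in another guise by Krsti\'c and Vogtmann, as the abstract advertises. This complex is contractible and carries a cocompact $\out(F)$-action with finite vertex stabilizers, so $\out(F)$ is quasi-isometric to $K$ and has the same number of ends. It therefore suffices to show that $K$ has one end.

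To prove one-endedness of $K$, I would follow Vogtmann's blueprint for $\out(F_n)$. Choose an $\out(F)$-invariant exhaustion of $K$ by cocompact subcomplexes, obtained by bounding a suitable complexity of the underlying reduced graph of groups (for example, the number of non-free vertices, or a weighted combination of edges and vertex groups). Given a compact set $C \subset K$, the goal is to connect any pair of vertices outside $C$ by a path that stays outside $C$. The technical heart of this is a link argument: at every vertex of $K$ whose complexity exceeds a fixed threshold, the subcomplex of local moves (collapses, expansions, and Whitehead-type moves) that preserve the large-complexity condition is connected. I expect this to be the main obstacle, and the case analysis it requires is tight. The exceptional pairs $(3,0)$, $(2,1)$ and $(0,2)$ are precisely the ones in which the moves available at a reduced graph of groups are so rigid that the relevant links fall apart---which is exactly what allows infinitely many ends there---while outside these exceptional pairs the presence of an extra finite factor, an extra rank of free group, or the combination of both supplies enough new moves to keep each link connected.
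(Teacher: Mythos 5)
Your treatment of the zero-ends and infinitely-many-ends cases matches the paper and is fine. Your high-level plan for the one-ended regime — pass to the Krsti\'c--Vogtmann spine $L$, use the proper cocompact action with finite stabilizers to reduce to one-endedness of $L$, then push paths off a compact exhaustion — is also the paper's strategy. But there is a genuine gap in how you propose to build the exhaustion, and it would sink the argument.

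You suggest exhausting $L$ by sublevel sets of a ``complexity of the underlying reduced graph of groups,'' offering as examples the number of non-free vertices or a weighted count of edges and vertex groups. Any such quantity is an invariant of the unmarked graph of groups, hence constant on $\out(F)$-orbits of vertices. Since $L/\out(F)$ is a \emph{finite} complex, such a function takes only finitely many values on $L$ and is therefore bounded: its sublevel sets stabilize at all of $L$ after finitely many steps and give no exhaustion at all. (This is consistent with your phrase ``$\out(F)$-invariant exhaustion by cocompact subcomplexes'' — an invariant cocompact subcomplex of $L$ is never compact when $\out(F)$ is infinite, and the exhaustion you need for ends must be by \emph{compact} sets.) The key technical object missing from your sketch is the Krsti\'c--Vogtmann \emph{norm} $\|\tau\| = \sum_{w\in W}\ell_T(w)$, the sum of hyperbolic translation lengths of a fixed finite set $W$ of infinite-order conjugacy classes on the Bass--Serre tree of $\tau$. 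This depends essentially on the marking, is not orbit-invariant, and — for carefully chosen $W$ — is a proper function whose sublevel sets (``balls'') are compact. This is exactly what the paper imports from \cite{KrsticVogtmann}, and without it the rest of the argument has nothing to push paths away from.

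Once the right exhaustion is in place, the combinatorial heart is also somewhat different from your ``link connectivity'' sketch. The paper does not verify connectivity of a link directly; it works with \emph{standard paths} (sequences of one-edge Whitehead moves between reduced marked graphs of groups), and the key step (\Cref{mainconnectivitylemma}) is a reverse peak-reduction: given a local minimum $\tau$ along a standard path with neighbors $\tau^\alpha_e$ and $\tau^\beta_f$ of non-smaller norm, produce a replacement standard path from $\tau^\alpha_e$ to $\tau^\beta_f$ lying in the star of $\tau$ and passing only through briar patches of strictly larger norm. This is proved by a detailed analysis of the star graph, absolute values of ideal edges, and the construction of ``good polygons'' — a considerably more delicate business than showing a link is connected, and it is exactly here that the exceptional case $A_1*A_2*\mathbb{Z}$ has to be excluded by hand. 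Your intuition that the exceptional pairs are where the available moves are too rigid is correct, but the proposal as written does not identify the object whose sublevel sets exhaust $L$, and the ``complexity'' candidates you name cannot play that role.
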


\paragraph{Ends of groups.}
Let us explain.
Celebrated results of Freudenthal and Hopf
say that a finitely generated group $G$
has $0$, $1$, $2$, or infinitely many \emph{ends.}
An \emph{end} of a topological space $X$
is a compatible choice,
for every compact subset $K$ of $X$,
of a path component of the complement $X - K$.
Put another way,
if $K \subset K'$ are compact subsets of $X$,
every path component of $X - K'$ determines a path component of $X - K'$,
and in fact we have an inverse system of sets
(or spaces, if we'd like to give them the discrete topology).
The \emph{space of ends} of $X$,
is the inverse limit
\[
  \varprojlim_K \pi_0(X - K)
\]
and the \emph{number} of ends is its cardinality.
What Freudenthal and Hopf prove
is that the number of ends of a connected CW-complex, 
or if you'd like, a Cayley graph,
that a finitely generated group $G$ acts on
with finite stabilizers and finite (i.e.\ compact) quotient
is in fact an invariant of the group,
and that if a group has at least three ends, it has infinitely many.

One way to think about one-endedness, which is in some sense generic,
is to say that, supposing we have fixed an exhaustion of $X$
by compact sets $K_0 \subset K_1 \subset \cdots$
with the property that $K_n$ is contained in the interior of $K_{n+1}$,
for any two points ``sufficiently far from $K_n$'',
say belonging to $X - K_N$ for $N$ sufficiently larger than $n$,
those two points
can be connected by a path that avoids $K_n$.
Thus for example the line is not one ended
(in fact it has two ends),
while the plane is.

\paragraph{End invariants of groups.}
This reframes one-endedness as \emph{$0$-connectivity at infinity,}
and naturally leads one to generalize to questions about
higher homotopy or homology groups ``at infinity''.
The plane, then, would fail to be simply connected at infinity,
since a loop encircling our compact set $K_N$
is not nullhomotopic in $X - K_n$ as soon as $K_n$ is nonempty,
while $\mathbb{R}^n$ for larger $n$ \emph{would} be simply connected at infinity.

To ask these questions for groups,
one needs to be slightly careful.
After all, every finitely generated group has a Cayley graph,
and graphs have no higher homotopy groups.
In the case of the plane,
what is interesting is that every loop \emph{is} nullhomotopic
in the plane as a whole,
but may \emph{not} remain so after removing a compact set.
Thus we put restrictions on the spaces in question,
asking them to be $n$-connected in the ordinary sense,
before we ask whether they are $n$-connected at infinity.
This has the effect for groups of restricting our attention
to groups with certain \emph{finiteness properties,}
which generalize being finitely generated or finitely presented.

\paragraph{Semistability.}
Even with this restriction, there is a subtlety:
the fundamental group---for instance---needs a basepoint,
but as we work in the complements of a compact exhaustion
no single basepoint will do.
Making a choice of basepoint for $\pi_1(X-K_n, \star_n)$ for each $n$
and a path from $\star_n$ to $\star_{n-1}$,
we produce a \emph{proper ray} in $X$.
To show that resulting inverse limit 
\[
  \varprojlim_n \pi_1(X-K_n,\star_n)
\]
is independent of our choices,
we would need to show that any two proper rays
(ending in the same end if there is more than one)
are properly homotopic.
For locally finite complexes $X$,
so in particular for finitely presented groups,
this is equivalent by~\cite[Proposition 16.1.2]{Geoghegan} to the condition that
this inverse system of groups is \emph{semistable}
or satisfies the \emph{Mittag-Leffler condition:}
for each $n$ there exists $N$ such that for $k \ge N$,
the image of $\pi_1(X - K_N,\star_n)$ in $\pi_1(X - K_n,\star_n)$
is equal to the image of $\pi_1(X - K_k, \star_k)$ in $\pi_1(X- K_n,\star_n)$.

Unwinding this definition, we see that it is equivalent to the claim that
every loop in $X - K_N$ based at $\star_N$ 
may be pushed off to $X - K_k$ (along our proper ray)
by a homotopy that avoids $K_n$.

The relevance to group cohomology is that if
$G$ is \emph{semistable at each end} in the above sense, then $H^2(G,\mathbb{Z}G)$
is a free abelian group~\cite[Theorem 16.5.1]{Geoghegan}.
It turns out that if $G$ is $(n-2)$-connected at infinity,
(as the plane and hence $\mathbb{Z}^2$ is $0$-connected at infinity, for instance,
although merely being $(n-2)$-acyclic will do)
then $H^i(G, \mathbb{Z}G) = 0$ for $i < n$,
and $H^n(G, \mathbb{Z}G)$ is torsion free~\cite{Geoghegan,BradyMeier}.
(Thus for a one-ended group semistability is somewhere between
one-endedness and simple connectivity at infinity.)
A group $G$ of type $FP_\infty$
is a duality group of dimension $n$
if and only if $H^i(G, \mathbb{Z}G) = 0$
for $i \ne n$
and $H^n(G,\mathbb{Z}G)$
is nontrivial and torsion free.
If additionally $G$ has cohomological dimension $n$,
then $H^i(G, \mathbb{Z}G) = 0$ for $i > n$
and $H^n(G, \mathbb{Z}G)$ is nontrivial.

Thus the combination of $(n-2)$-connectivity at infinity
with having virtual cohomological dimension $n$
implies that a group $G$ of type $FP_\infty$ is a virtual duality group.

Additionally, let us remark that it is an open question
whether every finitely presented group $G$
satisfies that $H^2(G,\mathbb{Z}G)$ is free abelian,
hence whether every finitely presented group is semistable at each end.
We are able to answer this in the affirmative for $\out(F)$
when $F$ is a free product of finite and cyclic groups.
The following is our main result.

\begin{THM}\label{semistability}
  Let $F = A_1 * \cdots * A_n * F_k$ be a free product of the finite groups
  $A_i$ with a free group of rank $k$.
  The group $\out(F)$ of outer automorphisms of $F$
  is semistable at each end.
\end{THM}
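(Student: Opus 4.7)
I would work on the spine $K$ of the Krsti\'c--Vogtmann complex, a contractible simplicial complex of marked $F$-graphs on which $\out(F)$ acts with finite stabilizers and cocompact quotient. Because $K$ serves as a cocompact model for the classifying space for proper actions of $\out(F)$, both the number of ends and semistability can be read off from $K$ rather than from a Cayley graph of the group. Fix an exhaustion $K_0 \subset K_1 \subset \cdots$ of $K$ by cocompact, $\out(F)$-invariant subcomplexes---concretely, the preimages of a finite exhaustion of $\out(F)\doublebackslash K$.

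For an end $e$ of $K$, fix a proper ray $r$ converging to $e$. A convenient choice is $r(i) = \phi^i \cdot x_0$ for a basepoint $x_0$ and an outer automorphism $\phi$ of sufficiently growing type (e.g., an exponentially growing automorphism), so that $\phi^i\cdot x_0$ genuinely escapes every compact set. Semistability at $e$ then reduces to the assertion that, given $n$, there exists $N \ge n$ such that every loop $\ell$ in $K\setminus K_N$ based along $r$ is homotopic, within $K \setminus K_n$, to a loop based further out along $r$.

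The natural construction is to push $\ell$ forward by $\phi$. Connect each vertex $v$ of $\ell$ to $\phi(v)$ by a canonical edge path $\alpha_v$, obtained by interpolating the simplicial action of $\phi$ through a sequence of elementary Nielsen-type moves in $K$; then fill each parallelogram $\alpha_v \cdot \phi(e) \cdot \alpha_w^{-1} \cdot e^{-1}$ associated to an edge $e = (v,w)$ of $\ell$ by a simplicial disk in $K$. Iterating produces a homotopy from $\ell$ to $\phi^k(\ell)$, which lies arbitrarily deep along $r$. The entire argument hinges on the requirement that each filling disk avoid $K_n$, which in turn reduces to a connectedness statement for the ``link at infinity'' of every simplex in $K$ relative to a Morse-type complexity function measuring depth.

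The principal obstacle is establishing this local connectivity. For $\out(F_n)$ it is the heart of the Morse-theoretic arguments Vogtmann uses on the spine of outer space, and the main work here is to adapt those arguments to the free-product setting, where one must account for finite vertex stabilizers, nontrivial edge invariants, and the full list of Nielsen generators available in the Krsti\'c--Vogtmann framework. In the exceptional configurations $(n,k) \in \{(3,0),(2,1),(0,2)\}$ the complex $K$ has unusually simple geometry and $\out(F)$ has infinitely many ends; these cases I would handle separately by exhibiting an explicit graph-of-groups splitting of $\out(F)$ whose vertex groups are semistable (e.g., finite or virtual surface groups), from which semistability at each end follows by standard combination results.
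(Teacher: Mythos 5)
Your plan has a structural problem with the exhaustion that prevents the $\phi$-pushing idea from getting off the ground, and the one step you explicitly flag as "the principal obstacle" is in fact the entire content of the theorem.

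On the exhaustion: you propose to take $K_0 \subset K_1 \subset \cdots$ to be \emph{$\out(F)$-invariant} cocompact subcomplexes, so that translation by $\phi$ interacts nicely with them. But semistability is phrased in terms of an exhaustion by \emph{compact} sets; since $\out(F)$ is infinite, no nontrivial $\out(F)$-invariant subcomplex is compact, and since $\out(F)\doublebackslash K$ is already a finite complex, there is no interesting "finite exhaustion" of the quotient to pull back. Conversely, if you use an honest compact exhaustion, the translation $x \mapsto \phi(x)$ does not preserve the filtration, and there is no reason a disk filling the parallelogram $\alpha_v \cdot \phi(e) \cdot \alpha_w^{-1} \cdot e^{-1}$ should avoid $K_n$: the point $\phi(v)$ can land back inside $K_n$ even when $v$ lies far outside it. The paper avoids this by building the exhaustion from the Krsti\'c--Vogtmann norm $\|\tau\| = \sum_{w\in W} \ell_\tau(w)$: the ball $B_k$ consists of the stars of reduced marked graphs of groups with norm at most $k$. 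This filtration is \emph{not} $\out(F)$-invariant, and the homotopies are built not by translating by group elements but by local surgery in stars of briar patches: a local minimum of the norm along a standard path is replaced by a new standard path of strictly greater norm (Lemma~2.3 and the good-polygon analysis), and this replacement happens inside the star of the minimum, so norm control is automatic.

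On the deferred step: you write that the argument hinges on a "connectedness statement for the link at infinity" adapted from Vogtmann, and you defer it. That deferred content is exactly Lemmas~2.6--2.13 of the paper, which classify ideal edges at a vertex, show that any size-$\geq 3$ ideal edge is compatible with a non-reductive size-two one, and construct explicit good polygons (triangle, rectangle, hexagon) in the star of a briar patch connecting any two norm-increasing size-two Whitehead moves. Without a proof of this combinatorial statement the argument is a restatement of the goal rather than a proof. (In addition to this, the paper needs Proposition~3.2, that every path outside $B_k$ is homotopic outside $B_k$ to a \emph{standard} path, proved via a matroid exchange argument on maximal forests, in order to even put itself in a position where the good-polygon lemmas apply.)

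On the exceptional cases: your instinct to split off $(n,k)\in\{(3,0),(0,2),(2,1)\}$ and argue algebraically is correct in outline, but the details don't match. In the first two cases $\dim L = 1$ and $\out(F)$ is virtually free, so semistability is immediate. For $F = A_1 * A_2 * \mathbb{Z}$ the paper does exhibit an amalgamated product decomposition of a finite-index subgroup $G \le \out(F)$, but the vertex groups are a finite group and a group virtually isomorphic to $(A_1*A_2)\times(A_1*A_2)$; they are not virtual surface groups. The paper then observes that $G$ is commensurable to $(F_m\times F_m)*(F_m\times F_m)$, a right-angled Artin group, and cites Mihalik's theorem that RAAGs are semistable. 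You would need either that specific structure or a correct invocation of a Mihalik--Tschantz-type combination theorem with the correct vertex groups; "virtual surface groups" and "standard combination results" glosses over the actual algebra required.
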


\begin{COR}
  For $F$ as above, writing $G = \out(F)$,
  we have that $H^2(G,\mathbb{Z}G)$ is free abelian.
\end{COR}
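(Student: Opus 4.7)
The plan is to obtain this corollary as an immediate consequence of Theorem~\ref{semistability} together with a standard result in the theory of ends of groups, namely Geoghegan's Theorem 16.5.1 in \cite{Geoghegan}, which is already flagged in the introduction: a finitely presented group $G$ that is semistable at each end has $H^2(G;\mathbb{Z}G)$ free abelian. So the strategy has essentially two steps.

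First, I would invoke Theorem~\ref{semistability} to conclude that $G = \out(F)$ is semistable at each end. This is precisely the content that Theorem~\ref{semistability} provides, and no additional work is needed on the semistability side. Second, I would verify the finiteness hypothesis required by Geoghegan's theorem: $\out(F)$ is finitely presented. This is part of the fact, recorded in the paragraph preceding Question~\ref{mainquestion}, that $\out(F)$ is of type $FP_\infty$ and virtually torsion free; finite presentability is in turn witnessed concretely by the cocompact action of $\out(F)$ (or a finite-index subgroup) on a spine of the deformation space that underlies the proof of Theorem~\ref{semistability}. Having secured both hypotheses, applying \cite[Theorem 16.5.1]{Geoghegan} yields that $H^2(G;\mathbb{Z}G)$ is free abelian, which is the desired conclusion.

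Because the work really does live in Theorem~\ref{semistability}, there is essentially no obstacle at the level of the corollary; the only thing that has to be done carefully is to match the definitions of ``semistable at each end'' used in this paper and in \cite{Geoghegan} (both amount to the Mittag--Leffler condition on the pro-group $\pi_1^e$ at each end of a cocompact, free, properly discontinuous $G$-complex), and to check that this property is independent of the chosen model --- which is precisely \cite[Proposition 16.1.2]{Geoghegan} cited earlier in the excerpt. If anything deserves the label ``main obstacle,'' it is merely this bookkeeping check; the substantive content is entirely contained in the proof of Theorem~\ref{semistability}.
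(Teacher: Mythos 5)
Your proposal is correct and follows exactly the route the paper intends: the corollary is an immediate consequence of Theorem~\ref{semistability} combined with \cite[Theorem 16.5.1]{Geoghegan}, with finite presentability of $\out(F)$ supplied by its cocompact action on the spine $L$. The paper gives no separate argument for the corollary because, as you observe, all the substance is in Theorem~\ref{semistability}.
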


\paragraph{The case of $\out(F)$.}
From here we discuss $\out(F)$,
where $F = A_1 * \cdots * A_n * F_k$
is the free product of the finite groups $A_i$
with a free group of rank $k$,
and $\out(F)$ denotes its \emph{outer automorphism group,}
$\aut(F) / \inn(F)$.
The virtual cohomological dimension of $\out(F)$
was computed by Krsti\'c and Vogtmann~\cite{KrsticVogtmann};
it is equal to the dimension of a certain contractible simplicial complex $L$
that $\out(F)$ acts on with finite stabilizers and finite quotient.
We will call this complex the \emph{spine of (reduced) Outer Space} for $F$.
The groups $\out(F)$,
which are virtually torsion free,
are thus virtually of type $F$,
hence certainly of type $FP_\infty$.
In any case, the dimension of $L$ can be computed from the invariants $n$ and $k$
by the following rule
\[
  \dim(L) =
  \begin{cases}
    2k + n - 2 & n \ge 2, \\
    \max\{2k + n - 3, 0\} & \text{otherwise}.
  \end{cases}
\]
Thus, to prove that $\out(F)$ is a virtual duality group,
it suffices to show that it is $(2k + n - 4)$-connected at infinity
if $n \ge 2$ or $(2k-n-5)$-connected at infinity otherwise.
In particular, when $L$ has dimension $2$,
we need only show that $\out(F)$ is one ended.
There are three such cases:
$F = A_1 * A_2 * A_3 * A_4$,
$F = A_1 * A_2 * \mathbb{Z}$
and $F = A_1 * F_2$.
In the first case, one-endedness
follows from a result of Das~\cite{Das},
who shows that $\out(F)$ is \emph{thick} when $F = A_1 * \cdots * A_n$
and $n \ge 4$.
The third case is covered by \Cref{maintheorem},
but in the second,
\Cref{maintheorem}
states that $\out(F)$ has infinitely many ends:
this is part of the main result of~\cite{MyselfCAT0}.
This is why the answer to \Cref{mainquestion} is ``no'' for some choices of $F$.
We view \Cref{maintheorem} as partial progress 
towards a full answer to \Cref{mainquestion}.

In view of \Cref{semistability}
and the fact that $\out(A_1 * A_2 * A_3 * A_4)$
and $\out(A_1 * F_2)$ are one-ended groups of virtual cohomological dimension two,
these groups have a well-defined \emph{fundamental group at infinity,}
namely the inverse limit of the fundamental groups as discussed
in the semistability paragraph above.
Because $H^2(G,\mathbb{Z}G)$ is nontrivial,
this fundamental group is nontrivial.
What is it?

\begin{question}
  What is the fundamental group of $\out(A_1 * A_2 * A_3 * A_4)$
  or $\out(A_1 * F_2)$ at infinity?
\end{question}

Our proof of \Cref{maintheorem}
is inspired by Vogtmann's paper~\cite{Vogtmann}.
There she analyzes the combinatorial structure
of links of certain vertices of the Spine of Outer Space (for $F = F_k$)
to push paths and homotopies off to infinity.
The Spine of Outer Space for general $F$
likewise has a rich combinatorial structure;
the present paper is a first contribution towards an understanding of it in general.

Like the papers~\cite{CullerVogtmann,KrsticVogtmann},
we proceed by putting a kind of ``height function''
on certain vertices of the complex $L$
of minimal complexity
we term ``briar patches''.
(A briar patch may have petals, like a rose,
and thorns like a thistle, but may also sprawl somewhat.)
Vertices of $L$ correspond to marked graphs of groups
(or if you'd rather, actions of $F$ on trees),
and the height function records the lengths of immersed loops 
associated to a predetermined finite set of conjugacy classes
(or if you'd rather, the hyperbolic translation lengths of these conjugacy classes).

The proof of contractibility of $L$ in~\cite{CullerVogtmann,KrsticVogtmann}
goes by showing, using ``peak-reduction'' techniques,
that these height functions behave somewhat like
non-singular Morse functions,
allowing one to contract the whole complex
onto the subcomplex where the function is minimized,
and then arguing that by carefully choosing the finite set of conjugacy classes,
this minimal subcomplex is manifestly contractible.

Like Vogtmann in~\cite{Vogtmann},
we'd like to do the opposite:
The height function provides an exhaustion of $L$;
for well-chosen finite sets, this is a compact exhaustion,
defining for us a sequence of compact subsets of $L$ we call ``balls''.
Analyzing the combinatorics of the spine
allows us to understand how our height function changes
as we move around the star of a single vertex of $L$.

Firstly, we restrict our attention to a family of ``standard paths'',
which look like a sequence of one-edge expand--collapse
moves (the term is ``Whitehead move'') between briar patches.
In the case $F = F_k$,
these moves between roses have the effect
of applying Whitehead automorphisms to the fundamental group, hence the name.
Supposing that $\tau_{2k}$ is a briar patch along such a path which is a kind of
local minimum for the height function,
the game is to find a new standard path
between the adjacent briar patches $\tau_{2(k-1)}$ and $\tau_{2(k+1)}$
that passes through briar patches of height strictly greater than that of $\tau$.
By doing so while staying within the star of $\tau$,
we have a homotopy between the old path and the new one,
and by iteratively applying this method to all local minima,
we are able to push our path off to infinity.
It turns out that when this strategy of proof works,
it suffices to prove semistability as well;
we turn to this in \Cref{semistabilitysection}.

Let us remark that our proof of \Cref{maintheorem}
is tailored to its application in \Cref{semistability},
for which we really need to be able to push paths off to infinity.
For a reader interested in an ``optimal'' proof of \Cref{maintheorem},
we give a sketch of a simpler argument suggested to us by the anonymous referee
in an appendix.

\paragraph{Acknowledgments.}
The author is pleased to thank Kim Ruane for suggesting
that \Cref{semistability} might be in reach given \Cref{maintheorem},
the anonymous referee for the argument sketched in the appendix,
and Mike Mihalik for helpful comments on a preliminary version.
This material is based on work supported by the National Science Foundation
under Award No. DMS-2202942.
\section{The Complex \texorpdfstring{$L$}{L}}\label{introsection}
We work in the \emph{spine of reduced Outer Space for $F$,}
a simplicial complex which we denote as $L = L(F)$.
A more detailed definition is given in~\cite{MyselfCAT0}.
A vertex of $L$ corresponds to an action of $F$ on a simplicial tree $T$ with finite stabilizers
(and in fact trivial edge stabilizers).
In this paper it will be more convenient 
to work in the quotient graph of groups $\mathcal{G} = F\doublebackslash T$.
We assume a certain amount of familiarity with graphs of groups in this paper
and refer the reader to~\cite{Trees},~\cite{Bass},~\cite{ScottWall} 
and~\cite[Section 1]{MyTrainTracks}
for background on graphs of groups.
We will follow the notation in~\cite{MyTrainTracks}, 
although almost everything we write is standard.
The main innovation of~\cite{MyTrainTracks}
that we need is the notion of a \emph{homotopy equivalence}
between graphs of groups
and homotopy of maps of graphs of groups.
For us the precise definition is less important than the following consequences:
if $\sigma\colon \mathbb{G} \to \mathcal{G}$ is a homotopy equivalence,
it determines (up to inner automorphism) an isomorphism
$\sigma_\sharp\colon \pi_1(\mathbb{G}) \cong \pi_1(\mathcal{G})$
and under this isomorphism the corresponding Bass--Serre trees $S$ and $T$
have the same subgroups which are \emph{elliptic,} that is, fixing a point of the tree.
For a precise definition, see~\cite{MyTrainTracks}.

Fix a finite graph of finite groups $\mathbb{G}$,
a basepoint $\star \in \mathbb{G}$
and an identification $F\cong \pi_1(\mathbb{G},\star)$.
A \emph{marked graph of groups} $\tau = (\mathcal{G},\sigma)$ 
is a graph of finite groups $\mathcal{G}$ 
together with a homotopy equivalence $\sigma\colon \mathbb{G} \to \mathcal{G}$
called the \emph{marking.}
Two marked graphs of groups $(\mathcal{G},\sigma)$ and $(\mathcal{G}',\sigma')$
are \emph{equivalent} if there is an isomorphism $h\colon \mathcal{G} \to \mathcal{G}'$ 
such that the following diagram commutes up to homotopy
\[  \begin{tikzcd}[row sep=tiny]
    & \mathcal{G} \ar[dd,"h"] \\
    \mathbb{G} \ar[ur, "\sigma"] \ar[dr, "\sigma'"'] & \\
    & \mathcal{G}'.
\end{tikzcd} \]

\paragraph{The complex $L$.}
A vertex of the complex $L$ is determined by
an equivalence class of marked graphs of groups $(\mathcal{G},\sigma)$
satisfying the following conditions.
\begin{enumerate}
    \item Edge groups of $\mathcal{G}$ are trivial.
    \item Each valence-one and valence-two vertex of $\mathcal{G}$ 
      has nontrivial vertex group.
    \item If an (open) edge $e$ separates $\mathcal{G}$,
      each component of the complement 
      contains a vertex with nontrivial vertex group.
\end{enumerate}
A marked graph of groups $\mathcal{G}$ determining a vertex of $L$,
is \emph{reduced} if collapsing any edge of $\mathcal{G}$
yields a marked graph of groups not homotopy equivalent to $\mathcal{G}$
(because the collapse map cannot be ``undone'',
for example because maybe 
the resulting graph of groups has an infinite vertex group).
The third condition above
is equivalent to asking that for each edge $e$ of $\mathcal{G}$,
there is a reduced marked graph of groups $\mathcal{G}'$
which may be obtained from $\mathcal{G}$ by collapsing edges
in which the edge $e$ is not collapsed.
In the language of~\cite{Clay,GuirardelLevittDeformation},
we say the edge $e$ is \emph{surviving.}

Two vertices $(\mathcal{G},\sigma)$ and $(\mathcal{G}',\sigma')$
are connected by an oriented edge 
if a marked graph of groups equivalent to $(\mathcal{G}',\sigma')$
can be obtained from $(\mathcal{G},\sigma)$ 
by collapsing certain edges in $\mathcal{G}$,
and in general $L$ possesses a simplex whenever its $1$-skeleton is present.
As observed in~\cite[Theorem 3.1]{Myself},
the main result of Krsti\'c--Vogtmann~\cite{KrsticVogtmann}
implies that $L$ is contractible.
The dimension of $L$ is $2k + n - 2$ 
when $n \ge 2$ and $2k + n - 3$ when $n \le 1$ and $k > 1$.

We assume, in view of~\cite{Vogtmann}, throughout this paper that $n \ge 1$.
The \emph{edge number} of $F$ is the quantity $\en(F) = 2k + n - 1$.
If $\tau = (\mathcal{G},\sigma)$ is a reduced marked graph of groups
with exactly one vertex $v$ of valence at least two, 
then the edge number of $F$ is the valence of $v$.

\paragraph{The norm $\|\cdot\|$.}
Given a finite set $W$ of infinite order elements of $F$
and a reduced marked graph of groups $\tau = (\mathcal{G},\sigma)$,
following Krsti\'c and Vogtmann~\cite{KrsticVogtmann}
we define a norm on $\tau$ as
\[  \|\tau\| = \sum_{w \in W} \ell(w), \]
where $\ell(w)$ is the hyperbolic translation length of $w$ on the Bass--Serre tree $T$ of $\mathcal{G}$.
The \emph{star} of a reduced marked graph of groups $\tau$
is the subcomplex of $L$ spanned by those marked graphs of groups $\tau'$
which collapse onto $\tau$.
The \emph{ball of radius $r$}
is the union of the stars of reduced marked graphs of groups $\tau$
with norm at most $r$.
It is contractible (this is the main technical result of~\cite{KrsticVogtmann}).
Krsti\'c--Vogtmann prove~\cite[Proof of Proposition 6.3]{KrsticVogtmann}
that for appropriate choice of $W$,
balls of radius $r$ are compact.
We therefore choose $W$ such that balls of radius $r$ are compact.
This gives us a compact exhaustion of $L$.

\paragraph{Directions and turns.} 
Let $v$ be a vertex of a marked graph of groups $(\mathcal{G},\sigma)$.
We write $\st(v)$ for the set of oriented edges with initial vertex $v$.
The set of \emph{directions} at $v$ is 
\[  D_v = \coprod_{e\in\st(v)} \mathcal{G}_v \times \{e\}.  \]
(Recall that marked graphs of groups have trivial edge groups in this paper.)
There is an obvious action of $\mathcal{G}_v$ on $D_v$;
each orbit is an oriented edge $e \in \st(v)$.
A \emph{turn} is the $\mathcal{G}_v$-orbit of a pair $\{(g_1,e_1),(g_2,e_2)\}$ of directions in $D_v$.
A turn is \emph{degenerate} if it is represented by a pair of identical directions
and \emph{nondegenerate} otherwise.

\paragraph{The star graph.}
Let $\tau = (\mathcal{G},\sigma)$ be a marked graph of groups.
For each $w \in W$ thought of as an element of the fundamental group $\pi_1(\mathbb{G},\star)$,
recall that the homotopy equivalence $\sigma\colon \mathbb{G} \to \mathcal{G}$
induces (after choosing basepoints)
$\sigma_\sharp\colon \pi_!(\mathbb{G}) \to \pi_1(\mathcal{G})$.
Represent the conjugacy class of $\sigma_\sharp(w)$ by a graph-of-groups \emph{edge path}
\[  \gamma_w = g_0e_1g_1\ldots e_kg_k   \]
which is \emph{cyclically reduced} in the sense that 
$e\bar e$ is not a subpath of any cyclic reordering of $\gamma_w$
for any oriented edge $e$ of $\mathcal{G}$.
We say the path $\gamma_w$ \emph{takes} the turns
$[\{(1,\bar e_i),(g_i,e_{i+1})\}]$ for $1 \le i \le k-1$
and $[\{(g_k^{-1},\bar e_k),(g_0,e_1)\}]$.
Since $\gamma_w$ is cyclically reduced, these turns are nondegenerate.
The \emph{star graph of $\tau$} with respect to $W$
has as vertex set the disjoint union of $D_v$ as $v$ varies over the vertices of $\mathcal{G}$
and an edge connecting a pair of directions (necessarily based at the same vertex $v$)
if the turn they determine is taken by some $\gamma_w$.
Thus each turn of $\gamma_w$ based at $v$ corresponds to $|\mathcal{G}_v|$ edges of the star graph.
Since turns taken by $\gamma_w$ are nondegenerate,
the star graph has no loop edges.
If $\tau$ is reduced, then $\|\tau\|$ may be computed from the star graph as
\[  \|\tau\| = \frac{1}{2} \sum_{v\in \mathcal{G}} 
\sum_{d \in D_v} \frac{\operatorname{valence}(d)}{|\mathcal{G}_v|}.   \]

\paragraph{Ideal edges.}
An \emph{ideal edge} based at $v$ in $\mathcal{G}$
is a subset $\alpha$ of $D_v$ with the following properties.
\begin{enumerate}
    \item The sets $\alpha$ and $D_v - \alpha$ have at least two elements.
    \item The set $\alpha$ contains at most one element of each $\mathcal{G}_v$-orbit of directions.
    \item There is a direction $(g,e) \in \alpha$ with the property
        that no direction with underlying edge $\bar e$ belongs to $\alpha$.
        (We may have that $\bar e \notin \st(v)$.)
\end{enumerate}
Write $D(\alpha)$ for the set of directions satisfying item 3
of the above definition.
We are mainly interested in the case where $\tau$ is reduced
and $F = A_1*\cdots*A_n*F_k$ where $n \ge 1$.
In this situation, every vertex of $\mathcal{G}$ has nontrivial vertex group,
so item 2 implies that $D_v - \alpha$ has at least two elements if $\alpha$ has at least two elements.
There is an ideal edge based at $v$ if and only if $v$ has valence at least two,
in which case we say that $v$ is \emph{active.}
The action of $\mathcal{G}_v$ on $D_v$ descends to an action on the set of ideal edges based at $v$,
and we say $\alpha$ and $\alpha'$ are \emph{equivalent} if they belong to the same $\mathcal{G}_v$-orbit.
An ideal edge $\alpha$ is \emph{contained in} an ideal edge $\beta$
if both are based at $v$
and $\alpha$ is equivalent to a subset of $\beta$.
We say $\alpha$ and $\beta$ are \emph{disjoint} 
if they are based at different vertices or if they are based at a vertex $v$
and their $\mathcal{G}_v$-orbits are disjoint.
We say $\alpha$ and $\beta$ are \emph{compatible} if one is contained in the other or they are disjoint.

In the case where $\mathcal{G}_v$ is trivial,
the set $D_v - \alpha$ is also an ideal edge,
and we say $\alpha$ and $D_v - \alpha$ are equivalent,
but compatibility remains the same.

\paragraph{Blowing up ideal edges.}
Given an ideal edge $\alpha$ in a marked graph of groups $\tau$,
we describe a new marked graph of groups $\tau^\alpha = (\mathcal{G}^\alpha,\sigma^\alpha)$
obtained from $\tau = (\mathcal{G},\sigma)$ by \emph{blowing up} the ideal edge $\alpha$.
The graph $\mathcal{G}^\alpha$ is the same as the graph $\mathcal{G}$
with an additional vertex $v_\alpha$ and an additional edge $\alpha$.
The oriented edge $\alpha$ has initial vertex $v$ and terminal vertex $v_\alpha$.
An oriented edge $e$ incident to $v$ in $\mathcal{G}$ is now incident to $v_\alpha$
if there is a direction in the $\mathcal{G}_v$-orbit $e$ in $\alpha$;
all other oriented edges remain incident to the vertices they were incident to in $\mathcal{G}$.
All edge groups remain trivial,
all vertex groups remain what they were in $\mathcal{G}$, and the new vertex has trivial vertex group.
There is a collapse map $\mathcal{G}^\alpha \to \mathcal{G}$
which collapses the edge $\alpha$ and sends an edge $e$ incident to $v_\alpha$
to the edge path $ge$, where $(g,e) \in \alpha$.
This collapse map is a homotopy equivalence;
choose a homotopy inverse $f\colon \mathcal{G} \to \mathcal{G}^\alpha$
and define $\sigma^\alpha = f\sigma$.

If $\tau$ is reduced, 
the marked graph of groups $\tau^\alpha$ represents a vertex of $L$:
collapsing any edge $e$ in $D(\alpha)$ yields a reduced marked graph of groups $\tau^{\alpha}_e$
in which the edge $\alpha$ is not collapsed.
When $\tau$ is reduced,
we say that $\tau^\alpha_e$ is obtained from $\tau$ by the \emph{elementary Whitehead move} $(\alpha,e)$.
When it is convenient, we will sometimes write $(\alpha,d)$ where $d \in D(\alpha)$ is a direction,
rather than an edge.
The vertices $\tau$ and $\tau^\alpha_e$ of $L$ 
are connected by an edge path of length two in $L$
called an \emph{elementary Whitehead path.}
Following~\cite{Vogtmann},
we denote paths in $L$ by listing their vertices;
so the elementary Whitehead path 
from $\tau$ to $\tau^\alpha_e$ is $(\tau,\tau^\alpha,\tau^\alpha_e)$.

If $\alpha$ and $\alpha'$ are equivalent, say $\alpha' = g\alpha$,
then $\tau^\alpha$ and $\tau^{\alpha'}$ are equivalent:
one isomorphism $h\colon \mathcal{G}^\alpha \to \mathcal{G}^{\alpha'}$
is the identity on the common edges and vertices (and vertex groups) 
of $\mathcal{G}^\alpha$ and $\mathcal{G}^{\alpha'}$,
sends $v_\alpha$ to $v_\alpha'$ and $\alpha$ to the edge path $g\alpha'$.

If $\alpha$ is contained in the ideal edge $\beta$, 
then $\alpha$ corresponds to an ideal edge $\alpha$ of $\tau^\beta$
based at $v_\beta$.
If $\alpha$ is based at $v$ and $\beta$ is based at $w$ and they are disjoint,
then $\alpha$ corresponds to an ideal edge $\alpha$ of $\tau^\beta$ based at $v$
and vice versa.
In this latter situation we have ${(\tau^{\alpha})}^\beta = {(\tau^\beta)}^\alpha$.

An \emph{ideal forest} $\Phi = \{\alpha_1,\ldots,\alpha_I\}$ 
in a reduced marked graph of groups $\tau$
is a set of pairwise compatible ideal edges containing at most one element of each equivalence class.
By repeatedly blowing up ideal edges in $\Phi$ 
that are maximal with respect to inclusion,
we obtain a marked graph of groups
$\tau^{\alpha_1,\ldots,\alpha_I}$.
The edges $\{\alpha_1,\ldots,\alpha_I\}$ are a \emph{forest} 
in the sense of~\cite{MyTrainTracks},
and collapsing them recovers $\tau$.

\paragraph{The absolute value of an ideal edge.}
Following Culler and Vogtman~\cite{CullerVogtmann},
define the \emph{dot product} of two subsets $S$ and $T$ 
of the set of vertices of a graph
to be the number of (unoriented) edges with one vertex in $S$ and the other in $T$.
The \emph{absolute value} of $S$ is the dot product of $S$ with its complement.
An ideal edge corresponds to a subset of the set of vertices of the star graph,
so we may compute $|\alpha|$.
If $\alpha' = g.\alpha$, then we have $|\alpha| = |\alpha'|$.
A direction $d$ likewise has an absolute value $|d| = |\{d\}|$.
If $d' = g.d$ for some $g \in \mathcal{G}_v$, then $|d'| = |d|$.
Put another way, the absolute value of a direction 
is actually a property of the underlying oriented edge $e$,
and we will sometimes write $|e|$ for $|d|$.

It follows from~\cite[Lemma 4.7 and Propositions 6.4, 6.5]{KrsticVogtmann}
that if $\Phi = \{\alpha_1,\ldots,\alpha_I\}$ is an ideal forest in a reduced
marked graph of groups $\tau$
and $\{e_1,\ldots,e_I\}$ is a collection of edges in $\tau$
which form a collapsible forest in $\tau^{\alpha_1,\ldots,\alpha_I}$,
after possibly reordering and reversing orientation of the $e_i$,
we have directions $d_i \in D(\alpha_i)$ 
with underlying oriented edges $e_i$ such that
collapsing the edges $e_i$ 
yields a marked graph of groups $\tau^{\alpha_1,\ldots,\alpha_I}_{e_1,\ldots,e_I}$
such that
\[  \|\tau^{\alpha_1,\ldots,\alpha_I}_{e_1,\ldots,e_I}\| 
= \|\tau\| + \sum_{i=1}^I |\alpha_i| - \sum_{i=1}^I |e_i|.
\tag{$\star$}\label{eqnstar} \]

A Whitehead move $(\alpha,e)$ is \emph{reductive}
if $\|\tau^\alpha_e\| \le \|\tau\|$
and \emph{strictly reductive} if the inequality is strict.
By \Cref{eqnstar},
reductivity is equivalent to the condition that $|\alpha| \le |e|$.
An ideal edge $\alpha$ is reductive 
if every Whitehead move supported by that edge is reductive.
The proof of contractibility in~\cite{CullerVogtmann,KrsticVogtmann}
goes by finding reductive ideal edges 
compatible with a given collection of ideal edges.
In our case, we want to \emph{avoid} reductive ideal edges if possible.

If we have $F = A_1*\cdots*A_n*F_k$,
let $s_1,\ldots,s_k$ be a free basis for $F_k$,
and suppose $W$ contains the elements
\[  \{a_i a_j : a_i \in A_i - \{1\},\ a_j \in A_j - \{1\},\ i < j\} \]
assuming $n \ge 2$ along with the elements
\[
    \{s_i a_j,\ a_j s_i : a_j \in A_j\} \quad\text{and}\quad
    \{s_i s_j,\ s_i s_j^{-1} : i \ne j \}.
\]
Note that $W$ therefore contains the $s_i$.
Throughout this paper, we assume that $\out(F)$ is infinite,
i.e.~that $F \ne F_1$, $A_1$, $A_1 * F_1$ or $A_1 * A_2$.

\begin{lem}[cf.~Lemma 2.2 of~\cite{Vogtmann}]\label{absolutenonzero}
    For $W$ as above, the absolute value of each edge and each ideal edge
    in a reduced marked graph of groups $\tau$ is nonzero,
    and the star graph of $\tau$ with respect to $W$
    has exactly one connected component for each vertex of $\tau$.
    Moreover, if $n \ge 2$,
    for each vertex $v$, there exists an oriented edge $e$ with initial vertex $v$
    such that there exists an edge in the star graph between every pair of directions in $D_v$
    with the same underlying edge $e$.
\end{lem}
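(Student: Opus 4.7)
My plan is to establish the three assertions in the order: nonvanishing of $|e|$ for ordinary edges, then the special-edge clause of the third assertion for $n \ge 2$, then nonvanishing of $|\alpha|$ for ideal edges by reduction to the first, and finally connectivity of the star graph. I expect the last step to be the main obstacle.

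For an ordinary edge $e$ of a reduced $\tau = (\mathcal{G}, \sigma)$ I split on whether $e$ separates $\mathcal{G}$. When $e$ is separating, reducedness condition (3) supplies nontrivial vertex groups $A_i$ and $A_j$ with $i \ne j$ on the two sides of $\mathcal{G} - e$, and for any nontrivial $a_i \in A_i$ and $a_j \in A_j$ the cyclically reduced representative of $a_ia_j \in W$ must cross $e$. When $e$ is non-separating, collapsing all other edges realizes $F$ as $F_1 * \mathbb{Z}$ with stable letter $t$, and the projection $\phi \colon F \to \mathbb{Z}$ killing $F_1$ vanishes on each finite vertex group automatically; if $|e| = 0$ then every $w \in W$ is elliptic in the associated Bass--Serre tree, hence conjugate into $F_1$, so $\phi(w) = 0$. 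Since $s_i \in W$ for each $i$, the homomorphism $\phi$ vanishes on a generating set of $F$, contradicting $\phi(t) = 1$.

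For the special-edge clause at a vertex $v$ with vertex group $A_i$, I choose a distinct $A_j$-vertex $v'$ (which exists since $n \ge 2$) and a non-backtracking edge path $p$ from $v$ to $v'$ in $\mathcal{G}$ with first edge $e$. For any nontrivial $a_i \in A_i$ and $a_j \in A_j$ the conjugacy class of $a_ia_j$ has $a_i \cdot p \cdot a_j \cdot \bar p$ as a cyclically reduced representative, which takes the U-turn $\{(1, e), (a_i, e)\}$ at $v$; varying $a_i$ over $A_i-\{1\}$ and applying the $\mathcal{G}_v$-action, every pair of distinct directions with underlying edge $e$ becomes connected. For an ideal edge $\alpha$ at $v$, crossings of the new edge in the blowup $\tau^\alpha$ are in bijection with turns of the $\gamma_w$ at $v$ split by $\alpha$, so $|\alpha| = 0$ forces $\alpha$ to be uncrossed in $\tau^\alpha$. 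Performing any Whitehead move $(\alpha, e)$ yields a reduced $\tau^\alpha_e$ in which $\alpha$ is still uncrossed (collapsing an edge can only cancel crossings of uncollapsed edges, never create them), so $\alpha$ has absolute value zero in the reduced $\tau^\alpha_e$, contradicting the ordinary edge case.

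For connectivity of the star graph I reduce to connectivity of the quotient graph on $\st(v)$ whose edges are turn orbits: the special-edge clause supplies U-turn loops at $e$ with every element of $\mathcal{G}_v$ appearing as a label, so the monodromy subgroup of the $\mathcal{G}_v$-cover is all of $\mathcal{G}_v$ and star-graph connectivity is equivalent to connectivity of this quotient. If the quotient admits a partition $\st(v) = E_1 \sqcup E_2$ with no turns between the parts, I plan to form an ideal edge $\alpha$ by taking one representative direction per underlying edge in $E_1$; then $|\alpha| = 0$ by hypothesis, contradicting the ideal-edge case already proved. The main obstacle is ensuring $\alpha$ satisfies all three conditions of the ideal-edge definition simultaneously --- in particular that $|\alpha|, |D_v - \alpha| \ge 2$ and that some direction in $\alpha$ has no reversed-underlying direction in $\alpha$; the degenerate configurations (such as $|E_1| = 1$ or $E_1$ consisting of mutually reversed self-loops) must be ruled out by using the pass-through turns coming from words $a_i s_j$ and $s_j a_i$ in $W$ to exhibit additional turns across the purported partition. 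The $n = 1$ case follows the same outline with $a_1 s_j$ in place of $a_i a_j$ throughout.
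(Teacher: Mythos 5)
Your first two assertions (nonvanishing of $|e|$, and nonvanishing of $|\alpha|$ by passing to the reduced graph $\tau^\alpha_e$ and observing that collapsing cannot create crossings of $\alpha$) are correct and in the same spirit as the paper, which in both cases invokes the Serre result that a group with a generating set elliptic in a splitting is itself elliptic. Your homomorphism-to-$\mathbb{Z}$ argument for a non-separating edge is a concrete instantiation of this. The special-edge clause is also essentially the paper's ``rubber band'' argument, although you should note that the path $p$ cannot be an arbitrary non-backtracking path from $v$ to $v'$: it must be the projection of the geodesic in the Bass--Serre tree between the fixed vertices of $A_i$ and $A_j$, which is what forces the cyclically reduced representative of $a_ia_j$ to have the form $a_i\cdot p\cdot a_j\cdot\bar p$ and, crucially, makes the first edge $e$ independent of the choice of $a_i$.

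The genuine gap is in the star-graph connectivity step. You propose to show the quotient graph on $\st(v)$ is connected by choosing one direction per edge in a would-be piece $E_1$ of a disconnecting partition, declaring this an ideal edge $\alpha$, and asserting $|\alpha| = 0$. This last assertion is false: $|\alpha|$ counts star-graph edges with exactly one endpoint in $\alpha$, and a turn between two directions $(g_1,e_1)$, $(g_2,e_1')$ with $e_1,e_1'\in E_1$ will contribute to $|\alpha|$ whenever only one of the two is the chosen representative --- the U-turns at the special edge $e$ (which lie entirely within one quotient piece) are an unavoidable source of exactly this phenomenon. So the ``no turns across the partition'' hypothesis does not give $|\alpha|=0$, and the reduction to the ideal-edge case fails. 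Moreover the whole reduction-via-monodromy scheme leans on the special-edge clause, which is only asserted (and only true in general) for $n\ge 2$; your closing parenthetical that the $n=1$ case ``follows the same outline with $a_1s_j$'' does not produce the needed U-turns, since $a_1s_j$ is not a product of two elliptic elements and its cyclically reduced representative is not forced to double back.

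The paper avoids both problems by taking an entire connected component $C$ of $\Gamma_v$ (which may contain several directions with the same underlying edge and is stabilized by a possibly nontrivial subgroup $\mathcal{G}_C\le\mathcal{G}_v$), blowing it up to obtain an edge with edge group $\mathcal{G}_C$, checking directly that the resulting graph of groups $\tau^C$ is minimal, and then applying the Serre corollary to the resulting one-edge splitting. In other words, the paper does not try to force $C$ into the ideal-edge framework at all; it uses the more flexible blow-up with nontrivial edge group. If you want to salvage your approach you would need to make the same move: the object to blow up is the $\mathcal{G}_v$-invariant set of \emph{all} directions over $E_1$, not a transversal, and the contradiction comes from minimality plus Serre rather than from the ideal-edge absolute-value bound.
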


\begin{proof}
    Let $\tau = (\mathcal{G},\sigma)$ be our reduced marked graph of groups.
    The main idea of the proof is~\cite[6.5 Corollary 2]{Trees},
    which states that if each $w \in W$ is elliptic 
    in (the action of $F$ on the Bass--Serre tree of)
    some graph of groups with fundamental group identified with $F$,
    then actually all of $F$ is elliptic.
    We produce such splittings by possibly blowing up $\tau$
    (although strictly speaking we do not always blow up \emph{ideal edges})
    and then collapsing all but one edge of the resulting graph of groups.
    As long as the blown-up graph of groups is minimal,
    we have a contradiction.

    Let us apply this principle.
    If $|e| = 0$, then each $\sigma_\sharp(w)$ for $w \in W$ does not cross the edge $e$.
    Collapse the other edges of $\mathcal{G}$ to obtain a \emph{free splitting} of $F$,
    that is, a graph of groups with trivial edge groups and fundamental group identified with $F$,
    in which $F$ is elliptic. Since $\mathcal{G}$ was minimal, this is a contradiction,
    so we conclude that $|e| \ne 0$.
    If $\alpha$ is an ideal edge of $\tau$ with $|\alpha| = 0$,
    then thinking of $\alpha$ as an edge of $\tau^\alpha$,
    we can make the same argument to reach a contradiction.
    Therefore $|\alpha| \ne 0$.

    Next we prove that the star graph of $\tau$ with respect to $W$ 
    has one component for each vertex $v$ of $\tau$.
    Write $\Gamma_v$
    for the induced subgraph of the star graph
    comprising all those edges between directions in $D_v$.
    If $v$ has valence one in $\mathcal{G}$, then $n \ge 2$,
    and letting $\mathcal{G}_v \cong A_i$,
    the existence of the words $a_i a_j$ (or their inverses) in $W$
    implies $\Gamma_v$ is connected.

    So suppose $v$ has valence at least two.
    If every component of $\Gamma_v$ has one vertex,
    then in particular some edge of $\mathcal{G}$ satisfies $|e| = 0$,
    which we already saw was impossible.
    So we may suppose some component $C$ of $\Gamma_v$ has at least two vertices.
    Since $\out(F)$ is infinite, assuming $\Gamma_v$ is disconnected,
    $\Gamma_v \setminus C$ has at least two vertices.
    To see this, observe that if $\Gamma_v \setminus C$ is a single direction $d$,
    then $|d| = 0$,
    which implies that the $\mathcal{G}_v$-orbit of $d$ cannot be contained in $C$,
    or, if $\mathcal{G}_v$ is trivial, that $|\bar d| = 0$,
    so $\bar d$ cannot be contained in $C$ if $\mathcal{G}_v$ is trivial.
    In other words, if $\Gamma_v \setminus C$ contains exactly one vertex,
    then it actually contains at least two.
    Write $\mathcal{G}_C$ for the stabilizer of $C$
    in the action of $\mathcal{G}_v$ on $\Gamma_v$.
    Although $C$ does not form an ideal edge in general,
    we may still apply the procedure in the paragraph ``Blowing up ideal edges''
    to produce a new graph of groups $\tau^C = (\mathcal{G}^C,\sigma^C)$
    with one more edge, which we denote as $C$.
    The edge $C$ has edge group $\mathcal{G}_C$.
    The only way $\mathcal{G}^C$ could fail to be minimal is if $\mathcal{G}_C = \mathcal{G}_v$
    and either $v$ or the new vertex $v_C$ has valence one in $\mathcal{G}^C$.
    But together this would imply that either $C$ is empty or all of $\Gamma_v$.
    Collapsing all the edges of $\mathcal{G}^C$ other than $C$
    produces a splitting (not necessarily free) of $F$ in which each $w \in W$ is elliptic,
    a contradiction.
    Therefore we conclude that $\Gamma_v$ is connected in this case.

    For the final statement, observe that topologically the edge path representing $a_i a_j$
    is like a rubber band held at two points:
    the map factors through a two-to-one branched cover of the circle over the interval.
    The branch points correspond to turns between a pair of directions with the same underlying edge.
    As $a_i$ and $a_j$ vary, every vertex of $\tau$ 
    (which has nontrivial vertex group because $\tau$ is reduced)
    appears as the image of one of these branch points.
    In fact, fixing $a_j$ but allowing $a_i$ to vary in $A_i$,
    if the branch point corresponding to $a_i$ yields a turn between a pair of directions
    with underlying oriented edge $e$,
    we see that \emph{every} pair of directions 
    with the same underlying oriented edge equal to $e$ are connected by an edge
    in the star graph.
\end{proof}

For the remainder of the paper, we fix $W$ such that balls are compact
and such that the hypotheses and conclusions of the previous lemma hold.

\section{Connectivity at Infinity}\label{connectivitysection}
We fix our free product
$F = A_1 * \cdots * A_n * F_k$.
Recall from the introduction
our dimension formula
\[
  \dim(L(F)) =
  \begin{cases}
    2k + n - 2 & n \ge 2 \\
    \max\{2k + n - 3, 0\} & \text{otherwise.}
  \end{cases}
\]
Thus it has dimension zero (i.e.\ is a single point)
for $(n,k) = (0,1), (1,0), (1,1)$, and $(2,0)$.
It follows that $\out(F)$ has zero ends (i.e.\ is finite) in these cases.

The complex $L$ has dimension one, i.e.\ is a tree, for $(n,k) = (3,0)$ and $(0,2)$.
Since it is easy to see that this tree is ``bushy'',
i.e.\ is not quasi-isometric to the line,
it follows that these groups, which have more than one end,
actually have infinitely many.
In~\cite{MyselfCAT0},
we showed that additionally the group $\out(A_1*A_2*\mathbb{Z})$
has infinitely many ends,
even though for this choice of $(n,k)$, the complex $L$ has dimension two.

The goal of this section is to show that in all the remaining cases,
the complex $L(F)$ is connected at infinity,
hence the groups $\out(F)$ are one ended.

Let $B_k$ denote the ball of radius $k$.
Since we have chosen $W$ such that balls of radius $k$ are compact,
it is the union of finitely many stars
of reduced marked graphs of groups,
where the \emph{star} of a reduced marked graph of groups $\tau$
is the subcomplex spanned by the set of all $\tau'$ collapsing onto $\tau$.
Since $L$ is locally finite
and the star of $\tau$ is finite,
there are only a finite number of reduced marked graphs of groups
whose stars have nonempty intersection with $B_k$.
Let $C_k$ denote the union of all of these
reduced marked graphs of groups,
and let
\[
  N(k) = \max\{\|\tau\| : \tau \in C_k\}.
\]
From here on out, we will say \emph{briar patch}
for \emph{reduced marked graph of groups.}
To show connectivity at infinity,
we will show that any two briar patches
with norm at least $N(k)$
can be connected by a path that lies outside of $B_k$.

To do this, we start with a path between our two briar patches
of a given form,
and then push it outside of $B_k$ by a homotopy.
We say that a path $P = (\gamma_0,\gamma_1,\ldots,\gamma_\ell)$
is \emph{standard}
if it has even length $\ell = 2m$,
if $\gamma_{2i}$ is a briar patch for $0 \le i \le m$,
and if $(\gamma_{2i},\gamma_{2i+1},\gamma_{2i+2})$
is an elementary Whitehead path for each $i$ satisfying
$0 \le i \le m-1$.
Since $\out(F)$ is generated by Whitehead automorphisms
in the sense of~\cite{CollinsZieschang}
together with the finite group of \emph{factor automorphisms}
that fix a certain briar patch,
given any two briar patches $\tau$ and $\tau'$,
there is a standard path from $\tau$ to $\tau'$.

We will push $P$ toward infinity
by eliminating local minima.
Consider a briar patch $\tau = \gamma_{2i}$
of minimal norm along $P$,
say $\|\tau\| = m$.
The briar patches $\gamma_{2i+2}$ and $\gamma_{2i-2}$
are obtained from $\tau$ by elementary Whitehead moves
$(\alpha,e)$ and $(\beta,f)$ respectively.
By choosing $\tau$ carefully,
we may assume that $\|\tau^\alpha_e\| > m$,
i.e.\ that $|\alpha| > |e|$
and that $\|\tau^\beta_f\| \ge m$,
so that $|\beta| \ge |f|$.
We replace the subpath
$(\tau^\beta_f, \tau^\beta, \tau, \tau^\alpha, \tau^\alpha_e)$
of $P$
by a new standard path from $\tau^\beta_f$ to $\tau^\alpha_e$
that passes through only briar patches of norm strictly greater than $m$.

\begin{lem}\label{mainconnectivitylemma}
  Suppose $\dim(L(F)) \ge 2$ and $F$ is not of the form $A_1*A_2*\mathbb{Z}$.
  Suppose $\tau$ is a briar patch of norm $m$
  and $\tau^\alpha_e$ and  $\tau^\beta_f$ are briar patches
  obtained from $\tau$ by elementary Whitehead moves
  satisfying $\|\tau^\alpha_e\| > \|\tau\|$
  and $\|\tau^\beta_f\| \ge \|\tau\|$.
  There is a standard path from $\tau^\alpha_e$ to $\tau^\beta_f$
  which passes through only briar patches of norm strictly greater than $m$.
\end{lem}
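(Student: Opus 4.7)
The plan is to construct the required standard path by case analysis on the combinatorial relationship between the ideal edges $\alpha$ and $\beta$, exploiting formula~\eqref{eqnstar} together with the positivity of all absolute values from \Cref{absolutenonzero} to keep intermediate norms strictly above $m$.

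First I would handle the easy case in which $\alpha$ and $\beta$ are compatible. Then they can be blown up simultaneously inside $\tau$ to yield a graph of groups $\tau^{\alpha,\beta}$ in the star of $\tau$, and by~\eqref{eqnstar} collapsing $e$ alone produces a graph of groups of norm $m + |\alpha| + |\beta| - |e|$, which is strictly greater than $m$ since $|\alpha|>|e|$ and $|\beta|>0$. Ordering the remaining collapses carefully, and using the surviving-edge condition in the definition of a vertex of $L$ to further reduce wherever necessary, I obtain a standard path $\tau^\alpha_e \to \cdots \to \tau^\beta_f$ through briar patches of norm greater than $m$.

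The substantive case is when $\alpha$ and $\beta$ are incompatible; they must then share a basepoint $v$, with $\mathcal{G}_v$-orbits that meet but are neither nested. Following the strategy Vogtmann uses in~\cite{Vogtmann} for $\out(F_k)$, I would construct an auxiliary ideal edge $\gamma$ based at $v$, compatible with (say) $\beta$, and supporting a strictly norm-increasing Whitehead move, so as to interpolate a sub-path $\tau^\alpha_e \to \cdots \to \tau^\gamma_d \to \cdots \to \tau^\beta_f$ of elementary Whitehead moves through briar patches of norm greater than $m$. The candidates for $\gamma$ are the Boolean combinations of $\alpha$ and $\beta$ inside $D_v$, namely $\alpha\cap\beta$, $\alpha\smallsetminus\beta$, $\beta\smallsetminus\alpha$, and $D_v\smallsetminus(\alpha\cup\beta)$, and the Culler--Vogtmann dot-product identity
\[
  |\alpha| + |\beta| = |\alpha\cap\beta| + |\alpha\cup\beta| + 2\,(\alpha\smallsetminus\beta)\cdot(\beta\smallsetminus\alpha)
\]
provides the accounting needed to select $\gamma$ with absolute value large enough that the Whitehead moves traversing it are norm-increasing by the margin dictated by~\eqref{eqnstar}.

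The main obstacle is this incompatible case, together with the task of verifying that each intermediate marked graph of groups along the new path really is a briar patch (reduced, with every valence-one and valence-two vertex carrying a nontrivial vertex group, and every edge surviving). The exclusion $F \ne A_1*A_2*\mathbb{Z}$ should enter precisely here, since that is the configuration where no suitable replacement $\gamma$ exists, in agreement with the fact that $\out(A_1*A_2*\mathbb{Z})$ has infinitely many ends. When $n\ge 2$, the ``moreover'' clause of \Cref{absolutenonzero} supplies a distinguished oriented edge at each vertex whose like-oriented directions are all connected in the star graph, making the replacement easy to access; for $n=1$ (so $F = A_1*F_k$ with $k\ge 2$) I would instead exploit the words $s_is_j^{\pm 1}$ and $s_ia_1$ contained in $W$ to produce the required star-graph edges by hand.
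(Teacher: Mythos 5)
Your high-level outline is in the right spirit---reduce to a compatible case, use $(\star)$ plus the positivity of absolute values from \Cref{absolutenonzero} to keep intermediate briar patches above norm $m$, and interpolate ideal edges when $\alpha$ and $\beta$ clash. The compatible case you sketch is essentially the paper's \Cref{compatiblepaths}, modulo details (you need to account for when $\{e,f\}$ is not a forest in $\tau^{\alpha,\beta}$, when $\alpha=\beta$, and when $e=f$, all of which the paper handles explicitly). And your computation $\|\tau^{\alpha,\beta}_e\| = m + |\alpha| + |\beta| - |e| > m$ is correct.

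However, the incompatible case, which is the substance of the lemma, has a genuine gap. Your Boolean-combination strategy ($\alpha\cap\beta$, $\alpha\smallsetminus\beta$, etc., plus the dot-product identity) is Culler--Vogtmann's peak-reduction machinery, and it breaks in the graph-of-groups setting in exactly the situations that make this lemma hard. Two ideal edges at the same vertex $v$ can be incompatible without meeting at all as subsets of $D_v$: in the paper's ``Case three'' one has $\alpha=\{u,v\}$ and $\beta=\{u,g.v\}$ for a nontrivial $g\in\mathcal{G}_v$, so $\alpha\cup\beta$ contains two directions from the same $\mathcal{G}_v$-orbit and is not an ideal edge, $\alpha\cap\beta=\{u\}$ is too small, and the other Boolean pieces are similarly illegal or useless. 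Your formula gives no accounting here. Moreover, a single interpolating $\gamma$ compatible with $\beta$ does not immediately give a path from $\tau^\alpha_e$ to $\tau^\gamma_d$: you would also need $\gamma$ compatible with $\alpha$, and in general no single ideal edge is simultaneously compatible with both, strictly increasing, and an honest ideal edge. The paper sidesteps this by first proving that $\alpha$ and $\beta$ may be replaced by size-two ideal edges supporting increasing moves (\Cref{mainsizetwolemma}, which in turn needs \Cref{triosincreasing} and \Cref{efbarflemma}), then constructing a ``good polygon'' (\Cref{maincompatibilitylemma}) whose boundary is a chain $\alpha'=\gamma_1,\ldots,\gamma_m=\beta'$ of pairwise-compatible-with-neighbors, strictly increasing, size-two ideal edges --- a chain that can be as long as six, not one. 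The hypothesis $F\ne A_1*A_2*\mathbb{Z}$ is used there to guarantee, in Cases two and three, the existence of a direction $w$ in a $\mathcal{G}_\star$-orbit disjoint from those of $u$ and $v$ to build the polygon's other corners; your sketch gestures at this but does not supply the construction. In short: your plan is plausible at the level of strategy, but the key technical lemmas (reduction to size two, the polygon construction, and the case analysis of how ideal edges can be incompatible under a nontrivial vertex-group action) are missing, and the Boolean-combination substitute you propose does not cover the graph-of-groups-specific incompatibilities.
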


The bulk of this section is given to proving \Cref{mainconnectivitylemma}.
For now we use it to prove the following piece of \Cref{maintheorem}.

\begin{thm}\label{mainconnectivity}
  If $F = A_1 * \cdots * A_n * F_k$
  is such that $\dim(L(F)) \ge 2$
  and not of the form $A_1*A_2*\mathbb{Z}$,
  then $L(F)$ is connected at infinity.
\end{thm}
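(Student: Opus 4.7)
The plan is to reduce the statement to connecting briar patches of large norm and then to push any such path off to infinity by iterated application of \Cref{mainconnectivitylemma}. More precisely, for each $k$ I would take $k' = N(k)$ and show that any two vertices $v, v'$ of $L$ outside $B_{k'}$ may be joined by a path in $L \setminus B_k$; the general case of arbitrary points of $L$ follows since any point lies in the interior of a simplex whose vertices can be treated individually.

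The reduction proceeds as follows. If $v \notin B_{N(k)}$ is not itself a briar patch, it collapses onto some briar patch $\tau_v$, and $v$ lies in the star of $\tau_v$. Since this star meets $L \setminus B_{N(k)}$, we have $\tau_v \notin C_k$, so $\|\tau_v\| > N(k)$; the star of $\tau_v$ is then disjoint from $B_k$ and it is contractible, so $v$ is connected to $\tau_v$ by a path in $L \setminus B_k$. Thus we may assume $v$ and $v'$ are briar patches $\tau, \tau'$ of norm $> N(k)$. Moreover, if a standard path $P = (\gamma_0, \gamma_1, \ldots, \gamma_{2L})$ from $\tau$ to $\tau'$ has every briar patch $\gamma_{2i}$ of norm $> N(k)$, then each intermediate vertex $\gamma_{2i+1}$ lies in the star of $\gamma_{2i}$, which is disjoint from $B_k$; so the whole path lies in $L \setminus B_k$.

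To construct such a path, start with any standard path $P$ from $\tau$ to $\tau'$, which exists because $\out(F)$ is generated by Whitehead automorphisms together with the finite group of factor automorphisms fixing a base briar patch. Let $m = \min_i \|\gamma_{2i}\|$. If $m > N(k)$ we are done. Otherwise, since $\|\gamma_0\|, \|\gamma_{2L}\| > N(k) \geq m$, the minimum is attained only at interior indices, so there is some $i$ with $\|\gamma_{2i}\| = m$ and $\|\gamma_{2(i-1)}\| > m$ (take the leftmost minimum). Apply \Cref{mainconnectivitylemma} with $\tau = \gamma_{2i}$, taking the elementary Whitehead move to $\gamma_{2(i-1)}$ as the ``$(\alpha, e)$-move'' (strictly increasing the norm) and the move to $\gamma_{2(i+1)}$ as the ``$(\beta, f)$-move'' (with norm $\geq m$). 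The lemma supplies a standard subpath from $\gamma_{2(i-1)}$ to $\gamma_{2(i+1)}$ through briar patches of norm strictly greater than $m$, which we splice into $P$ in place of $(\gamma_{2(i-1)}, \gamma_{2i-1}, \gamma_{2i}, \gamma_{2i+1}, \gamma_{2(i+1)})$. The resulting path has one fewer occurrence of a briar patch of norm $m$ and introduces no new briar patches of norm $\leq m$. Iterating within the current norm level eliminates all occurrences of norm $m$ after finitely many steps, at which point the minimum strictly increases. Since $\|\cdot\|$ takes values in the non-negative integers, after finitely many such rounds the minimum exceeds $N(k)$, yielding the desired path.

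The main obstacle is of course \Cref{mainconnectivitylemma} itself, whose combinatorial content—finding an alternative standard path through strictly higher-norm briar patches within the star of a local minimum—carries the geometric weight of the argument. Once it is in hand, the proof above is the structural bookkeeping that converts local pushing moves into a global path at infinity.
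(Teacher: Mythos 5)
Your proposal is correct and follows the same strategy as the paper: reduce to briar patches of norm greater than $N(k)$, connect them by a standard path, and iteratively apply \Cref{mainconnectivitylemma} to eliminate local minima of the norm, noting that the norm takes integer values so the iteration terminates. In fact your bookkeeping is slightly more careful than the paper's: you correctly observe that the minimum must be pushed past $N(k)$ (not merely past $k$, as the paper's proof literally states) since a blowup vertex $\gamma_{2i+1}$ can collapse onto a small-norm briar patch that does not lie on the path, so that the right criterion is that all briar patches on the path have stars disjoint from $B_k$; you also make explicit the reduction from arbitrary points of $L$ to briar patches, which the paper treats as implicit.
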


\begin{proof}
  In the situation of the statement,
  suppose $\rho$ and $\rho'$ are briar patches 
  of norm at least $N(k)$,
  and let $P$ be a standard path from $\rho$ to $\rho'$.
  If $P$ does not already lie outside $B_k$,
  then there is a briar patch $\tau$ of norm $m \le k$ along $P$.
  By assumption on $\rho$ and $\rho'$,
  we may choose $\tau$ such that the adjacent briar patches along $P$,
  call them $\tau^\alpha_e$ and $\tau^\beta_f$,
  which we have already observed are obtained from $\tau$
  by elementary Whitehead moves,
  satisfy the assumptions of \Cref{mainconnectivitylemma}.
  In fact, we may choose $\tau$ to have minimum norm along $P$
  and satisfying these assumptions.
  We may therefore replace the subpath
  $(\tau^\beta_f, \tau^b, \tau, \tau^\alpha, \tau^\alpha_e)$
  by a new standard path from $\tau^\beta_f$ to $\tau^\alpha_e$
  with only briar patches of norm strictly greater than $\tau$ in its interior.

  We may repeat this process.
  At each stage, our new standard path has fewer briar patches
  of minimum norm $m$,
  and since $N(k) > k$,
  we may even increase this minimum until it is bigger than $k$,
  at which point after finitely many repetitions of this process,
  we have a standard path from $\rho$ to $\rho'$
  that avoids $B_k$, as desired.
\end{proof}

The process in \Cref{mainconnectivitylemma}
is the opposite of Collins and Zieschang's ``Peak Reduction''.
Like in~\cite{CollinsZieschang},
the easy case is when the ideal edges $\alpha$ and $\beta$ are compatible.

\begin{lem}\label{compatiblepaths}
  Suppose $\alpha$ and $\beta$
  are compatible ideal edges of the briar patch $\tau$,
  and suppose $(\alpha,e)$ and $(\beta,f)$
  are Whitehead moves such that $(\alpha,e)$ strictly increases
  and $(\beta,f)$ does not decrease norm.
  There is a standard path from $\tau^\alpha_e$ to $\tau^\beta_f$
  with at most one briar patch in its interior
  whose norm is strictly greater than the norm of $\tau$.
\end{lem}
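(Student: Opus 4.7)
My plan is to apply equation~\eqref{eqnstar} to the ideal forest $\Phi = \{\alpha, \beta\}$ in $\tau$ together with the edge pair $\{e, f\}$, so as to isolate a single briar patch of norm strictly above $\|\tau\|$ sitting between $\tau^\alpha_e$ and $\tau^\beta_f$. Compatibility of $\alpha$ and $\beta$ is precisely the hypothesis that $\Phi$ is an ideal forest, so the simultaneous blow-up $\tau^{\alpha,\beta}$ is a valid marked graph of groups containing $\alpha$ and $\beta$ as two new edges. The first step will be to verify that $\{e, f\}$ forms a collapsible forest in $\tau^{\alpha,\beta}$, with directions $d_e \in D(\alpha)$ and $d_f \in D(\beta)$ coming from the given Whitehead moves. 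Once that is checked, equation $(\star)$ yields
\[
  \|\tau^{\alpha,\beta}_{e,f}\| \;=\; \|\tau\| + |\alpha| + |\beta| - |e| - |f|,
\]
which strictly exceeds $\|\tau\|$ because $|\alpha| > |e|$ records the strict norm increase of $(\alpha, e)$ and $|\beta| \ge |f|$ records the non-decrease of $(\beta, f)$.

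Given that norm computation, the standard path I will exhibit is
\[
  \bigl(\tau^\alpha_e,\ \tau^{\alpha,\beta}_e,\ \tau^{\alpha,\beta}_{e,f},\ \tau^{\alpha,\beta}_f,\ \tau^\beta_f\bigr).
\]
The first half $(\tau^\alpha_e, \tau^{\alpha,\beta}_e, \tau^{\alpha,\beta}_{e,f})$ is an elementary Whitehead path, blowing up the image of $\beta$ in $\tau^\alpha_e$ and then collapsing $f$; the second half $(\tau^{\alpha,\beta}_{e,f}, \tau^{\alpha,\beta}_f, \tau^\beta_f)$ is the elementary Whitehead path from $\tau^\beta_f$ that blows up the image of $\alpha$ and then collapses $e$, traversed in reverse. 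I will also confirm that the non-briar intermediate vertices $\tau^{\alpha,\beta}_e$ and $\tau^{\alpha,\beta}_f$ are legitimate vertices of $L$: the edges $\alpha$ and $\beta$ are surviving in each because the further collapse to the briar patch $\tau^{\alpha,\beta}_{e,f}$ preserves them, while the remaining edges of $\tau$ survive because they are already surviving in $\tau^{\alpha,\beta}_{e,f}$. The only briar patch in the interior of the path is then $\tau^{\alpha,\beta}_{e,f}$, whose norm strictly exceeds $\|\tau\|$, as required.

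The hardest part will be the combinatorial verification that $\{e, f\}$ is collapsible in $\tau^{\alpha,\beta}$, which splits along the two forms of compatibility. If $\alpha$ and $\beta$ are disjoint, the new vertices $v_\alpha$ and $v_\beta$ appear at different locations of $\tau^{\alpha,\beta}$ and the edges $e, f$ attach to them respectively, so the two collapses do not interfere. The more delicate case is nested compatibility, say $\alpha \subsetneq \beta$, in which $\alpha$ becomes an ideal edge at $v_\beta$ after the blow-up of $\beta$, the edge $\alpha$ in $\tau^{\alpha,\beta}$ runs from $v_\beta$ to $v_\alpha$, and $f$ attaches either at $v_\beta$ or at $v_\alpha$ according to whether the direction $d_f$ lies outside or inside the orbit of $\alpha$. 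Running carefully through these subcases, and leveraging the fact that the individual moves $(\alpha, e)$ and $(\beta, f)$ already produce briar patches in $\tau$, should suffice to show the joint collapse still yields a valid vertex of $L$; apart from this bookkeeping the argument is a direct application of $(\star)$.
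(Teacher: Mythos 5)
Your proposal matches the paper's proof in the case where $\{e,f\}$ is a collapsible forest in $\tau^{\alpha,\beta}$: the path $(\tau^\alpha_e,\ \tau^{\alpha,\beta}_e,\ \tau^{\alpha,\beta}_{e,f},\ \tau^{\alpha,\beta}_f,\ \tau^\beta_f)$ and the norm computation via~\eqref{eqnstar} are exactly right there. But there is a genuine gap: you treat collapsibility of $\{e,f\}$ as an automatic consequence of compatibility, and in particular you assert that when $\alpha$ and $\beta$ are disjoint ``the two collapses do not interfere.'' This is false. Compatibility of $\alpha$ and $\beta$ puts no constraint on the orientation-reversed directions $\bar e$ and $\bar f$; nothing stops $\bar e$ from lying in the $\mathcal{G}$-orbit of $\beta$ and $\bar f$ from lying in that of $\alpha$, in which case, after the double blow-up, $e$ runs from $v_\alpha$ to $v_\beta$ and $f$ runs from $v_\beta$ to $v_\alpha$, so $\{e,f\}$ is a loop and is not collapsible. (This can happen whether $\alpha$ and $\beta$ are disjoint at the same vertex, at different vertices, or nested.) The case $\alpha=\beta$, which your forest $\Phi=\{\alpha,\beta\}$ does not even parse as a two-element ideal forest, is also unaddressed.

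The paper's proof handles precisely these missing cases. When $\alpha=\beta$ it uses the degenerate elementary path $(\tau^\alpha_e,\tau^\alpha,\tau^\alpha_f)$ with no interior briar patch. When $\alpha\ne\beta$ and $\{e,f\}$ fails to be a forest, it observes that the two edges must form the loop on two edges with $\bar e \in D(\beta)$ and $\bar f \in D(\alpha)$, so that $\{\alpha,e\}$ and $\{\beta,f\}$ are forests, and then routes the standard path through $\tau^\alpha_f$ (if $|e|\ge|f|$) or through $\tau^\beta_e$ (if $|f|>|e|$), checking via~\eqref{eqnstar} that the single interior briar patch has norm strictly larger than $\|\tau\|$. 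Without this case analysis, the lemma is not proved.
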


\begin{proof}
  We follow~\cite[Lemma 3.1]{Vogtmann}.
  Consider $(\mathcal{G},\sigma) = \tau^{\alpha,\beta}$.
  We think of $\alpha$ and $\beta$ as edges of $\mathcal{G}$.
  Our standard path lies in the link of $(\mathcal{G},\sigma)$ in $L$
  and we describe it by listing the edges of $\mathcal{G}$
  which must be collapsed.
  So, for example, we have $\tau^\alpha_e = \{\beta,e\}$.

  If $\{e,f\}$ is a (collapsible) forest in $\mathcal{G}$,
  we may take our path to be
  \[
    (\{\beta,e\}, \{e\}, \{e,f\}, \{f\}, \{\alpha,f\}).
  \]
  If $f \ne e$, the middle vertex of this path is $\tau^{\alpha,\beta}_{e,f}$.
  Since $|\alpha| > |e|$ and $|\beta| \ge |f|$,
  \Cref{eqnstar} shows that this briar patch has norm greater than $\|\tau\|$.
  If instead $e = f$,
  this path degenerates to the elementary Whitehead path
  $(\tau^\alpha_e, \tau^{\alpha,\beta}_e, \tau^\beta_e)$.

  Suppose now that $\{e,f\}$ is not a forest in $\mathcal{G}$.
  One case where this happens is when $\alpha = \beta$.
  In this case both $e$ and $f$ belong to $D(\alpha)$,
  so we may take the degenerate path
  $(\tau^\alpha_e,\tau^\alpha,\tau^\alpha_f)$.

  Suppose instead that $\alpha \ne \beta$.
  Since $e$ and $f$ are individually collapsible in $\mathcal{G}$,
  they do not form loops and have at least one endpoint
  with trivial vertex group.
  If $\{e,f\}$ is not a forest,
  the subgraph of groups spanned by $\{e,f\}$
  must be connected and have fundamental group of the form $A_1 * A_2$,
  $\mathbb{Z}$ or $A_1 * \mathbb{Z}$.
  In other words, the subgraph is either a loop on two edges or a path on two edges,
  and at least one vertex common to both edges has trivial vertex group.
  In fact, because $\alpha$ and $\beta$ are distinct,
  the subgraph must contain both $v_\alpha$ and $v_\beta$, 
  both of which have trivial vertex group.
  It follows that the subgraph is the loop on two edges 
  with
  fundamental group $\mathbb{Z}$.
  We see that $\bar e \in D(\beta)$ and $\bar f \in D(\alpha)$.
  In other words, $\{\alpha, e\}$ and $\{\beta, f\}$ are forests.
  If $|e| \ge |f|$, then we may consider the path
  \[
    (\{\beta,e\}\{\beta\}\{\beta,f\},\{f\},\{\alpha,f\}).
  \]
  The briar patch $\tau^\alpha_f$ corresponding to $\{\beta,f\}$
  has norm
  \[ \|\tau^\alpha_f\| \ge \|\tau^\alpha_e\| > \|\tau\|. \]
  if $|f| > |e|$, then we may consider instead the path
  $(\{\beta,e\}, \{e\}, \{\alpha,e\}, \{\alpha\}, \{\alpha,f\})$.
  The briar patch $\tau^\beta_e$ corresponding to $\{\alpha,e\}$
  has norm
  \[
    \|\tau^\beta_e\| > \|\tau^\beta_f\| \ge \|\tau\|. \qedhere
  \]
\end{proof}

As in~\cite{Vogtmann},
the proof of \Cref{compatiblepaths}
gives a simplicial map of a subdivided square
into $L$ which maps the center to $\tau^{\alpha,\beta}$
and the corners to $\tau$, $\tau^\alpha_e$, $\tau^\beta_f$ and a briar patch
$\tau^{\alpha,\beta}_T$ for some maximal forest $T$ in $\tau^{\alpha,\beta}$.
In the case where $\alpha\ne \beta$ and $e \ne f$,
then $\tau^{\alpha,\beta}_T$ is a briar patch with norm strictly greate than $\tau$.
The square, see \Cref{homotopyfig},
provides a homotopy rel endpoints between the old path and the new one.

\begin{figure}[ht]
    \begin{center}
	    \def\svgwidth{\columnwidth}
\begingroup%
  \makeatletter%
  \providecommand\color[2][]{%
    \errmessage{(Inkscape) Color is used for the text in Inkscape, but the package 'color.sty' is not loaded}%
    \renewcommand\color[2][]{}%
  }%
  \providecommand\transparent[1]{%
    \errmessage{(Inkscape) Transparency is used (non-zero) for the text in Inkscape, but the package 'transparent.sty' is not loaded}%
    \renewcommand\transparent[1]{}%
  }%
  \providecommand\rotatebox[2]{#2}%
  \newcommand*\fsize{\dimexpr\f@size pt\relax}%
  \newcommand*\lineheight[1]{\fontsize{\fsize}{#1\fsize}\selectfont}%
  \ifx\svgwidth\undefined%
    \setlength{\unitlength}{544.2519685bp}%
    \ifx\svgscale\undefined%
      \relax%
    \else%
      \setlength{\unitlength}{\unitlength * \real{\svgscale}}%
    \fi%
  \else%
    \setlength{\unitlength}{\svgwidth}%
  \fi%
  \global\let\svgwidth\undefined%
  \global\let\svgscale\undefined%
  \makeatother%
  \begin{picture}(1,0.49479167)%
    \lineheight{1}%
    \setlength\tabcolsep{0pt}%
    \put(0,0){\includegraphics[width=\unitlength,page=1]{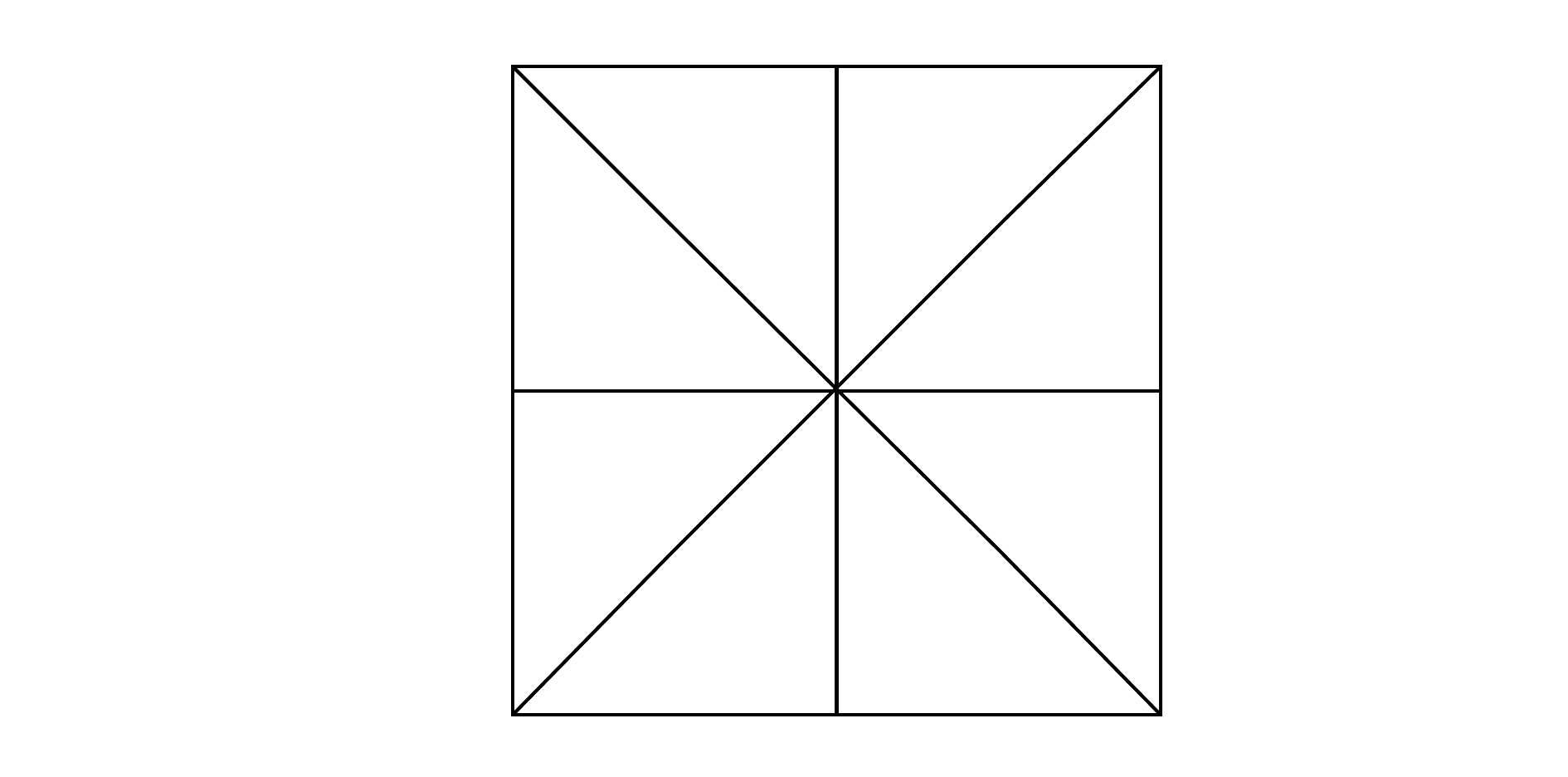}}%
    \put(0.31856793,0.01674302){\color[rgb]{0,0,0}\makebox(0,0)[lt]{\lineheight{1.25}\smash{\begin{tabular}[t]{l}$\tau$\end{tabular}}}}%
    \put(0.72981373,0.01574751){\color[rgb]{0,0,0}\makebox(0,0)[lt]{\lineheight{1.25}\smash{\begin{tabular}[t]{l}$\tau^\alpha_e$\end{tabular}}}}%
    \put(0.31404817,0.47127559){\color[rgb]{0,0,0}\makebox(0,0)[lt]{\lineheight{1.25}\smash{\begin{tabular}[t]{l}$\tau^\beta_f$\end{tabular}}}}%
    \put(0.57880712,0.26926414){\color[rgb]{0,0,0}\makebox(0,0)[lt]{\lineheight{1.25}\smash{\begin{tabular}[t]{l}$\tau^{\alpha,\beta}$\end{tabular}}}}%
    \put(0.73035574,0.46990232){\color[rgb]{0,0,0}\makebox(0,0)[lt]{\lineheight{1.25}\smash{\begin{tabular}[t]{l}$\tau^{\alpha,\beta}_T$\end{tabular}}}}%
  \end{picture}%
\endgroup%

        \caption{The subdivided square 
        in the proof of \Cref{compatiblepaths}.}\label{homotopyfig}
    \end{center}
\end{figure}

Now, in general, we may not assume that $\alpha$ and $\beta$
are compatible.
The strategy for proving \Cref{mainconnectivitylemma}
is to find a sequence of ideal edges 
$\alpha = \gamma_0,\gamma_1,\ldots,\gamma_n = \beta$
such that each $\gamma_i$ is compatible with its neighbors
and supports a strictly increasing Whitehead move $(\gamma_i,e_i)$.
By \Cref{compatiblepaths},
we obtain a standard path from $\tau^{\gamma_i}_{e_i}$
to $\tau^{\gamma_{i+1}}_{e_{i+1}}$,
and the concatenation of all of these standard paths 
will be the standard path we seek.
We will describe the outline of the argument in a moment.

Firstly, the \emph{size} of an ideal edge is its cardinality,
while the \emph{size} of a Whitehead move is the size of its ideal edge.
(Let us remark that in the case where $n = 0$
and an ideal edge $\alpha$ is equivalent to $D_v - \alpha$,
we should take the size of $\alpha$ to mean the size of the smaller of the two.)
Next, let the \emph{edge number} of $F$, denoted $\en(F)$,
be the quantity $2k + n - 1$.
Assuming that $n \ge 1$, if $\tau$ is a briar patch in $L(F)$ 
with one vertex $\star$
with valence greater than one,
then the valence of $\star$ is $\en(F)$.
Finally, say that a vertex of $\tau$ is \emph{active}
if it has valence at least two.
Active vertices support ideal edges.

In the course of our proof of \Cref{mainconnectivitylemma},
each ideal edge $\gamma_i$ aside from $\alpha$ and $\beta$
will have size two or size three.
Our first step is to reduce to the case where $\tau$ has one active vertex.
\begin{lem}\label{oneactivevertexlemma}
  Suppose $\tau$ has more than one active vertex
  and that the given ideal edges $\alpha$ and $\beta$ are not already compatible.
  Then $\alpha$ and $\beta$ are compatible
  with a size-two ideal edge $\gamma$ (based at a different vertex)
  which strictly increases norm.
\end{lem}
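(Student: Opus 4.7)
The plan is to exploit the fact that if $\alpha$ and $\beta$ are not compatible, they cannot be based at different vertices, since ideal edges at different vertices are by definition disjoint, hence compatible. So $\alpha$ and $\beta$ share a basepoint, call it $v$, and the hypothesis that $\tau$ has more than one active vertex supplies some active $w \ne v$. Any ideal edge based at $w$ is then automatically disjoint from both $\alpha$ and $\beta$, and so compatible with each. The task reduces to producing at $w$ a size-two ideal edge $\gamma$ which supports a strictly norm-increasing Whitehead move.

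Write $\gamma = \{d_1, d_2\}$ with $d_i = (g_i, e_i)$. The ideal edge axioms are equivalent to the three conditions $|D_w| \ge 4$, $e_1 \ne e_2$, and $e_1 \ne \bar{e_2}$; under these, both $d_i$ lie in $D(\gamma)$, so the two Whitehead moves $(\gamma, e_i)$ are defined. Unwinding the dot-product definition of absolute value gives
\[
    |\gamma| = |d_1| + |d_2| - 2\,m(d_1, d_2),
\]
where $m(d_1, d_2)$ counts the edges of the star graph $\Gamma_w$ joining $d_1$ to $d_2$. By \Cref{absolutenonzero}, every direction has strictly positive absolute value, so if I can arrange that $d_1$ and $d_2$ are \emph{non-adjacent} in $\Gamma_w$, then $|\gamma| = |d_1| + |d_2| > |d_i| = |e_i|$ for both $i$, and \eqref{eqnstar} produces the required strictly norm-increasing Whitehead moves $(\gamma, e_i)$.

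It remains to find such a non-adjacent pair whose underlying edges $e_1 \ne e_2$ are not mutually reverses, inside a $D_w$ with $|D_w| \ge 4$. The valence count is easy in most cases: if $|\mathcal{G}_w| \ge 2$ then $|D_w| = |\mathcal{G}_w| \cdot \operatorname{valence}(w) \ge 4$ automatically, and if $\mathcal{G}_w$ is trivial then the briar patch conditions forbid $\operatorname{valence}(w) \le 2$ at an active vertex. The remaining possibility, $\operatorname{valence}(w) = 3$ with $\mathcal{G}_w$ trivial, gives $|D_w| = 3$ and admits no size-two ideal edge at $w$; in that case one passes to a different active vertex, or invokes a direct argument to rule out the configuration using the global structure of the briar patch.

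The main obstacle will be producing the non-adjacent pair in $\Gamma_w$. The star graph at $w$ is connected by \Cref{absolutenonzero}, and when $n \ge 2$ the final statement of that lemma pins down a distinguished oriented edge $e$ at $w$ all of whose direction-pairs form a clique in $\Gamma_w$. My strategy is to place $d_1$ in the $\mathcal{G}_w$-orbit of this distinguished edge $e$, so that $d_1$ has controlled adjacencies, and then comb through the concrete catalog of words in $W$ (the $a_ia_j$, $s_ia_j$, $a_js_i$, and $s_is_j^{\pm 1}$) to rule out that every candidate $d_2$ on a different underlying edge is star-graph adjacent to $d_1$. The thorniest case, which I expect will require careful enumeration, is $n = 1$, where the $a_ia_j$ words are unavailable and the combinatorics of $\Gamma_w$ is governed only by the basis elements and their interactions with the single factor $A_1$.
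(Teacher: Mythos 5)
Your reduction to the real content — since $\alpha$ and $\beta$ are not compatible they must share a basepoint $v$, a second active vertex $w$ exists, and anything based at $w$ is automatically disjoint from (hence compatible with) both $\alpha$ and $\beta$, so one only needs a strictly norm-increasing size-two Whitehead move at $w$ — is exactly how the paper proceeds. From there, however, you diverge and leave a genuine gap. The paper observes that more than one active vertex forces $n\geq 2$ (a briar patch with $n\geq 1$ has all vertex groups nontrivial, hence exactly $n$ vertices, so two active vertices require $n\geq 2$), and then invokes \Cref{sizetwoexistence}, which is proved via \Cref{threedirections}: one starts from the edge in the star graph joining two directions $d,d'$ with the \emph{same} underlying oriented edge, supplied by the last clause of \Cref{absolutenonzero}, considers the two size-two ideal edges $\{d,c\}$ and $\{d,c'\}$, and shows by \Cref{threedirections} that if both were reductive then $\{d\}\cdot S=0$ for $S$ the remaining directions — contradicting $\{d\}\cdot\{d'\}>0$. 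This is a short, complete contradiction argument requiring no control over the full adjacency structure of $\Gamma_w$.

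Your proposal instead tries to exhibit a pair $d_1,d_2$ of directions at $w$ that are literally \emph{non-adjacent} in the star graph and lie on edges that are neither equal nor mutually inverse, so that $|\gamma|=|d_1|+|d_2|$ outright. This is a strictly stronger thing to produce than what the lemma needs, and you stop short of establishing it: the last paragraph is a plan (``comb through the concrete catalog of words in $W$'', ``I expect will require careful enumeration'') rather than a proof, and there is no argument ruling out the possibility that $\Gamma_w$ is too densely connected for such a pair to exist. Moreover, two of the cases you flag as potential obstructions are vacuous here: the case $n=1$ cannot arise under the hypothesis of the lemma, and the case of a trivial vertex group $\mathcal{G}_w$ cannot arise either, since when $n\geq 1$ every vertex of a briar patch carries a nontrivial group. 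Noticing that $n\geq 2$ is forced would have eliminated your ``thorniest case'' entirely, but the substantive gap — replacing the non-adjacent-pair search with an actual argument, e.g.\ the dot-product contradiction of \Cref{threedirections} — would still remain.
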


\begin{proof}
  This is a quick corollary of the fact (\Cref{sizetwoexistence} below)
  that when $n \ge 2$,
  which is necessary for $\tau$ to have more than one active vertex,
  each active vertex supports a strictly increasing Whitehead move of size two,
  and that ideal edges based at different vertices are compatible.
\end{proof}

By \Cref{compatiblepaths}, \Cref{mainconnectivitylemma} follows in this case.

Supposing then instead that $\tau$ has only one active vertex,
our first step is the following lemma.
\begin{lem}\label{mainsizetwolemma}
  If $\tau$ is a briar patch supporting the Whitehead move $(\alpha,e)$
  such that $|\alpha| \ge |e|$,
  then either $\alpha$ has size two,
  or $\alpha$ is compatible with a size-two ideal edge
  which supports a strictly increasing Whitehead move.
\end{lem}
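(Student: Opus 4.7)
The strategy is to locate the required size-two ideal edge as a subset of $\alpha$ itself. By \Cref{oneactivevertexlemma} the interesting setting is that $\tau$ has a single active vertex $v$, and then any two-element subset of $\alpha$ is automatically compatible with $\alpha$ (it is contained in $\alpha$ in the sense of \Cref{introsection}). So my task is to exhibit two directions $\{d_1, d_2\} \subseteq \alpha$ that form an ideal edge supporting a strictly increasing Whitehead move.

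I would first translate the condition to the star graph. Writing $d_i = (g_i, e_i)$ and $n(d_1, d_2)$ for the number of star graph edges between $d_1$ and $d_2$, a pair $\gamma = \{d_1, d_2\} \subseteq \alpha$ is a valid ideal edge exactly when $e_1 \ne \bar e_2$, in which case both $d_1, d_2 \in D(\gamma)$ and
\[ |\gamma| = |d_1| + |d_2| - 2 n(d_1, d_2). \]
By \eqref{eqnstar} then $\gamma$ supports a strictly increasing Whitehead move if and only if $\max(|d_1|, |d_2|) > 2 n(d_1, d_2)$.

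Now the pigeonhole step. Pick any $d_0 \in D(\alpha)$, which exists by the ideal edge axiom. Because $\alpha$ contains at most one direction per $\mathcal{G}_v$-orbit, its directions have pairwise distinct underlying edges, and $d_0 \in D(\alpha)$ forces $\bar e_0$ to not appear among them. In particular every pair $\{d_0, d\}$ with $d \in \alpha \setminus \{d_0\}$ is a valid ideal edge. If none of them supports a strictly increasing Whitehead move, then $|d_0| \le 2 n(d_0, d)$ for every such $d$. Summing, and using that star graph edges from $d_0$ into $\alpha \setminus \{d_0\}$ contribute at most $|d_0|$ in total, yields
\[ (s - 1) |d_0| \le 2 \sum_{d \in \alpha \setminus \{d_0\}} n(d_0, d) \le 2 |d_0|, \]
where $s$ is the size of $\alpha$. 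Hence $s \le 3$, so whenever $s \ge 4$ a suitable pair exists.

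The remaining case $s = 3$ is where I expect the real work. The pigeonhole inequality is tight there, forcing $d_0$ to send all of its star graph edges into $\alpha$, with $n(d_0, d_1) + n(d_0, d_2) = |d_0|$ and each summand equal to $|d_0|/2$. The hypothesis $|\alpha| \ge |e|$ now becomes essential: it controls the number of star graph edges leaving $\alpha$ from $d_1$ and $d_2$. I plan to combine this with the \emph{moreover} clause of \Cref{absolutenonzero} --- which supplies, when $n \ge 2$, a star graph edge between every pair of directions with a common underlying edge --- and with the connectivity of the star graph, either to derive a contradiction or to locate a strictly-increasing size-two ideal edge whose $\mathcal{G}_v$-orbit is disjoint from that of $\alpha$ (the other flavour of compatibility). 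The sub-case $\mathcal{G}_v$ trivial, where $\alpha$ is equivalent to its complement, may require separate treatment.

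The main obstacle is precisely this $s = 3$ analysis: averaging alone cannot settle it, and one must leverage the concrete structure of $W$ via \Cref{absolutenonzero} to exclude the tight configurations it leaves open. A secondary obstacle is verifying the ideal-edge conditions for the selected pair in borderline cases where the underlying edges of $d_1, d_2$ happen to be the two halves of a loop at $v$; since in the $s \ge 4$ case such a ``partner'' excludes at most one choice for the second direction, this causes no trouble there, but it will reappear in the $s = 3$ analysis.
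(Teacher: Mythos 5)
Your pigeonhole reduction to $s \le 3$ is correct and is a genuine alternative to what the paper does for large $\alpha$: the paper instead observes that any ideal edge of size at least four must contain a trio and then invokes \Cref{triosincreasing}, whereas your averaging argument (using that $\sum_{d \in \alpha \setminus \{d_0\}} n(d_0,d) \le |d_0|$ together with $|d_0| \le 2n(d_0,d)$ for each $d$) dispatches $s \ge 4$ without even touching the hypothesis $|\alpha| \ge |e|$. The translation of ``supports a strictly increasing Whitehead move'' for a size-two $\gamma = \{d_1,d_2\}$ into the condition $\max(|d_1|,|d_2|) > 2n(d_1,d_2)$ is also correct, given that $d_1, d_2 \in D(\gamma)$, which you verify.

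However, there is a genuine gap: the $s = 3$ case is not proved, only planned, and that is where all the content of the lemma lives. The paper's proof handles $s=3$ by splitting into two subcases. If the three underlying oriented edges are pairwise distinct as \emph{unoriented} edges, $\alpha$ is a trio and \Cref{triosincreasing} applies; the argument there applies \Cref{threedirections} symmetrically to \emph{all three} pairs (not only the two pairs through $d_0$, which is what your pigeonhole examines) to force $u,v,w$ to be an isolated component of the star graph, contradicting \Cref{absolutenonzero}. Your pigeonhole at $s=3$ forces $d_0$ to send all of its star graph edges into $\alpha$ with the two counts equal, but it leaves the pair $\{d_1,d_2\}$ unexamined; considering that pair is precisely what finishes the trio subcase. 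The other subcase is when the underlying oriented edges are $\{e, f, \bar f\}$, so $D(\alpha)$ is the singleton $\{u\}$ forcing $d_0 = u$, and the pair $\{d_1,d_2\} = \{v,\bar v\}$ is \emph{not} an ideal edge, so no third pair is available. This is \Cref{efbarflemma}, and it is the only place the hypothesis $|\alpha| \ge |e|$ is actually used: combined with \Cref{threedirections} it forces $|u| = |v|$ and $\{v\}\cdot\{\bar v\} = 0$, and then a parity argument on the occurrences of $e$ and $f$ in the words of $W$ (projecting to $H_1(F;\mathbb{Z}/2\mathbb{Z})$) gives the contradiction. You correctly anticipated that the tight $s=3$ configuration must be ruled out using the concrete structure of $W$ via \Cref{absolutenonzero}, but no such argument is supplied. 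To close the proposal you would need to fill in the trio subcase by also testing $\{d_1,d_2\}$, and essentially reprove \Cref{efbarflemma} for the remaining $\{e,f,\bar f\}$ configuration.
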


\Cref{mainsizetwolemma} allows us to restrict our attention
to the more well-behaved class of size-two Whitehead moves.

Here is our main step.
Consider the simplicially subdivided polygons in \Cref{compatiblefig}.
Further subdivide each one by introducing a new vertex
at the midpoint of each edge in the boundary of the polygon.
Suppose $(\alpha,e)$ and $(\beta,f)$ are Whitehead moves
of size two based at $\tau$
such that $|\alpha| \ge |e|$ and $|\beta| > |f|$.
A \emph{good polygon from $\alpha$ to $\beta$} in the star of $\tau$
is a simplicial map of the further subdivided polygon into $L$
with the following properties.
\begin{enumerate}
  \item The center of the polygon is mapped to $\tau$.
  \item Each vertex of the polygon 
    (three for the triangle, four for the rectangle, six for the hexagon)
    is mapped to a marked graph of groups $\tau^\sigma$
    obtained from $\tau$ by blowing up an ideal edge $\sigma$ of size two.
  \item The marked graphs of groups $\tau^\alpha$ and $\tau^\beta$
    are mapped to vertices of the polygon.
  \item Each midpoint of an edge 
    (three for the triangle, four for the rectangle, six for the hexagon)
    is mapped to a marked graph of groups $\tau^\sigma$
    obtained from $\tau$ by blowing up an ideal edge of size three.
  \item Each size-three ideal edge $\sigma$ 
    is compatible with the size-two ideal edges
    corresponding to the endpoints of the edge which $\sigma$ 
    corresponds to the midpoint of.
  \item At most one size-three ideal edge 
    corresponding to a vertex of the polygon is reductive.
  \item At most one size-two ideal edge (specifically $\alpha$)
    corresponding to a vertex of the polygon is reductive.
\end{enumerate}

\begin{figure}
    \begin{center}
	    \def\svgwidth{\columnwidth}
	        \import{./figures}{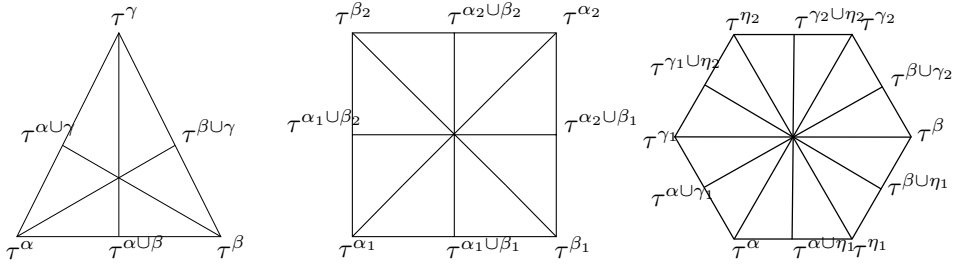}
	
        \caption{The simplicially subdivided polygons 
        in the proof of \Cref{mainconnectivity}.}\label{compatiblefig}
    \end{center}
\end{figure}

\begin{lem}\label{maincompatibilitylemma}
  Suppose $\tau$ has one active vertex
  supporting Whitehead moves $(\alpha,e)$ and $(\beta,f)$
  of size two such that $|\alpha| \ge |e|$ and $|\beta| > |f|$.
  Assume that $n \ge 1$, $\en(F) \ge 3$,
  and that $F \ne A_1 * A_2 * \mathbb{Z}$,
  and that $\alpha$ and $\beta$ are not compatible.
  Either (after rechoosing $\alpha$ and $\beta$ in their equivalence class)
  we have that $\alpha \cup \beta$ is an ideal edge which is not reductive, or
  there is a good polygon in the star of $\tau$
  from $\alpha$ to $\beta$.
\end{lem}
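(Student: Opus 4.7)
The plan is to analyze the combinatorial structure of $\alpha$ and $\beta$ as subsets of the directions $D_v$ at the unique active vertex $v$. Since both are incompatible size-two ideal edges, after replacing them by equivalent ideal edges their orbit-supports can be arranged to share exactly one direction; writing $\alpha = \{d, d'\}$ and $\beta = \{d, d''\}$, the union $\alpha \cup \beta = \{d, d', d''\}$ is the natural first candidate size-three ideal edge.

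The first step is to decide when $\alpha \cup \beta$ actually is an ideal edge: I would verify that its complement in $D_v$ has at least two elements (forced by $\en(F) \ge 3$, modulo the small boundary case ruled out by the exclusion of $A_1 * A_2 * \mathbb{Z}$), that it still picks at most one direction per $\mathcal{G}_v$-orbit, and that the ``arrow'' condition (item 3 of the definition) holds, perhaps after re-choosing representatives. If $\alpha \cup \beta$ is a non-reductive ideal edge, we are done via the first alternative of the conclusion.

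Otherwise, the aim is to construct a good polygon by linking $\alpha$ to $\beta$ through intermediate size-two ideal edges. Since every ideal edge in sight is based at $v$, compatibility of each size-three midpoint $\sigma_{i,i+1}$ with its size-two endpoints $\sigma_i, \sigma_{i+1}$ must be realized by containment $\sigma_i, \sigma_{i+1} \subset \sigma_{i,i+1}$. I would search first for a single intermediate size-two $\gamma$ (giving the triangle, with midpoints $\alpha \cup \gamma$, $\gamma \cup \beta$, and $\alpha \cup \beta$); failing this, a second intermediate (rectangle); and only when further obstructions arise, a third pair (hexagon). The shape of the polygon records how many additional direction orbits one is forced to draft into the argument in order to route around candidates that either fail to be genuine ideal edges or fail to be strictly norm-increasing.

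The principal obstacle will be controlling reductivity around the polygon. Via (\ref{eqnstar}), reductivity of an ideal edge $\sigma$ amounts to an inequality $|\sigma| \le |e|$ in terms of dot products in the star graph; the hypotheses supply one potentially reductive size-two (namely $\alpha$), and the definition of good polygon affords a budget of one reductive size-two and one reductive size-three, so every other ideal edge built into the polygon must be shown to be strictly non-reductive. Producing a good polygon therefore reduces to a combinatorial search within $D_v$, using \Cref{absolutenonzero} to keep absolute values of edges and ideal edges nonzero, and using the hypotheses $\en(F) \ge 3$ and $F \ne A_1 * A_2 * \mathbb{Z}$ to guarantee enough distinct direction orbits to place the intermediate $\gamma$'s (and their containing size-three midpoints) so as to exhaust the reductivity budget without exceeding it.
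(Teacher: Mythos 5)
Your outline aligns at the broadest level with the paper's approach --- work inside the star graph at the unique active vertex, join $\alpha$ to $\beta$ through intermediate size-two ideal edges with size-three ``midpoints,'' and control reductivity by dot-product arithmetic --- but what you have written is a plan of attack rather than a proof, and it elides the structural feature around which the paper's argument is actually organized.

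The paper splits into three cases according to the relationship between $\alpha = \{u,v\}$ and $\beta$ at the single active vertex: (1) $\beta=\{u,w\}$ where $\{u,v,w\}$ represents a trio of three distinct unoriented edges; (2) $\beta = \{u,\bar v\}$, so $v$ and $\bar v$ share an unoriented edge; (3) $\beta = \{u,g.v\}$ for a nontrivial $g\in\mathcal{G}_\star$. Your normalization ``arrange $\alpha$ and $\beta$ to share one direction and take $\alpha\cup\beta$ as the candidate size-three ideal edge'' only covers cases (1) and (2). In case (3), which is unavoidable whenever $\alpha$ and $\beta$ have the same orbit-support $\{e_1,e_2\}$, no choice of representatives makes $\alpha\cup\beta$ an ideal edge --- it necessarily contains two directions in the same $\mathcal{G}_\star$-orbit --- so the ``first alternative'' of the lemma is vacuous and one is forced directly into a polygon construction. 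You flag the orbit condition as ``something to verify'' but do not acknowledge that its failure is a robust case (not a boundary degeneracy) requiring a genuinely different plan: the paper handles it by choosing a third direction $w$ (a trio if $F \ne A_1 * F_2$, otherwise $w\in\{\bar u,\bar v\}$), building rectangles or hexagons out of $\{u,w\}$-, $\{v,w\}$- and $\{\bar u,\bar v\}$-type edges, and exhausting a large number of subcases.

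Beyond this, the core of the proof is the quantitative bookkeeping, and your sketch does not engage with it. ``Producing a good polygon therefore reduces to a combinatorial search'' names the problem without solving it: the definition of a good polygon allows at most one reductive size-two vertex (namely $\alpha$) and at most one reductive size-three midpoint, and the paper spends several pages establishing, via \Cref{whenreductive}, \Cref{tworeductive}, \Cref{onereductivesizetwo}, \Cref{reallytworeductive} and \Cref{threedirections}, that in each configuration the budget cannot be exceeded --- typically by deriving that otherwise some set of orbits would form an isolated component of the star graph, contradicting \Cref{absolutenonzero}, or that equalities forced by reductivity are mutually incompatible. None of those inequalities appear in your writeup, so the ``search'' has no mechanism to terminate successfully, and the lemma is not established.
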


With this lemma in hand,
we complete the proof of \Cref{mainconnectivitylemma}.
\begin{proof}[Proof of \Cref{mainconnectivitylemma}]
  We begin with our given Whitehead moves $(\alpha,e)$ and $(\beta,f)$
  as in the statement.
  By \Cref{oneactivevertexlemma} and \Cref{compatiblepaths},
  if $\tau$ has more than one active vertex,
  then there is a standard path from $\tau^\alpha_e$ to $\tau^\beta_f$
  with only briar patches of norm strictly greater than $\|\tau\|$ in its interior.

  If instead $\tau$ has only one active vertex,
  then by \Cref{mainsizetwolemma},
  we may replace $\alpha$ and $\beta$ by ideal edges of size two,
  call them $\alpha'$ and $\beta'$
  that support Whitehead moves
  $(\alpha',e')$ and $(\beta',f')$
  satisfying $|\alpha'| \ge |e'|$ and $|\beta'| > |f|$.
  Then by \Cref{maincompatibilitylemma},
  there is a good polygon in the star of $\tau$.
  Since there is at most one vertex of the polygon 
  (aside from the vertex $\tau^\alpha$)
  corresponding to a reductive ideal edge $\sigma$,
  a path around the polygon from $\tau^{\alpha'}$ to $\tau^{\beta'}$
  that avoids that vertex $\tau^\sigma$
  gives a sequence of ideal edges
  $\alpha' = \gamma_1,\ldots,\gamma_m = \beta'$
  such that each $\gamma_i$ is compatible with its neighbors
  and supports a strictly increasing Whitehead move.
  By \Cref{compatiblepaths}, we are done.
\end{proof}

The remainder of this section is given to proving
the statements left unproved above:
the main lemma left is \Cref{maincompatibilitylemma},
but we also need to prove
\Cref{sizetwoexistence}, and
\Cref{mainsizetwolemma}.

\paragraph{Reduction to one active vertex.}
We turn first to \Cref{sizetwoexistence},
for which our main tool is the following lemma, due to Vogtmann.
We give a proof for completeness.

\begin{lem}[Lemma 3.2 of~\cite{Vogtmann}]\label{threedirections}
  Let $u$, $v$ and $w$ be vertices of a simplicial graph
  (for example the star graph of $\tau$)
  and $S$ the set of other vertices.
  If $|\{u,v\}| \le |v|$ and $|\{u,w\}| \le |w|$,
  then $\{u\} \cdot \{v\} = \{u\}\cdot \{w\}$
  and $\{u\}\cdot S = 0$.
\end{lem}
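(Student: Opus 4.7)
The plan is to prove this by a direct computation that partitions the edges incident to $\{u,v\}$ and $\{u,w\}$ into the contributions coming from each of $u$, $v$, $w$, and $S$, and then extracts the desired conclusion by addition.

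First I would expand all four quantities appearing in the hypotheses into dot products over the partition $\{u\} \sqcup \{v\} \sqcup \{w\} \sqcup S$ of the vertex set of the graph. Since no edge counted in $|\{u,v\}|$ can have both endpoints in $\{u,v\}$, we get
\[
  |\{u,v\}| = \{u\}\cdot\{w\} + \{u\}\cdot S + \{v\}\cdot\{w\} + \{v\}\cdot S,
\]
while
\[
  |v| = \{v\}\cdot\{u\} + \{v\}\cdot\{w\} + \{v\}\cdot S.
\]
Subtracting, the hypothesis $|\{u,v\}| \le |v|$ becomes the clean inequality
\[
  \{u\}\cdot\{w\} + \{u\}\cdot S \le \{u\}\cdot\{v\}.
\]
By the symmetric argument, $|\{u,w\}| \le |w|$ is equivalent to
\[
  \{u\}\cdot\{v\} + \{u\}\cdot S \le \{u\}\cdot\{w\}.
\]

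Next I would simply add these two inequalities. The $\{u\}\cdot\{v\}$ and $\{u\}\cdot\{w\}$ terms cancel on each side, leaving $2\{u\}\cdot S \le 0$, which forces $\{u\}\cdot S = 0$ since the dot product is a nonnegative integer. Substituting $\{u\}\cdot S = 0$ back into either of the two inequalities then gives $\{u\}\cdot\{w\} \le \{u\}\cdot\{v\}$ and $\{u\}\cdot\{v\} \le \{u\}\cdot\{w\}$, whence equality.

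There is no real obstacle here: the proof is a two-step bookkeeping argument, and the only thing to be careful about is correctly counting the edges with both endpoints inside $\{u,v\}$ (namely those between $u$ and $v$) so that they are not mistakenly included in $|\{u,v\}|$. Because the graph in the motivating example is the star graph of $\tau$ and so has no loops (by the discussion following the definition of the star graph), we need not worry about loop edges at $u$, $v$, or $w$ contributing to any of these quantities.
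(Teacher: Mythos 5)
Your proof is correct and follows essentially the same route as the paper: both expand $|\{u,v\}|$, $|v|$, $|\{u,w\}|$, $|w|$ into dot products over the partition $\{u\}\sqcup\{v\}\sqcup\{w\}\sqcup S$, derive the two inequalities $\{u\}\cdot\{w\} + \{u\}\cdot S \le \{u\}\cdot\{v\}$ and $\{u\}\cdot\{v\} + \{u\}\cdot S \le \{u\}\cdot\{w\}$, and conclude. Your explicit "add the two inequalities to cancel and force $2\{u\}\cdot S \le 0$" step is exactly the implicit final step in the paper's argument, just spelled out; the remark about loop edges is unnecessary since a simplicial graph has none by definition, but it does no harm.
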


\begin{proof}
  It is clear that if two sets $S$ and $T$ of vertices of a graph are disjoint,
  then for any set $A$ of vertices, we have
  \[
    A\cdot(S\cup T) = A \cdot S + A \cdot T.
  \]
  Writing $S$ for the set of other vertices as in the statement,
  we have
  \[
    |\{u,v\}| = \{u\} \cdot S + \{u\}\cdot \{w\} + \{v\}\cdot \{w\} + \{v\}\cdot S
  \]
  and
  \[
    |v| = \{v\}\cdot S + \{v\} \cdot \{u\} + \{v\}\cdot \{w\}.
  \]
  Our first inequality implies that
  \[
    \{u\}\cdot S + \{u\}\cdot \{w\} \le \{u\}\cdot \{v\}.
  \]
  Calculating with $w$ in place of $v$,
  we see that
  \[
    \{u\}\cdot S + \{u\}\cdot \{v\} \le \{u\}\cdot \{w\}.
  \]
  Therefore we conclude that $\{u\}\cdot S = 0$
  and $\{u\}\cdot \{v\} = \{u\}\cdot \{w\}$.
\end{proof}

Using \Cref{threedirections}, we prove \Cref{sizetwoexistence}.

\begin{lem}\label{sizetwoexistence}
  Let $\tau$ be a briar patch and suppose that $n \ge 2$.
  Let $v$ be a vertex of $\tau$ of valence at least two.
  There is a strictly increasing Whitehead move of size two based at $v$.
\end{lem}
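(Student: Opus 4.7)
The plan is a contradiction argument driven by \Cref{threedirections}. Assume for contradiction that every size-two Whitehead move based at $v$ is reductive; equivalently, $|\alpha| \le |d|$ for every size-two ideal edge $\alpha$ at $v$ and every $d \in D(\alpha)$. The goal is to exhibit a direction $u$ together with a specific neighbor in the star graph $\Gamma_v$ which the reductivity assumption forces to be disconnected.

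I would use the ``moreover'' clause of \Cref{absolutenonzero} (available because $n \ge 2$) to pick a distinguished edge $e$ at $v$ such that every pair of directions in $D_v$ with underlying edge $e$ is joined in $\Gamma_v$. Choose $u \in D_v$ with underlying edge $e$. Because every vertex of a reduced briar patch (with $n \ge 1$) has nontrivial vertex group, $|\mathcal{G}_v| \ge 2$, so there is another direction $u' \ne u$ with underlying edge $e$, and $u, u'$ are joined by an edge of $\Gamma_v$.

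The core of the contradiction is the following. For any two distinct $y, z \in D_v$ whose underlying edges differ from $e$ and such that $\{u, y\}$ and $\{u, z\}$ are size-two ideal edges with $y \in D(\{u, y\})$ and $z \in D(\{u, z\})$, the reductivity hypothesis yields $|\{u, y\}| \le |y|$ and $|\{u, z\}| \le |z|$; \Cref{threedirections} then confines the neighbors of $u$ in $\Gamma_v$ to $\{y, z\}$. Since $u'$'s underlying edge is $e$, while the underlying edges of $y$ and $z$ are not, $u' \notin \{y, z\}$, contradicting the $\Gamma_v$-edge joining $u$ and $u'$.

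What remains, and what I expect to be the main obstacle, is producing the two witnesses $y, z$. The directions with underlying edge different from $e$ number $|\mathcal{G}_v|(\mathrm{val}(v) - 1) \ge 2$, and the membership condition $y \in D(\{u, y\})$ requires $y$'s underlying edge to be neither a loop at $v$ nor $\bar e$. When $\mathrm{val}(v) \ge 3$ there is ample room to choose $y, z$ avoiding these obstructions. When $\mathrm{val}(v) = 2$, the only way the condition could fail is if the other edge at $v$ is a loop, but this would force the entire graph of groups to be a single loop at $v$ and hence $F = \mathcal{G}_v * \mathbb{Z}$, incompatible with $n \ge 2$.
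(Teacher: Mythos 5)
Your proof is correct and follows the same route as the paper: pick directions $d, d'$ on the distinguished edge $e$ guaranteed by the ``moreover'' clause of \Cref{absolutenonzero}, choose two size-two ideal edges $\{d,c\}, \{d,c'\}$ with $c, c'$ off $e$ and $\bar e$, and apply \Cref{threedirections} to show that reductivity of both would isolate $d$ from $d'$ in the star graph. The only slight inaccuracy is the claim that $y$'s underlying edge must not be a loop for $y \in D(\{u,y\})$; in fact a loop edge is fine as long as it is not $e$ or $\bar e$, and this does not affect your argument since you correctly rule out the one genuinely problematic configuration (a valence-two vertex whose edge is a loop) using $n \ge 2$.
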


\begin{proof}
  Since $n \ge 2$, by \Cref{absolutenonzero},
  the star graph of $\tau$ contains an edge betwen a pair of directions
  $d$ and $d'$ in $D_v$ with the same underlying oriented edge $e$.
  Since $v$ has valence at least two,
  there are a pair of ideal edges $\alpha = \{d,c\}$ and $\alpha' = \{d,c'\}$
  where $c$ and $c'$ are distinct and 
  have underlying oriented edges different from $e$ and $\bar e$.
  If both of the Whitehead moves $(\alpha,c)$ and $(\alpha',c')$ are reductive,
  then by \Cref{threedirections},
  we have $d \cdot S = 0$,
  where $S$ denotes the set of remaining vertices in the star graph
  aside from $d$, $c$ and $c'$.
  This contradicts the existence of the edge from $d$ to $d'$.
\end{proof}

\paragraph{Finding increasing Whitehead moves of size two.}
We will now turn to the proof of \Cref{mainsizetwolemma}.
Following Vogtmann, define a \emph{trio} to be a set $\{e,f,g\}$
of three oriented edges with common initial vertex $\star$
of a briar patch $\tau$ which does not contain an edge twice,
once in each orientation.
A trio has the property that $(\{d,d'\},d)$
is an elementary Whitehead move for any choice of directions $d$ and $d'$
representing edges of the trio.
We say the trio \emph{supports} these Whitehead moves.

Vogtmann proves the following lemma using \Cref{threedirections}.

\begin{lem}[Lemma 3.3 of~\cite{Vogtmann}]\label{triosincreasing}
  For every choice of three directions $\{u,v,w\}$ representing
   the oriented edges of the trio $\{e,f,g\}$,
   at least one of the three size-two ideal edges
   $\{u,v\}$, $\{u,w\}$ and $\{v,w\}$
   supports a strictly increasing Whitehead move.
\end{lem}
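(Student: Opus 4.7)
The plan is to argue by contradiction, using the dot-product identities of \Cref{threedirections} together with the connectivity of the star graph established in \Cref{absolutenonzero}.

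Suppose none of $\{u,v\}$, $\{u,w\}$, $\{v,w\}$ supports a strictly increasing Whitehead move. The trio condition (no edge appears in both orientations) ensures that each of $u,v,w$ lies in $D(\sigma)$ for every one of these three ideal edges that contains it, so the six Whitehead moves obtained by pairing each ideal edge with each of its two directions are all valid. The hypothesis then yields the six inequalities
\[
  |\{u,v\}| \le |u|,\ |v|, \qquad |\{u,w\}| \le |u|,\ |w|, \qquad |\{v,w\}| \le |v|,\ |w|.
\]
I would then apply \Cref{threedirections} three times, placing each of $u$, $v$, $w$ in turn in the role of the central vertex, to conclude that $\{u\}\cdot S = \{v\}\cdot S = \{w\}\cdot S = 0$, where $S$ denotes the vertices of the star graph outside $\{u,v,w\}$. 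In other words, no edge of the star graph joins $\{u,v,w\}$ to its complement.

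Next, \Cref{absolutenonzero} gives $|u| > 0$, so $u$ has at least one neighbor in the star graph, necessarily lying in $\{v,w\}$. Symmetrically, $\{u,v,w\}$ lies in one connected component of the star graph, and by the orthogonality above this component is exactly $\{u,v,w\}$. Since \Cref{absolutenonzero} also guarantees that the star graph has exactly one connected component for each vertex of $\tau$, the $\star$-component is $\{u,v,w\}$, so $D_\star = \{u,v,w\}$ and hence $|\mathcal{G}_\star|\cdot|\st(\star)| = 3$.

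Because $\star$ is the initial vertex of the three distinct edges of the trio, $|\st(\star)| \ge 3$, forcing $|\st(\star)| = 3$ and $|\mathcal{G}_\star| = 1$. This contradicts the observation made in the paragraph on ideal edges that, because $n \ge 1$ and $\tau$ is reduced, every vertex of $\tau$ has nontrivial vertex group. The main technical step is arranging the symmetric triple application of \Cref{threedirections}; the final contradiction is then almost automatic, since once $\mathcal{G}_\star$ is nontrivial the three directions $u,v,w$ cannot exhaust $D_\star$.
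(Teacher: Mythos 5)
Your proof is correct and follows essentially the same approach as the paper: assume all six Whitehead moves are reductive, apply \Cref{threedirections} three times to isolate $\{u,v,w\}$ as a component of the star graph, and contradict \Cref{absolutenonzero}. The paper states the contradiction tersely, while you fill in the precise mechanism (the one-component-per-vertex condition forces $D_\star = \{u,v,w\}$, which together with $|\st(\star)| \ge 3$ from the trio forces $\mathcal{G}_\star$ to be trivial, contradicting the standing assumption that vertex groups are nontrivial when $n \ge 1$ and $\tau$ is reduced); this elaboration is accurate and matches the intent of the cited lemma.
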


\begin{proof}
  Suppose all Whitehead moves supported by the ideal edges
  in the statement are reductive.
  Repeatedly applying \Cref{threedirections},
  we see that the directions $u$, $v$ and $w$
  form an isolated component of the star graph.
  This contradicts \Cref{absolutenonzero}.
\end{proof}

%
%

\begin{lem}\label{efbarflemma}
  Suppose $(\alpha,e)$ is a Whitehead move
  satisfying $|\alpha| \ge |e|$.
  If the underlying oriented edges of $\alpha$ 
  are of the form $\{e,f,\bar f\}$,
  and we write $\alpha = \{u,v,\bar v\}$,
  then one of the size-two Whitehead moves
  $\{u,v\}$ or $\{u,\bar v\}$ supports
  a strictly increasing Whitehead move.
\end{lem}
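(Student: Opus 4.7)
The plan is to argue by contradiction: suppose neither $\{u,v\}$ nor $\{u,\bar v\}$ supports a strictly increasing Whitehead move, so all four inequalities
\[
  |\{u,v\}|,\ |\{u,\bar v\}| \le |e| \quad \text{and} \quad |\{u,v\}|,\ |\{u,\bar v\}| \le |f|
\]
hold (using $|v|=|\bar v|=|f|$). Let $\star$ denote the common base vertex and write $S$ for the set of star-graph vertices other than $u, v, \bar v$. First I would apply \Cref{threedirections} to the triple $u,v,\bar v$ with the two $|f|$-inequalities, obtaining $\{u\}\cdot\{v\}=\{u\}\cdot\{\bar v\}$ and $\{u\}\cdot S=0$.

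Next, a direct expansion of each absolute value as a sum of dot products yields the identity
\[
  |\{u,v\}|+|\{u,\bar v\}|=|\alpha|+|e|+2\{v\}\cdot\{\bar v\}.
\]
Combining this with $|\{u,v\}|,|\{u,\bar v\}|\le|e|$ and the hypothesis $|\alpha|\ge|e|$ forces $\{v\}\cdot\{\bar v\}=0$ and $|\alpha|=|e|$; unwinding the remaining equalities also gives $|e|=|f|$. In other words, the only star-graph neighbors of $u$ at $\star$ are the specific directions $v$ and $\bar v$, and $v$ and $\bar v$ themselves are not joined by any edge.

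The last step is to contradict this configuration using \Cref{absolutenonzero} together with the explicit form of $W$. Because $|e|\ne 0$, some $\gamma_w$ for $w\in W$ traverses $e$; because $\{u\}\cdot S=0$, every star-graph edge incident to $u$ must terminate at $v$ or $\bar v$, whose underlying edges are $f$ and $\bar f$. The idea is to exhibit a specific turn of some $\gamma_w$ at $\star$ that produces a star-graph edge from $u$ to a direction in $S$: for example, when $e$ is a loop at $\star$, the direction $(1,\bar e)$ lies in $S$ and the wrap-around turn of a $\gamma_{s_i}$ whose cyclically reduced representative meets the loop $e$ supplies the offending edge. The main obstacle is this final step: one must show that no matter what conjugacy class in $F$ is represented by $e$ under the marking of $\tau$, at least one element of $W$---chosen from among $s_i$, $s_i a_j$, $a_j s_i$, $s_i s_j^{\pm 1}$, or (when $n\ge 2$) $a_i a_j$---has a cyclically reduced representative in $\tau$ whose turns at $\star$ force a star-graph edge from $u$ to a direction in $S$, contradicting $\{u\}\cdot S=0$.
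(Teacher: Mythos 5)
Your algebraic reduction is correct and in fact tighter than the paper's. Applying \Cref{threedirections} with the two $|f|$-inequalities $|\{u,v\}|\le|v|$ and $|\{u,\bar v\}|\le|\bar v|$ gives $\{u\}\cdot S=0$ and $\{u\}\cdot\{v\}=\{u\}\cdot\{\bar v\}$ exactly as in the paper. Your identity $|\{u,v\}|+|\{u,\bar v\}|=|\alpha|+|e|+2\{v\}\cdot\{\bar v\}$ is valid, and combined with $|\{u,v\}|,|\{u,\bar v\}|\le|e|$ and $|\alpha|\ge|e|$ it forces $\{v\}\cdot\{\bar v\}=0$, $|\alpha|=|e|$, and (unwinding with $\{v\}\cdot S=\{\bar v\}\cdot S$) also $|e|=|f|=2\{u\}\cdot\{v\}$. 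This is precisely the configuration the paper derives, by a slightly longer route.

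The gap is the one you flag yourself. Your plan for the contradiction --- to exhibit a particular $w\in W$ whose cyclically reduced path at $\star$ has a turn from a translate of $u$ to a direction in $S$, directly violating $\{u\}\cdot S=0$ --- is not a route that closes: the constraint $\{u\}\cdot S=0$ alone is not incompatible with the choice of $W$, and which element of $W$ produces an ``offending'' turn depends on the marking in a way you have no handle on. What is needed is a mechanism that uses \emph{all} of the constraints you have extracted simultaneously, and the paper does this globally rather than turn by turn. Since every star-graph edge at any $\mathcal{G}_\star$-translate of $u$ lands on a direction with underlying edge $f$ or $\bar f$, every turn taken by a $\gamma_w$ at $\star$ that involves the edge $e$ also involves $f$. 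The paper then splits on whether $e$ is a loop at $\star$. If it is, the equality $|e|=|f|$ forces each $w\in W$ to cross $e$ and $f$ equally often; if it is not, each $\gamma_w$ crosses $e$ an even number of times and hence $f$ an even number of times. Either way, the image of every $w\in W$ under the projection $H_1(F;\mathbb{Z}/2\mathbb{Z})\to(\mathbb{Z}/2\mathbb{Z})^2$ recording crossings of $e$ and $f$ lies in $\{(0,0),(1,1)\}$, a proper subspace. This contradicts the explicit construction of $W$ (which contains the $s_i$, $s_is_j^{\pm1}$, $s_ia_j$, and when $n\ge2$ the $a_ia_j$, and hence surjects onto any such rank-two quotient of $H_1(F;\mathbb{Z}/2\mathbb{Z})$). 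This mod-$2$ homological parity argument is the idea your sketch is missing; without it the proof is not complete.
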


\begin{proof}
  Suppose towards a contradiction
  that the ideal edges $\{u,v\}$ and $\{u,\bar v\}$ are reductive.
  Applying \Cref{threedirections},
  writing $S$ for the set of directions not equal to $u$, $v$ or $\bar v$,
  we see that
  $\{u\}\cdot S = 0$ and $\{u\} \cdot \{v\} = \{u\} \cdot \{\bar v\}$.
  Since 
  \[
    |v|  = \{v\} \cdot \{u\} + \{v\}\cdot \{\bar v\}
    + \{v\} \cdot S = |\bar v|,
  \]
  the latter equality implies that $\{v\}\cdot S = \{\bar v\} \cdot S$.
  We have by assumption $|\{u,v,\bar v\}| \ge |u|$,
  which implies that $\{v\} \cdot S \ge \{u\}\cdot \{v\}$.
  Now,
  \[
    |\{u,v\}| = \{u\}\cdot \{\bar v\} + \{v\}\cdot S
    \quad\text{and}\quad
    |\{u\}| = \{u\}\cdot \{v\} + \{u\}\cdot \{\bar v\},
  \]
  so since we assume $\{u,v\}$ is reductive,
  we conclude that $\{v\}\cdot \{\bar v\} = 0$
  and that $\{v\}\cdot S = \{u\}\cdot \{v\}$.
  In particular $|v| = |u|$.

  Now, either $e$, the underlying oriented edge of $u$,
  forms a loop, or it does not.
  If it does, because $\{u\}\cdot S = 0$,
  by $\mathcal{G}_\star$-equivariance of the star graph,
  every occurence of $e$ (in either orientation)
  in each $w \in W$ is followed or preceded by an occurence of $f$,
  the underlying oriented edge of $v$.
  Since $|e| = |f|$,
  each word has an equal number of occurrences of $e$ and $f$.
  Therefore, if we project to the subspace of
  $H_1(F,\mathbb{Z}/2\mathbb{Z})$ spanned by $e$ and $f$,
  we see that each word $w \in W$
  has image $(0,0)$ or $(1,1)$.
  Since these words do not form a basis for 
  $\mathbb{Z}/2\mathbb{Z} \oplus \mathbb{Z}/2\mathbb{Z}$,
  this contradicts our assumption that the words $w \in W$
  satisfy the hypotheses of \Cref{absolutenonzero}.

  Supposing instead that $e$ does not form a loop,
  then every $w \in W$ contains an even number of occurrences of $e$
  and therefore an even number of occurences of $f$,
  which is again a contradiction mod $2$.
\end{proof}

We are now ready to prove \Cref{mainsizetwolemma}.

\begin{proof}[Proof of \Cref{mainsizetwolemma}]
  Note that if an ideal edge $\alpha$ does not have size two,
  then it contains a trio
  unless its underlying edges are of the form $\{e,f,\bar f\}$.
  If it contains a trio, then $\alpha$
  is compatible with a size-two ideal edge 
  supporting a strictly increasing Whitehead move
  by \Cref{triosincreasing},
  and if not the same holds by \Cref{efbarflemma}.
\end{proof}

We need the following elementary observation.

\begin{lem}\label{whenreductive}
  Suppose $S$ is a nonempty subset of the vertices of a simplicial graph
  containing the vertex $x$.
  We have that $|S| \le |x|$ if and only if we have that
  \[
    \{x\} \cdot T \ge \frac{1}{2} |T|,
  \]
  where $T$ is the set $S - \{x\}$.
  In particular equality holds in one equation
  if and only if it holds in the other.
\end{lem}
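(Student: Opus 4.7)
The statement is a straightforward bookkeeping identity about the dot product on subsets of vertices, so the plan is just to set up the right decomposition and compute. Write $V$ for the vertex set of the graph, and decompose $V$ as the disjoint union $\{x\} \sqcup T \sqcup (V - S)$, since $S = \{x\} \sqcup T$ by hypothesis. The plan is to express both $|x|$ and $|S|$ in terms of the three cross dot products $\{x\}\cdot T$, $\{x\}\cdot (V-S)$, and $T\cdot(V-S)$, and then to eliminate the two terms involving $V - S$ by using the defining expression for $|T|$.

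Concretely, the first step is to write
\[
  |x| = \{x\} \cdot (V - \{x\}) = \{x\}\cdot T + \{x\}\cdot (V - S)
\]
and
\[
  |S| = S\cdot (V - S) = \{x\}\cdot (V - S) + T\cdot (V - S),
\]
using bilinearity of the dot product over disjoint unions. Subtracting gives
\[
  |x| - |S| = \{x\}\cdot T - T\cdot (V - S).
\]
The second step is to compute $|T| = T\cdot (V - T)$ by the dual decomposition $V - T = \{x\} \sqcup (V - S)$, yielding $|T| = \{x\}\cdot T + T\cdot (V - S)$, so that $T\cdot (V - S) = |T| - \{x\}\cdot T$. Substituting, we get the clean identity
\[
  |x| - |S| = 2\{x\}\cdot T - |T|.
\]

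From this identity both claims follow immediately: $|S| \le |x|$ is equivalent to $2\{x\}\cdot T \ge |T|$, i.e.\ to $\{x\}\cdot T \ge \tfrac{1}{2}|T|$, and since we have an exact equation, strict inequality transfers in either direction and equality holds in one inequality if and only if it holds in the other. There is no real obstacle here; the only thing to be careful about is to make sure the three-way decomposition $V = \{x\} \sqcup T \sqcup (V - S)$ is indeed disjoint, which follows from $x \in S$ and $T = S - \{x\}$, and to note that the case $T = \emptyset$ (where $S = \{x\}$) is handled trivially since then both sides of the desired equivalence reduce to $|x| \le |x|$ and $0 \ge 0$.
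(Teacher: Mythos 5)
Your proof is correct and takes essentially the same approach as the paper: both expand $|x|$ and $|S|$ via the disjoint decomposition of the vertex set determined by $\{x\}$, $T$, and the complement of $S$, then eliminate the complement terms using the expression for $|T|$. The paper simply leaves the final algebra implicit where you write it out.
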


\begin{proof}
  Write $T^c$ for the complement of $T$.
  On the one hand, we have $|x| = \{x\} \cdot T + \{x\} \cdot T^c$,
  and on the other, we have
  \[
    |S| = \{x\} \cdot T^c + T \cdot (T^c - \{x\}).
  \]
  A bit of algebra finishes the proof.
\end{proof}

\begin{cor}\label{tworeductive}
  Suppose $T$ is a nonempty subset of the vertices of a simplicial graph.
  There are at most two choices of a vertex $x \notin T$
  such that $|T \cup \{x\}| \le |x|$.
  If there are two choices, say $x$ and $x'$, then
  we have $|T \cup \{x\}| = |x|$ and $|T \cup \{x'\}| = |x'|$
  and in fact elements of $T$ are only connected to each other, $x$ and $x'$.
\end{cor}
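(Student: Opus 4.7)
The plan is to translate the hypothesis using \Cref{whenreductive} and then apply a straightforward edge-counting argument on the cut between $T$ and its complement.

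First I would apply \Cref{whenreductive} with $S = T \cup \{x\}$ for each candidate $x \notin T$: the condition $|T \cup \{x\}| \le |x|$ becomes equivalent to $\{x\}\cdot T \ge \tfrac{1}{2}|T|$, with equality on one side iff equality on the other. So the corollary reduces to showing that at most two vertices of $T^c$ have dot product with $T$ at least $\tfrac{1}{2}|T|$.

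Next I would invoke the elementary identity
\[
  \sum_{y \in T^c} \{y\}\cdot T = |T|,
\]
which just counts each edge from $T$ to $T^c$ exactly once by its unique endpoint outside $T$. Assuming $|T| > 0$ (the case $|T| = 0$ being degenerate and outside the intended scope, and guaranteed in applications by \Cref{absolutenonzero}), if three distinct vertices $x_1, x_2, x_3 \notin T$ each satisfied the hypothesis, their contributions would give $\sum_i \{x_i\}\cdot T \ge \tfrac{3}{2}|T|$, contradicting the identity.

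For the equality statement, if two such vertices $x$ and $x'$ exist, then $\{x\}\cdot T + \{x'\}\cdot T \ge |T|$ from the hypothesis and $\le |T|$ from the identity, forcing equality. Since each summand is at least $\tfrac{1}{2}|T|$ and they sum to $|T|$, each equals exactly $\tfrac{1}{2}|T|$; the equality clause of \Cref{whenreductive} then yields $|T \cup \{x\}| = |x|$ and $|T \cup \{x'\}| = |x'|$. Saturation of the identity further means every edge from $T$ to $T^c$ lands in $\{x, x'\}$, which is precisely the last assertion that elements of $T$ are connected only to each other, $x$, and $x'$. I do not expect a serious obstacle here; the whole corollary is bookkeeping once \Cref{whenreductive} is in hand.
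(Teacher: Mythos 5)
Your proposal is correct and follows essentially the same route as the paper: translate via \Cref{whenreductive} to the condition $\{x\}\cdot T \ge \tfrac{1}{2}|T|$, then use the fact that $\sum_{y\in T^c}\{y\}\cdot T = |T|$ (which the paper uses implicitly in writing $|T| \le \{x\}\cdot T + \{x'\}\cdot T \le |T|$). Your version is slightly more spelled out, and you correctly flag the hidden hypothesis $|T| > 0$: if $|T| = 0$ then every $x\notin T$ trivially satisfies $|T\cup\{x\}| = |x|$, so ``at most two'' fails as a bare graph-theoretic statement; the corollary is rescued in all applications by \Cref{absolutenonzero}. The paper's proof does not remark on this degenerate case, so your observation is a small but genuine improvement in precision.
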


\begin{proof}
  By \Cref{whenreductive},
  We have that $|T \cup \{x\}| \le |x|$
  if and only if $\{x\} \cdot T \ge \frac{1}{2} |T|$.
  If this holds for another $x' \notin T$,
  notice that we then have
  \[
    |T| \le \{x\} \cdot T + \{x'\} \cdot T \le |T|.
  \]
  Therefore equality holds throughout and the statement follows.
\end{proof}

For size-two ideal edges, we have the following strengthening.

\begin{cor}\label{onereductivesizetwo}
  Suppose $u$ and $v$ are directions based at a vertex $\star$
  in a briar patch $\tau$
  with valence at least three.
  Suppose that $\star$ has nontrivial vertex group $\mathcal{G}_\star$,
  and that $\{u,v\}$ is an ideal edge.
  There is at most one $h \in \mathcal{G}_\star$
  such that $\{u, h.v\} \sim \{h^{-1}.u,v\}$ is reductive.
\end{cor}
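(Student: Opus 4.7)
The plan is to apply \Cref{tworeductive} with $T = \{u\}$, letting $x$ range over the $\mathcal{G}_\star$-orbit of $v$. Since $\{u,v\}$ is an ideal edge, $u$ and $v$ lie in different $\mathcal{G}_\star$-orbits, and hence $\{u, h.v\}$ is again a size-two ideal edge for every $h \in \mathcal{G}_\star$. Reductivity of $\{u, h.v\}$ implies in particular that $|\{u,h.v\}| \le |h.v|$, which is precisely the hypothesis of \Cref{tworeductive}. That corollary already yields at most two such $h$, so the task reduces to ruling out the case of exactly two.

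Suppose for contradiction that $h_1 \neq h_2$ both make $\{u, h_i.v\}$ reductive. By the equality clause of \Cref{tworeductive}, the direction $u$ is adjacent in the star graph of $\tau$ only to the vertices $h_1.v$ and $h_2.v$. Because the star graph is $\mathcal{G}_\star$-equivariant, translating by any $g \in \mathcal{G}_\star$ shows that $g.u$ is adjacent only to $gh_1.v$ and $gh_2.v$. Writing $U$ and $V$ for the $\mathcal{G}_\star$-orbits of $u$ and $v$ respectively, we conclude that no edge of the star graph leaves the set $U \cup V$.

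Now we invoke the hypothesis that $\star$ has valence at least three: it provides a third oriented edge at $\star$, hence a third $\mathcal{G}_\star$-orbit $W \subseteq D_\star$ disjoint from both $U$ and $V$. The nonempty sets $U$ and $W$ then lie in distinct connected components of the induced subgraph of the star graph on $D_\star$, contradicting the assertion of \Cref{absolutenonzero} that this subgraph is connected. The only subtle point is the $\mathcal{G}_\star$-equivariance of the star graph, but this is immediate from the definition, since turns taken by the cyclically reduced loops representing the elements $\sigma_\sharp(w)$ are defined as $\mathcal{G}_\star$-orbits of pairs of directions.
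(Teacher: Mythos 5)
Your proof follows the paper's first step exactly — applying \Cref{tworeductive} with $T=\{u\}$ — but it then leaps to a conclusion that this single application does not support. From $T=\{u\}$ you correctly deduce that $u$ is adjacent only to $h_1.v$ and $h_2.v$, and by $\mathcal{G}_\star$-equivariance that every element of the orbit $U$ has all its neighbors inside the orbit $V$. This establishes that no edge of the star graph joins $U$ to the complement of $V$. It does \emph{not} establish that ``no edge of the star graph leaves the set $U\cup V$'': an element of $V$ could perfectly well be adjacent to a direction in a third orbit $W$, and nothing you have said rules this out. Consequently a path $u \to v' \to w'$ (with $v' \in V$, $w' \in W$) could exist, and your claimed disconnection of $U$ from $W$ fails, so you do not yet contradict \Cref{absolutenonzero}.

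To close the gap you need the other half of the reductivity hypothesis. Reductivity of a size-two ideal edge $\{u,h.v\}$ means \emph{both} $|\{u,h.v\}|\le|h.v|$ and $|\{u,h.v\}|\le|u|$; you only used the first. Equivalently, since $\{u,h.v\}\sim\{h^{-1}.u,v\}$ and equivalent ideal edges have equal absolute value, reductivity of the two sets $\{u,h_1.v\}$ and $\{u,h_2.v\}$ gives, after translating, two reductive sets of the form $\{v,\cdot\}$. The paper's proof therefore applies \Cref{tworeductive} a second time with $T=\{v\}$ to conclude that $v$ (and by equivariance every element of $V$) is adjacent only to directions in $U$. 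Only with this symmetric statement do $U\cup V$ form an isolated component of the induced subgraph on $D_\star$, which then contradicts connectivity from \Cref{absolutenonzero} using the valence $\ge 3$ hypothesis. Add the second application and your argument becomes the paper's proof.
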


\begin{proof}
  By \Cref{tworeductive}
  applied to $T = \{u\}$,
  we see that there are at most two $h \in \mathcal{G}_\star$
  such that $\{u, h.v\}$ is reductive.
  Supposing that there are two,
  we may without loss of generality suppose that one is $v$
  and the other is $h.v$.
  \Cref{tworeductive} allows us to conclude that $u$ is connected only to
  $v$ and $h.v$ in the star graph.
  But applying \Cref{tworeductive} to $T = \{v\}$,
  we see that $v$ is only connected to $u$ and $h^{-1}.u$.
  It follows that the $\mathcal{G}_\star$-orbits of $u$ and $v$
  together form an isolated component of the star graph.
  This contradicts \Cref{absolutenonzero},
  since $\star$ has valence at least three.
\end{proof}

We also need the following elaboration of \Cref{tworeductive}.

\begin{lem}\label{reallytworeductive}
  Suppose $u,v,w,x,y$ are five distinct vertices of a simplicial graph.
  Suppose that 
  \[
    |\{u,v,w\}| \le \min\{|u|,|v|,|w|\},
    \quad\text{that}\quad
    |\{u,v\}| \ge \min\{|u|,|v|\}
    \quad\text{and}\quad
    |\{u,w\}| \ge \min\{|u|,|w|\}|,
  \]
  and that one of the above inequalities involving $\{u,v\}$ and $\{u,w\}$ is strict.
  If $|\{u,v,x\}| \le |x|$,
  then $|\{u,w,y\}| > |y|$.
\end{lem}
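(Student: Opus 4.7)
The plan is to argue by contrapositive: assume both $|\{u,v,x\}| \le |x|$ and $|\{u,w,y\}| \le |y|$, and derive a contradiction with the strictness hypothesis. The strategy is to show that under the contrapositive assumption together with $|\{u,v,w\}| \le \min\{|u|,|v|,|w|\}$, the two inequalities $|\{u,v\}| \ge \min\{|u|,|v|\}$ and $|\{u,w\}| \ge \min\{|u|,|w|\}$ must in fact both collapse to equalities.

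I would first apply \Cref{tworeductive} with $T = \{u,v\}$: the hypothesis gives $|\{u,v,w\}| \le |w|$, so $w$ is one reductive extension of $T$, and the first contrapositive assumption makes $x$ another; since $w \ne x$, the lemma concludes that $u$ and $v$ have no neighbors outside $\{u,v,w,x\}$ and that $w \cdot \{u,v\} = x \cdot \{u,v\} = \frac{1}{2}|\{u,v\}|$. Analogously, applying \Cref{tworeductive} to $T = \{u,w\}$ with the reductive extensions $v$ and $y$ shows that $u$ and $w$ have no neighbors outside $\{u,v,w,y\}$. Crossing these conclusions and using $x \ne y$, I obtain that $u$ is adjacent only to $v$ and $w$, so $|u| = u \cdot v + u \cdot w$.

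Next, using the additive identity $|S \cup T| = |S| + |T| - 2\, S \cdot T$ together with the two dot-product equalities above, a short calculation yields $|v| = |w| = |\{u,v,w\}| = u \cdot v + u \cdot w + 2\, v \cdot w$. The hypothesis $|\{u,v,w\}| \le |u|$ then forces $v \cdot w = 0$, and hence $|u| = |v| = |w| = u \cdot v + u \cdot w$. Substituting back, $|\{u,v\}|$ and $|\{u,w\}|$ become $2\, u \cdot w$ and $2\, u \cdot v$ respectively, so the two inequalities in the statement reduce to $u \cdot w \ge u \cdot v$ and $u \cdot v \ge u \cdot w$, forcing equality in both and contradicting the strictness hypothesis.

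I expect the third paragraph to be the main bookkeeping step, but the logical structure is entirely driven by \Cref{tworeductive}: once that lemma has been invoked twice and the neighborhoods of $u$, $v$, $w$ have been pinned down, the remainder is elementary arithmetic with the identity $|S \cup T| = |S| + |T| - 2\, S \cdot T$.
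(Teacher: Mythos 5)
Your proof is correct and follows essentially the same route as the paper: both arguments apply \Cref{tworeductive} twice (to $T=\{u,v\}$ with $w,x$ and to $T=\{u,w\}$ with $v,y$) to conclude that $u$ is adjacent only to $v$ and $w$ and that $|v|=|w|=|\{u,v,w\}|\le|u|$, and then contradict the strictness hypothesis. The only difference is cosmetic bookkeeping at the end: the paper translates $|\{u,v\}|\ge|v|$ and $|\{u,w\}|\ge|w|$ directly via \Cref{whenreductive} into $\{u\}\cdot\{v\}\le\tfrac12|u|$ and $\{u\}\cdot\{w\}\le\tfrac12|u|$ (one strict) and immediately collides with $|u|=\{u\}\cdot\{v,w\}$, whereas you derive the extra facts $v\cdot w=0$ and $|u|=|v|=|w|$ before reducing to $u\cdot v\ge u\cdot w\ge u\cdot v$.
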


\begin{proof}
  By two applications of \Cref{tworeductive},
  if there exists $y$ such that $|\{u,w,y\}| \le |y|$,
  we have $|w| = |\{u,v,w\}| = |v|$,
  and that $u$ is connected only to $v$ and $w$;
  that is, we have that $|u| = \{u\}\cdot\{v,w\}$.
  But notice that the first chain of equalities
  together with the inequalities in the statement implies that $|v| = |w| \le |u|$.
  So the inequalities in the statement
  imply by \Cref{whenreductive} that $\{u\}\cdot\{v\} \le \frac{1}{2}|u|$
  and that $\{u\}\cdot\{w\} \le \frac{1}{2}|u|$,
  and that one of the inequalities is strict.
  This is a contradiction.
\end{proof}

\paragraph{Finding good polygons: the proof of \Cref{maincompatibilitylemma}.}

For the remainder of this section,
we turn to the proof of \Cref{maincompatibilitylemma}.
Assuming $(\alpha,e)$ and $(\beta,f)$ are size-two Whitehead moves
satisfying $|\alpha| \ge |e|$ and $|\beta| > |f|$
which are not compatible,
there are three cases based on the configurations of $\alpha$ and $\beta$.
We briefly describe the cases.

\begin{enumerate}
  \item \textbf{Case one: $\alpha = \{u,v\}$ and $\beta = \{u,w\}$.}
    In our first case, the directions $\{u,v,w\}$
    represent a trio;
    that is, they are drawn from three distinct \emph{unoriented} edges.

  \item \textbf{Case two: $\alpha = \{u,v\}$ and $\beta = \{u,\bar v\}$.}
    In our second case, we have that $\{u,v,\bar v\}$ is an ideal edge,
    but the three directions $u$, $v$ and $\bar v$
    do not form a trio because $v$ and $\bar v$
    share an underlying \emph{unoriented} edge.

  \item \textbf{Case three: $\alpha = \{u,v\}$ and $\beta = \{u,v'\}$,}
    where $v' = g.v$ for some nontrivial $g \in \mathcal{G}_\star$.
    In our third case, the set of directions $\{u,v,v'\}$ is not an ideal edge.
\end{enumerate}

\paragraph{Case one.}
In this case $\alpha \cup \beta = \{u,v,w\}$ is an ideal edge.
If it is not reductive, we are done.

Supposing $\alpha \cup \beta$ is reductive,
the good polygon we will build is the triangle in \Cref{compatiblefig} left.
The ideal edge $\gamma = \gamma(g)$
will be of the form $\{v,g.w\} \sim \{g^{-1}.v,w\}$
for some nontrivial element $g \in \mathcal{G}_\star$.
Recall that in view of Vogtmann~\cite{Vogtmann},
we assume that $n \ge 1$.
We have the ideal edges $\alpha\cup\gamma(g) = \{u,v,g.w\}$
and $\beta\cup\gamma(g) = \{u,g^{-1}.v,w\}$.
By \Cref{reallytworeductive}, at most one of the size-three ideal edges
$\alpha\cup \gamma(g)$ and $\beta\cup\gamma(g)$ is reductive
as $g$ varies over the nontrivial elements of $\mathcal{G}_\star$.

First, observe that \emph{no} ideal edge of the form $\gamma(g)$ is reductive
when $g \in \mathcal{G}_\star$ is nontrivial.
Indeed, if some $\gamma(g)$ were reductive,
by \Cref{whenreductive}, we have
$\{v\}\cdot\{g.w\} \ge \frac{1}{2}|v|$.
Notice that
\[
  |v| \le \{v\}\cdot\{g.w\} + \{g^{-1}.v\}\cdot\{w\} \le |\{u,v,w\}| \le |v|,
\]
so we conclude that equality holds throughout.
In fact, by replacing $v$ with $w$ (and viewing $\gamma(g)$ as $\{g^{-1}.v,w\}$),
we conclude that $|v| = |w| = |\{u,v,w\}| \le |u|$,
and in particular we have $|u| = \{u\}\cdot\{v,w\}$.
As in the proof of \Cref{reallytworeductive}, this is a contradiction:
since at least one of $\alpha$ and $\beta$ is not reductive,
we have by \Cref{whenreductive} that
\[
  \{u\}\cdot\{v\} + \{u\}\cdot\{w\} < |u|.
\]

Next we show that in fact no ideal edge of the form $\alpha\cup\gamma(g)$
or $\beta\cup\gamma(g)$ is reductive.
Recall that by \Cref{reallytworeductive},
since we assume that $\alpha\cup\beta$ is reductive,
there is at most one such edge,
suppose it is $\alpha\cup\gamma(g)$.
The argument in the case it is of the form $\beta\cup\gamma(g)$ will be identical.
By \Cref{whenreductive}, we have that 
$\frac{1}{2}|\alpha| = \frac{1}{2}|\{u,v\}| 
= \{w\}\cdot\{u,v\} = \{g.w\}\cdot\{u,v\}$.
We will show that $|\alpha\cup\gamma(g)| > |\alpha|$,
contradicting our assumption that this ideal edge is reductive.
Indeed, we claim that
\[
  |\{u,v,g.w\}| > 2\{u,v\}\cdot\{w\} = |\alpha|.
\]
Indeed, observe that $\{g.w\} \cdot \{g.u,g.v\} = \{w\}\cdot\{u,v\}$
by $\mathcal{G}_\star$-equivariance of the star graph,
so it is clear that we have inequality.
That the inequality is \emph{strict}
follows in the case where $\en(F) > 3$
by \Cref{absolutenonzero},
since the $\mathcal{G}_\star$-orbits of the directions $u$, $v$ and $w$
cannot form an isolated component of the star graph.
If instead $\en(F) = 3$, so that $n \ge 2$,
then again by \Cref{absolutenonzero},
we must have $\{w\}\cdot \{g.w\} > 0$.

Thus \emph{any} choice of $g \ne 1$ in $\mathcal{G}_\star$
yields a good polygon from $\alpha$ to $\beta$.

\paragraph{Case two.}
Recall that in this case we assume that $\alpha = \{u,v\}$ 
and $\beta = \{u, \bar v\}$,
i.e.\ that $\alpha\cup\beta$ is an ideal edge,
but $v$ and $\bar v$ share an underlying \emph{unoriented} edge.
If $\alpha \cup\beta$ is not reductive, we are done.
Supposing this ideal edge is reductive,
the good polygon from $\alpha$ to $\beta$ we will build will be the rectangle
in \Cref{compatiblefig} center,
where we set $\alpha_1 = \alpha$ and $\beta_1 = \beta$.

To construct this good polygon, 
we need size-two ideal edges $\alpha_2$ and $\beta_2$
so that $\alpha_1\cup\beta_2$, $\alpha_2\cup\beta_1$ and $\alpha_2\cup\beta_2$
are size-three ideal edges
(where we may abuse notation slightly so that these unions may be taken)
and so that none of these five ideal edges (including $\alpha_2$ and $\beta_2$)
are reductive.

Since we assume that $F \ne A_1 * A_2 * \mathbb{Z}$,
there exists a direction $w$ whose $\mathcal{G}_\star$-orbit is disjoint
from $\{u,v,\bar v\}$;
we choose one such.
For $h \in \mathcal{G}_\star$,
we focus on the ideal edges
$\alpha(h) = \{u, h.v\}$,
$\beta(h) = \{u, h.\bar v\}$,
$\gamma(h) = \{\bar v, h.w\}$
and $\delta(h) = \{v, h.w\}$.
We have $\alpha = \alpha(1)$ and $\beta = \beta(1)$
and will choose $\alpha_2$ from $\{\alpha(g), \gamma(1), \gamma(g)\}$
and $\beta_2$ from $\{\beta(g), \delta(1), \delta(g)\}$
for $g \in \mathcal{G}_\star$ nontrivial.
Observe that by abusing notation slightly,
for any choice of $\alpha_2$ and $\beta_2$,
the size-three ideal edges $\alpha\cup\beta_2$, $\alpha_2\cup\beta$ and $\alpha_2\cup\beta_2$ 
make sense to define.

By \Cref{onereductivesizetwo}, at most one of $\gamma(1)$ and $\gamma(g)$ is reductive,
and similarly for $\delta(1)$ and $\delta(g)$.
Since we assume that $\{u,v,\bar v\}$ is reductive,
neither $\alpha(g)$ nor $\beta(g)$ can be reductive:
if, say, $|\alpha(g)| \le |v|$,
we have that $\{u\}\cdot\{g.v\} \ge \frac{1}{2}|u|$ by \Cref{whenreductive},
but this would that
\[
  |u| \le \{u\}\cdot\{g.v\} + \{v\}\cdot\{g^{-1}.u\} < |\{u,v,\bar v\}|,
\]
contradicting our assumption that this ideal edge is reductive.

Notice that if $\gamma(1)$ is reductive,
we have in particular that $\{\bar v\}\cdot\{w\} \ge \frac{1}{2}|w|$.
Then all four of the ideal edges $\beta(1)\cup\gamma(g)$, 
$\beta(g)\cup\gamma(g)$, $\delta(1)\cup\gamma(g)$
and $|\delta(g)\cup\gamma(g)$
are greater in absolute value than
$\{\bar v\}\cdot\{w\} + \{g.\bar v\}\cdot\{g.w\} \ge |w|$
and so are not reductive.

From this it follows that if there are two reductive size-two ideal edges
under consideration,
there is a good polygon from $\alpha$ to $\beta$.
For instance, if $\gamma(1)$ and $\delta(g)$ are reductive,
the relevant path around the good polygon reads
\[
  \alpha,\ \alpha\cup\delta(1),\ \delta(1),\ 
  \gamma(g)\cup\delta(1),\ \gamma(g),\ \beta\cup\gamma(g),\ \beta.
\]

We proceed continuing to assume that $\gamma(1)$ is reductive,
but now assuming that neither $\delta(1)$ nor $\delta(g)$ is reductive.
If one of $\alpha\cup\beta(g)$, $\alpha\cup\delta(1)$ or $\alpha\cup\delta(g)$ is not reductive,
we may use that ideal edge together with ideas above 
to construct a good polygon from $\alpha$ to $\beta$.
Explicitly, if $\alpha\cup\delta(1)$ is not reductive,
the relevant path reads
\[
  \alpha,\ \alpha\cup\delta(1),\ \delta(1),\ \delta(1)\cup\gamma(g),\
  \gamma(g),\ \beta\cup\gamma(g),\ \beta.
\]
We will show that having all three ideal edges reductive leads to a contradiction.
In fact, this does not use the assumption that $\gamma(1)$ is reductive.
Indeed, by \Cref{tworeductive},
we conclude assuming that $\alpha\cup\beta$ and $\alpha\cup\beta(g)$ are reductive
that $|v| \le |u|$.
If we had equality, \Cref{tworeductive}
would additionally imply that $\alpha\cup\delta(1)$ and $\alpha\cup\delta(g)$
are greater in norm than $|w|$, in contradiction to our assumption
that these ideal edges are reductive.
Again by \Cref{tworeductive},
we conclude that $|w| \le |v|$ as well.
Therefore in particular by \Cref{whenreductive}, we have $|\alpha| = \{w, g.w\}\cdot\{u,v\} \le |w|$.
Since therefore $|w| \ge |\alpha| \ge |v|$, we conclude that equality holds throughout.
But it cannot, since this would imply that the $\mathcal{G}_\star$-orbits
of the directions $u$, $v$ and $w$ form an isolated component of the star graph,
in contradiction to \Cref{absolutenonzero}.

Finally, assume that none of the ideal edges $\gamma(1)$, $\gamma(g)$,
$\delta(1)$ or $\delta(g)$ are reductive.
Now by \Cref{tworeductive}, at most two of the four ideal edges
$\alpha\cup\beta_2$, which take the form $\{u,v,x\}$,
are less than $|x|$ in norm.
If we have $|u| < |v|$, since $\alpha\cup\beta$ is reductive,
this is the only ideal edge of that form which is reductive,
and similarly for ideal edges of the form $\{u,\bar v,x\}$.
It remains, then, just to show that one of the ideal edges
of the form $\alpha_2 \cup\beta_2$ is not reductive.
Suppose, for instance, then, that $\alpha(g) \cup\beta(g)$ is reductive.
This, by \Cref{tworeductive}, implies that
$|\{v,\bar v\}| = \{u, g^{-1}.u\}\cdot\{v,\bar v\} \le |u|$.
Since $|u| < |v|$, this implies that $|\{v,\bar v\}| < |v|$,
so that $\{v\}\cdot\{\bar v\} > \frac{1}{2}|v|$,
and therefore $\gamma(1)\cup\delta(g)$ and $\gamma(g)\cup\delta(1)$
are both greater in norm than $|v|$,
and thus are not reductive unless $|v| < |w|$.
But notice that because $|u| < |v|$,
reductivity of $\alpha(g) \cup\beta(g)$ implies that
$\{u, g.v, g.\bar v\} < |g.\bar v| = |v|$;
by \Cref{tworeductive} it is the only ideal edge of the form $\{u, g.v, x\}$
to be less than or equal to $|x|$ in norm.
In particular $|\alpha(g) \cup \delta(1)| > |w|$ and so is not reductive.
This provides a good polygon from $\alpha$ to $\beta$ in this case.

We therefore assume that $|v| \le |u|$.
Now if $|w| < |v|$,
for each of the ideal edges $\gamma(1)$, $\gamma(g)$, $\delta(1)$, and $\delta(g)$
at most of the relevant size-three ideal edges containing it can be reductive
by \Cref{tworeductive}.
The only worry, then, is that both $\gamma(1)\cup\beta$ and $\gamma(g)\cup\beta$
are reductive or,
symmetrically, both $\alpha\cup\delta(1)$ and $\alpha\cup\delta(g)$ are reductive.
But in either case this implies that $|v| \le |w|$:
for instance in the latter case we have $|v| \le |\{u,v\}| = \{w, g.w\}\cdot\{u, v\} \le |w|$.
This contradiction provides a good polygon from $\alpha$ to $\beta$ in this case.

Continuing to assume that $|v| \le |u|$,
we now suppose that $|v| \le |w|$.
Since the argument in the previous case
that not all three ideal edges of the form $\alpha\cup\beta_2$
are reductive did not use the assumption that $\gamma(1)$ was reductive,
we conclude that at least one ideal edge of the form $\alpha\cup\beta_2$ is not reductive.
Suppose at first that we may choose $\beta_2 = \delta(1)$.
(By relabelling if necessary, this is equivalent to choosing $\beta_2 = \delta(g)$.)
Now by symmetry we know that at least one ideal edge of the form $\alpha_2\cup\beta$
is not reductive,
but we can show more:
If both $\delta(1)\cup\beta$ and $\delta(g)\cup\beta$ are reductive,
then $|v| = |w| = \{w,g.w\}\cdot\{u,\bar v\} = |\{u, \bar v\}|$,
which is a contradiction to \Cref{absolutenonzero},
(as well as the assumption that $\beta$ is not reductive)
since it implies that the $\mathcal{G}_\star$-orbits of $u$, $\bar v$ and $w$ 
form an isolated component of the star graph.

Therefore either $\delta(1)\cup\beta$ or $\delta(g)\cup\beta$ is not reductive.
Suppose at first the former is not reductive.
If $\delta(1)\cup\gamma(1)$ is not reductive, we are done.
Supposing it is,
we see by \Cref{whenreductive} that $\{u,w\}\cdot\{v,\bar v\} = |\{v,\bar v\}|$.
This implies that
$|\{v,\bar v,g.w\}| = |w| + |\{v,\bar v\}|$,
that $|\{g^{-1}.u,v,g.w\}| \ge |v| + |\{v, \bar v\}|$,
that $|\{g^{-1}.u,\bar v,g.w\}| \ge |v| + |\{v, \bar v\}|$,
and that $|\{g^{-1}.u,v,\bar v\}| = |u| + |\{v, \bar v\}|$
so these ideal edges are not reductive.
If we can show that one of $\alpha\cup\beta(g)$ or $\alpha\cup\delta(g)$ is not reductive
and that one of $\alpha(g)\cup\beta$ and $\gamma(g)\cup\beta$ is not reductive,
we will have a good polygon from $\alpha$ to $\beta$.

Suppose, then, that $\alpha\cup\beta(g)$ and $\alpha\cup\delta(g)$ are reductive.
From the former, we conclude that 
$\{u\}\cdot\{v\} = \{u\}\cdot\{v,g.\bar v\} \ge \frac{1}{2}|\{v,g.\bar v\}| = |v|$,
so equality holds throughout.
This implies that $|\bar v| = \{w\}\cdot\{\bar v\}$.
From reductivity of the latter,
we conclude using \Cref{whenreductive} that $|\{u,g.w\}| = \{u\}\cdot\{v\} + \{w\}\cdot\{\bar v\}$.
In particular $|u| = \{u\}\cdot\{v,g.w\}$ and $|w| = \{w\}\cdot\{g^{-1}.u,\bar v\}$.
This implies that $|\{u,g.v,\bar v\}| = |u| + 2|v|$
and that $|\{g^{-1}.u,v,w\}| = 3|v|$,
so neither of these ideal edges are reductive,
and we have a good polygon from $\alpha$ to $\beta$,
the relevant path around which reads 
\[
  \alpha,\ \alpha\cup\delta(1),\ \delta(1),\ \alpha(g)\cup\delta(1),\ \alpha(g)\cup\beta,\ \beta.
\]

A symmetric argument dispenses with the case in which both
$\alpha(g)\cup\beta$ and $\delta(g)\cup\beta$ are reductive.
We have two nested assumptions that need to be dispensed with:
the outer one, that $\alpha\cup\delta(1)$ is not reductive,
and the inner one, that $\gamma(1)\cup\beta$ is not reductive.

So, still supposing that $\alpha\cup\delta(1)$ is not reductive,
suppose that $\gamma(1)\cup\beta$ is reductive.
As we noted above, this implies that $\gamma(g)\cup\beta$ is not reductive,
and therefore if $\gamma(g)\cup\delta(1)$ is not reductive,
we are done.
We therefore suppose that it is reductive.
We first show that $\gamma(g)\cup\delta(g)$ is not reductive.
Indeed, if it were, we see by \Cref{tworeductive}
that $\{u,g.w\}\cdot\{v,\bar v\} = |\{v,\bar v\}|$.
Since $\gamma(g)\cup\delta(1)$ is reductive,
we conclude that in fact
$\{g.w\}\cdot\{\bar v\} = \{g.w\}\cdot\{g.v,\bar v\} \ge \frac{1}{2}|\{v,g^{-1}.v\}| = |\bar v|$,
so equality holds throughout.
But this contradicts our assumption that $\gamma(1)\cup\beta$ is reductive,
since by \Cref{whenreductive} that implies that
$\{\bar v\}\cdot\{u,w\} \ge \frac{1}{2}|\{u,w\}| > 0$.

Thus we are done if $\alpha\cup\delta(g)$ is not reductive.
So suppose it is.
Applying \Cref{whenreductive}, we see that
\[
  |u| + |w| \le \{u\} \cdot \{v, \bar v\} + \{w\}\cdot\{g^{-1}.v, \bar v\}
  + \{u\}\cdot\{w\} + \{u\}\cdot\{g.w\} \le |u| + |v| - \{u\}\cdot\{w\} - \{g^{-1}.u\}\cdot\{w\},
\]
so we have equality throughout; in particular $\{u\}\cdot\{w\} = \{g^{-1}.u\}\cdot\{w\} = 0$.
But this contradicts reductivity of $\gamma(g)\cup\delta(1)$,
since we must have $\{w\}\cdot\{v,g^{-1}.\bar v\} \ge \frac{1}{2}|\{v,g^{-1}.\bar v\}| > 0$.
Therefore we again have a good polygon from $\alpha$ to $\beta$.

Finally, we supposed that we could choose $\alpha\cup\delta(1)$ to not be reductive.
To negate this, in view of the possibility of relabelling,
we may assume that $\alpha\cup\delta(1)$ and $\alpha\cup\delta(g)$ are both reductive.
This implies that $|w| = |v|$,
and thus that $\{w, g.w\} \cdot\{u,v\} = |\{u,v\}|$.
As we saw, we must have that $\alpha\cup\beta(g)$ is not reductive.
Since this is equal in norm to $|\{u,v\}| + |v|$,
we have that these sum to more than $|u|$.
By \Cref{whenreductive},
this implies that $\{u\}\cdot\{v\} < |v|$.
But reductivity of $\alpha\cup\beta$
actually implies that
$\{u\}\cdot\{v\} = \{u\}\cdot\{v,\bar v\} \ge \frac{1}{2}|\{v,\bar v\}| = |v|$.
This contradiction completes the proof in this case.

\paragraph{Case three.}
Recall that in this case we assume that  $\alpha = \{u,v\}$
and $\beta = \{u,g.v\}$,  i.e.\ $\alpha$ and $\beta$
share both underlying oriented edges.
In this case  $\alpha \cup \beta$ fails to be an ideal edge.
Since $\en(F) \ge 3$, there exists a direction $w$ based at $\star$
whose $\mathcal{G}_\star$-orit is distinct from that of $u$ and $v$;
choose one.

The proof breaks into two cases:
either we are able to choose $w$ so that
$\{u,v,w\}$ represents a trio,
or we are not able to,
in which case we may choose $w$ to either be $\bar u$ or $\bar v$,
since we assume that $F \ne A_1 * A_2 * \mathbb{Z}$.

\paragraph{The first subcase.}
Suppose at first that $\{u,v,w\}$ represents a trio.
We consider the ideal edges $\gamma(1) = \{u,w\}$,
$\gamma(g) = \{u,g.w\}$,
$\delta(1) = \{v,w\}$ and $\delta(g) = \{v,g.w\}$.
We will use these ideal edges to build a good polygon
(the rectangle in \Cref{compatiblefig})
from $\alpha$ to $\beta$.

Observe that we cannot have that $\alpha\cup\gamma(1)$ and $\alpha\cup\gamma(g)$
are both reductive.
If we did, we conclude by \Cref{tworeductive}
that $|\alpha| = \{w, g.w\}\cdot\{u,v\}$.
An argument based on \Cref{absolutenonzero}
shows that both $\alpha\cup\gamma(1)$ and $\alpha\cup\gamma(g)$
must be strictly greater in norm than this quantity,
which in turn is not less than the minimum of $|u|$ and $|v|$,
implying neither ideal edge is reductive after all.
The same argument applies to all the pairs
$\alpha\cup\delta(1)$ and $\alpha\cup\delta(g)$,
$\beta\cup\gamma(1)$ and $\beta\cup\gamma(g)$
and $\beta\cup\delta(1)$ and $\beta\cup\delta(g)$.

In fact, provided that, say, $\gamma(1)$ is not reductive,
the same argument above proves that $\alpha\cup\gamma(1)$
and $\beta\cup\gamma(1)$ cannot both be reductive.
By \Cref{onereductivesizetwo},
at most one of $\gamma(1)$ and $\gamma(g)$ is reductive,
and similarly for $\delta(1)$ and $\delta(g)$.

So suppose for instance that $\gamma(1)$ and $\delta(g)$ are not reductive.
At most one of $\alpha\cup\gamma(1)$ and $\alpha\cup\gamma(g)$ can be reductive;
suppose the latter is not.
If $\beta\cup\gamma(g) = \beta\cup\delta(1)$ is reductive,
then we have that both $\alpha\cup\delta(g) = \alpha\cup\gamma(g)$
and $\beta\cup\delta(g)$ are not reductive.
This actually suffices to prove the theorem,
but we can work a little harder to complete this to a good polygon.
If both $\gamma(g)$ and $\delta(1)$ are reductive,
we have $\{w\}\cdot\{g^{-1}.u,v\} = |w|$.
This implies that $|\beta\cup\gamma(1)| = |\{u,g.v,w\}| > 2|w|$,
so is not reductive.
(In fact, with a little more care, it can be shown that the
ideal edge $\alpha\cup\gamma(1)$ is also not reductive,
so the good polygon with corners $\alpha$, $\gamma(1)$, $\beta$, and $\delta(g)$ has no
reductive size-three ideal edges.)
If, say, $\delta(1)$ is not reductive,
continuing to assume that $\beta\cup\gamma(g) = \beta\cup\delta(1)$ is reductive,
we conclude that $\alpha\cup\gamma(1) = \alpha\cup\delta(1)$ is not,
and we have a good polygon with corners
$\alpha$, $\delta(1)$, $\beta$ and $\delta(g)$.
A variant of this argument works in all cases.

\paragraph{The second subcase.}
So suppose instead that $F = A_1 * F_2$:
our only choices for $w$ are $\bar u$ and $\bar v$.
We will consider the ideal edges
$\gamma(1) = \{u,\bar v\}$,
$\gamma(g) = \{u,g.\bar v\}$,
$\delta(1) = \{\bar u, v\}$,
$\delta(g) = \{\bar u, g.v\}$,
$\eta(1) = \{\bar u, \bar v\}$
and $\eta(g) = \{\bar u, g.\bar v\}$.
The good polygon we build will be either the rectangle or the hexagon
in \Cref{compatiblefig}, right,
with $\alpha$ and $\beta$ seprarated by one vertex of the polygon.

Now, by \Cref{onereductivesizetwo},
at most one $\gamma$ ideal edge,
one $\delta$ ideal edge and one $\eta$ ideal edge can be reductive.
Suppose that there are three such.
For simplicity assume they are $\gamma(1)$,
$\delta(1)$ and $\eta(1)$:
the other cases are essentially identical.
Then by \Cref{whenreductive}, we have that
$\{\bar v\}\cdot\{u, \bar u\} = |\bar v|$,
while $\{\bar u\}\cdot\{v, \bar v\} = |\bar u|$.
In particular $|u| = |v|$.
Observe, then, that
each of the ideal edges
$\alpha\cup\gamma(g)$,
$\beta\cup\gamma(g)$,
$\alpha\cup\delta(g)$
and $\beta\cup\delta(g)$ are greater in norm than
$\frac{3}{2}|v| = \frac{3}{2}|u|$,
so none of these ideal edges are reductive
and we have a good rectangle from $\alpha$ to $\beta$.

Suppose next that there are two reductive
size-two ideal edges.
Up to swapping $u$ and $v$,
we will consider the cases of $\gamma(1)$ and $\eta(1)$
as well as $\gamma(1)$ and $\delta(1)$;
the others are essentially identical.
If $\gamma(1)$ and $\eta(1)$ are reductive,
then by \Cref{whenreductive},
we have that $|\bar v| = \{\bar v\}\cdot\{u,\bar u\}$.
We claim that we have a good rectangle
with corners
\[
  \alpha,\ \gamma(g),\ \beta\text{ and }\delta(1).
\]
To see this, suppose that one of the relevant size-three ideal edges
is reductive. For example, if $\alpha\cup\gamma(g)$ is reductive,
we have
\[
  |v| \ge \{u\}\cdot\{v\} = \{u\}\cdot\{v, g.\bar v\} \ge 
  \frac{1}{2}|\{v,g.\bar v\}| = |v|.
\]
It follows that $|\beta\cup\gamma(g)| = |u| + |v| + |\bar v|$
and $|\beta\cup\delta(1)| = |v| + |\{g^{-1}.u,\bar u\}|$,
so neither of these ideal edges is reductive.
And additionally, by \Cref{whenreductive},
if $\alpha\cup\delta(1)$ is reductive,
then $|\bar u| = \{\bar u\}\cdot \{u,\bar v\} < \{u\}\cdot\{u,v,\bar v\} = |u|$,
which is impossible.
The argument if $\beta\cup\gamma(g)$ is reductive is analogous.
On the other hand, if $\alpha\cup\delta(1)$ is reductive,
it follows from \Cref{whenreductive}
that $|v| = \{v\}\cdot\{u,\bar u\} = \frac{1}{2}|\{u,\bar u\}|$.
Then $|\beta\cup\gamma(g)| = |u| + 2|v|$, so this ideal edge is not reductive.
Reductivity of $\beta\cup\delta(1)$ or $\alpha\cup\gamma(g)$
would imply respectively that 
$\{v\}\cdot\{\bar u\} = |v| = |u|$
or $\{v\}\cdot\{u\} = |v|$.
But again this would imply in either case 
that $|u|$ and $|\bar u|$ cannot be equal, 
but they must.
The case that $\beta\cup\delta(1)$ is reductive is again analogous.
The existence of a good rectangle follows.

If instead $\gamma(1)$ and $\delta(1)$ are reductive,
we again have a good rectangle,
this time with corners
\[
  \alpha,\ \gamma(g),\ \beta\ \text{ and }\delta(g).
\]
Indeed, \Cref{whenreductive}
implies that $\{\bar v\}\cdot\{u\} \ge \frac{1}{2}|u|$ and $\frac{1}{2}|\bar v|$,
while $\{v\}\cdot\{\bar u\} \ge \frac{1}{2}|\bar u|$ and $\frac{1}{2}|v|$.
We observe then, that
$|\alpha\cup\gamma(g)| \ge \frac{3}{2}|u|$
because we have
\[
  \{u\}\cdot\{\bar v\} + \{v\}\cdot\{\bar u\} + \{g.\bar v\}\cdot\{g.u\} 
  \le|\{u,v,g.\bar v\}.
\]
Similarly we see that $|\alpha\cup\delta(g)| \ge \frac{3}{2}|v|$
and $|\beta\cup\delta(g)| \ge \frac{3}{2}|v|$,
so none of these ideal edges are reductive.

If there is only one reductive size-two ideal edge,
up to swapping the roles of $u$ and $v$ it is either
a $\gamma$ or an $\eta$.
Suppose that the reductive ideal edges is $\gamma(1)$,
so that by \Cref{whenreductive}, we have $\{u\}\cdot\{\bar v\} \ge \frac{1}{2}|v|$
and $\frac{1}{2}|u|$.
Since the $\mathcal{G}_\star$-orbits of $u$, $v$ and $\bar v$
cannot form an isolated component of the star graph, we conclude that
$|\beta\cup\gamma(g)| > |u|$ and $|\alpha\cup\gamma(g)| > |u|$.
The same argument applies to show that
$|\alpha\cup\delta(g)| > |v|$
and $|\beta\cup\delta(g)| > |v|$,
so we have a good rectangle from $\alpha$ to $\beta$.
The argument supposing $\gamma(g)$ is reductive is identical.

If instead $\eta(1)$ is reductive,
we see by \Cref{whenreductive} that $\{\bar u\}\cdot\{\bar v\} \ge \frac{1}{2}|v|$
and $\frac{1}{2}|u|$.
This implies that $\alpha\cup\gamma(1)$ and $\beta\cup\gamma(g)$
cannot both be reductive,
for if they were, we would have
$|\{v,\bar v\}| = \{u,g^{-1}.u\}\cdot\{v,\bar v\}$
in contradiction to the fact that $\{\bar v\}\cdot\{\bar u\} > 0$.
Similarly at most one of $\alpha\cup\gamma(g)$ and $\beta\cup\gamma(1)$,
one of $\alpha\cup\delta(1)$ and $\beta\cup\delta(g)$
and one of $\alpha\cup\delta(g)$ and $\beta\cup\delta(1)$ are reductive. 

Now by \Cref{tworeductive},
at most two ideal edges of the form $\alpha\cup\{x\}$
satisfy $|\alpha\cup\{x\}| \le |x|$,
and if there are two, equality holds.
If $|u| < |v|$, we see that $\alpha\cup\gamma(1)$ and $\alpha\cup\gamma(g)$
cannot both be reductive.
If $|u| > |v|$, the same applies to $\alpha\cup\delta(1)$ 
and $\alpha\cup\delta(g)$,
and if $|u| = |v|$,
at most two of these four ideal edges may be reductive.
The same holds for size-three ideal edges containing $\beta$.

Suppose $|u| < |v|$.
If $\alpha\cup\gamma(1)$ and $\beta\cup\gamma(1)$ are reductive,
then $\alpha\cup\gamma(g)$ and $\beta\cup\gamma(g)$ are not,
and if $\alpha\cup\delta(1)$ is reductive,
then $\beta\cup\delta(g)$ is not
and regardless of whether $\alpha\cup\delta(g)$ is,
we have a good rectangle from $\alpha$ to $\beta$.
The same proof works if $\alpha\cup\gamma(g)$ and $\beta\cup\gamma(g)$ 
are reductive.
If neither of these occurs,
then at most one of the four size-three ideal edges
is reductive and we again have a good rectangle.

If we suppose $|v| < |u|$,
swapping the roles of $u$ and $v$ completes the proof as above.
So suppose $|v| = |u|$.
Now at most two of the four size-three ideal edges containing $\alpha$
are reductive, and similarly for $\beta$.
If there are three or fewer, or more generally,
if for some $\gamma$ or some $\delta$,
the relevant size-three ideal edges are both not reductive,
we have a good rectangle.
If there are four, each involving a different $\gamma$ or $\delta$,
we can build a hexagon.
Indeed, because three of the four directions cannot form
an isolated component of the star graph,
no size-three ideal edge containing $\eta(g)$ is reductive.
For example if $\alpha\cup\gamma(1)$ and $\beta\cup\gamma(g)$ are not reductive,
one good hexagon has corners
\[
  \alpha,\ \delta(1),\ \beta,\ \gamma(g),\ \eta(g)\text{ and } \gamma(1).
\]

Now suppose no size-two ideal edge is reductive.
Again we will distinguish between the case $|u| < |v|$
and the case of equality.
In the former case, again we have that not both of
$\alpha\cup\gamma(1)$ and $\alpha\cup\gamma(g)$ are reductive
and similarly for $\beta\cup\gamma(1)$ and $\beta\cup\gamma(g)$.
If only one of these four ideal edges is reductive,
we have a good rectangle.
If $\alpha\cup\gamma(1)$ and $\beta\cup\gamma(1)$ are both reductive,
this would contradict \Cref{tworeductive} as well.
Suppose, then, that $\alpha\cup\gamma(1)$ and $\beta\cup\gamma(g)$ 
are both reductive.
Then $|v| > |u| \ge \{u,g^{-1}.u\}\cdot\{v,\bar v\} = |\{v,\bar v\}|$,
so $\{v\}\cdot\{\bar v\} > \frac{1}{2}|v|$.
From this it follows that $\delta(1)\cup\eta(1)$,
$\delta(g)\cup\eta(1)$, $\delta(1)\cup\eta(g)$ and $\delta(g)\cup\eta(g)$
are all not reductive.

Observe that at most two of the size-three ideal edges
$\alpha\cup\delta(1)$, $\beta\cup\delta(g)$
$\gamma(1)\eta(1)$ and $\gamma(g)\cup\eta(g)$
can be reductive by \Cref{whenreductive}.
The same holds true for $\alpha\cup\delta(g)$,
$\beta\cup\delta(1)$, $\gamma(g)\cup\eta(1)$ and $\gamma(1)\cup\eta(g)$.
If at most one of the four size-three ideal edges
$\alpha\cup\delta(1)$, $\beta\cup\delta(1)$,
$\alpha\cup\delta(g)$ and $\beta\cup\delta(g)$
is reductive,
we have a good rectangle.
If there are two, say $\alpha\cup\delta(1)$ and $\beta\cup\delta(1)$,
we again have a good rectangle by including $\delta(g)$ and either $\gamma$
ideal edge as corners.
A similar statement holds if instead 
$\alpha\cup\delta(g)$ and $\beta\cup\delta(g)$ are reductive.
If $\alpha\cup\delta(1)$ and $\beta\cup\delta(g)$ are reductive
but $\alpha\cup\delta(g)$ and $\beta\cup\delta(1)$ are not,
then $\gamma(g)\cup\eta(g)$ and $\gamma(1)\cup\eta(1)$ are not reductive
and one good hexagon from $\alpha$ to $\beta$ has corners
\[
  \alpha,\ \gamma(g),\ \beta,\ \gamma(1),\ \eta(1)\text{ and } \delta(g).
\]
A similar statement holds if the two reductive ideal edges 
containing a $\delta$ ideal edge
are $\alpha\cup\delta(g)$ and $\beta\cup\delta(1)$.

If three ideal edges containing a $\delta$ are reductive,
say $\alpha\cup\delta(1)$, $\alpha\cup\delta(g)$ and $\beta\cup\delta(g)$,
then $\{v,g.v\}\cdot\{u,\bar u\} = |\{u,\bar u\}|$,
while 
$\{v\}\cdot\{g.\bar u,u\} = \{v\}\cdot\{u\} \ge \frac{1}{2}|\{g.\bar u,u\}| = |u|$.
But this implies that $|\beta\cup\gamma(g)| = |u| + |\{g.v,g.\bar v\}|$,
contradicting the assumption that this ideal edge is reductive.

Finally suppose that $|u| = |v|$.
Then at most two ideal edges containing any given size-two ideal edge
may be reductive by \Cref{tworeductive}.
Consider, for example, the $\gamma$ and $\delta$ ideal edges.
At most two of these together with $\alpha$ may be reductive
and similarly for $\beta$.
If for some $\gamma$ or $\delta$ neither size-three ideal edge is reductive,
we have a good rectangle from $\alpha$ to $\beta$,
because for at least one of the remaining $\gamma$ or $\delta$ ideal edges,
at most one of the relevant size-three ideal edges is reductive. 

So suppose that for all $\gamma$ and $\delta$,
at least one of the size-three ideal edges containing it and either
$\alpha$ or $\beta$ is reductive.
But then notice that at least one of the size-three ideal edges
containing it and either $\eta(1)$ or $\eta(g)$ is not reductive.
In fact, we have a good hexagon unless
for each size-two ideal edge there are two others for which
the corresponding size-three ideal edge is reductive
and for, say, $\eta(1)$, either $\eta(1)$ and $\alpha$
have the same set of size-two ideal edges which yield 
(non)-reductive size-three ideal edges
or $\eta(1)$ and $\beta$ have this property
(and therefore similarly for $\eta(g)$.)

Suppose for definiteness that $\alpha\cup\gamma(1)$ and $\alpha\cup\delta(g)$
are reductive,
$\beta\cup\gamma(g)$ and $\beta\cup\delta(1)$ are reductive,
$\eta(1)\cup\gamma(1)$ and $\eta(1)\cup\delta(g)$ are reductive,
$\eta(g)\cup\gamma(g)$ and $\eta(g)\cup\delta(1)$ are reductive.
(Notice that swapping for instance 
$\delta(1)$ and $\delta(g)$ above is not possible
by \Cref{tworeductive}.)
We claim this is not possible.
Indeed, by \Cref{whenreductive},
we have that $\{u,g^{-1}.u\}\cdot\{v,\bar v\} = |\{v,\bar v\}|$,
while $0 = \{\bar u\}\cdot\{\bar v,g.v\} \ge \frac{1}{2}|\{\bar v,g.v\}|$,
which is impossible.
A similar argument works in all cases;
this completes the proof.

\section{Semistability at infinity}\label{semistabilitysection}
The goal of this section is to prove \Cref{semistability}.
Recall that we write $B_k$ for the ball of radius $k$.
We continue to let
\[
  N(k) = \max\{\|\tau\| : \tau \in C_k\},
\]
where $C_k$ denotes the set of reduced marked graphs of groups
(``briar patches'') whose stars have nonempty intersection with $B_k$.
To prove semistability,
we will show that when $L(F)$ is one ended,
for a proper ray
$\rho\colon \mathbb{R}_{\ge 0} \to L$,
with $\rho(k)$ a briar patch such that the sequence
$\|\rho(k)\| > N(k)$ is strictly increasing,
the inverse sequence
\[
  \pi_1(L - B_k, \rho(k))
\]
satisfies the Mittag-Leffler condition.

\begin{thm}\label{mainsemistability}
Suppose $L(F)$ is one ended.
If $n \ge N(k)$, then every loop in $L - B_{N(k)}$
based at $\rho(N(k))$
is homotopic to a loop in $L - B_n$ based at $\rho(n)$
by a homotopy that avoids $B_k$.
\end{thm}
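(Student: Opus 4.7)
My plan is to upgrade the local-minimum elimination in the proof of \Cref{mainconnectivity} from a path-replacement procedure to a genuine simplicial homotopy, by reinterpreting the subdivided squares of \Cref{homotopyfig} and the good polygons of \Cref{compatiblefig} as $2$-disks mapped into $L$. Given a loop $\gamma$ in $L - B_{N(k)}$ based at $\rho(N(k))$, I first replace $\gamma$ by a standard loop in the $1$-skeleton of $L - B_{N(k)}$, using simplicial approximation together with the generation of $\out(F)$ by Whitehead and factor automorphisms.

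I then iteratively eliminate briar patches of norm at most $n$ along this loop, exactly as in the proof of \Cref{mainconnectivity}. If $\tau$ is a minimum such briar patch, of norm $m \le n$, then $m > N(k) \ge k$ because the loop avoids $B_{N(k)}$; the adjacent briar patches satisfy the hypotheses of \Cref{mainconnectivitylemma}, which supplies a new standard sub-path through briar patches of norm strictly greater than $m$. By \Cref{compatiblepaths} and the good-polygon construction, this sub-path is filled in by a simplicial $2$-disk giving a free homotopy from the old loop to the new. After finitely many iterations no briar patch of norm at most $n$ remains on the loop, and sliding the basepoint from $\rho(N(k))$ to $\rho(n)$ along $\rho|_{[N(k),n]} \subset L - B_{N(k)}$ produces a loop in $L - B_n$ based at $\rho(n)$.

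The crucial verification is that each replacement $2$-disk lies in $L - B_k$. Every $2$-simplex of a subdivided square or good polygon contains, as one of its vertices, the center of the subdivision, and this center is a marked graph of groups that collapses onto $\tau$: either $\tau$ itself for the good polygon of \Cref{compatiblefig}, or a two-fold blow-up $\tau^{\gamma,\gamma'}$ for an elementary subdivided square of \Cref{homotopyfig}. Hence the center lies in the star of $\tau$. Since $\|\tau\| > N(k)$, the briar patch $\tau$ is not in $C_k$, so the star of $\tau$ is disjoint from $B_k$; in particular the center is not in $B_k$. If some $2$-simplex of the homotopy were in $B_k$, then all of its vertices, including the center, would collapse to a common briar patch of norm at most $k$, forcing the center into $B_k$: a contradiction. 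Hence every $2$-simplex of the homotopy lies in $L - B_k$.

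The main obstacle is bookkeeping: verifying that the initial simplicial approximation can be taken inside $L - B_{N(k)}$, that the iteration terminates (each replacement strictly reduces the number of briar patches of norm at most $n$ along the loop), and that the resulting homotopies concatenate into a single free homotopy from $\gamma$ to a loop in $L - B_n$ entirely inside $L - B_k$. None of this requires new geometric input beyond the star-of-$\tau$ observation in the previous paragraph.
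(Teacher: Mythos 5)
Your strategy is the same as the paper's: standardize the loop, iteratively eliminate minimum-norm briar patches via \Cref{mainconnectivitylemma}, and observe that each replacement $2$-disk lies in the star of a briar patch of norm exceeding $N(k)$, which is disjoint from $B_k$ by the definition of $N(k)$. The key geometric observation is correct. (One refinement: the cleanest way to see that a replacement $2$-simplex is \emph{disjoint from}, not merely not contained in, $B_k$ is to note that the center of each subdivided square from \Cref{compatiblepaths} is the \emph{maximal} element of the collapse chain forming the $2$-simplex; since every other vertex is a collapse of it, none of them can lie in $B_k$ if the center does not, and hence the closed simplex misses the subcomplex $B_k$.)

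However, the two steps you wave off as bookkeeping are where the paper does its remaining work, and your treatment of both has gaps. First, the generation of $\out(F)$ by Whitehead and factor automorphisms produces \emph{some} standard path between two briar patches, but it does not show that a given simplicial loop in $L - B_{N(k)}$ is homotopic to a standard one \emph{by a homotopy remaining in} $L - B_{N(k)}$. That statement is \Cref{allpathsstandard}, and its proof requires \Cref{forestlemma}, the matroid exchange property for maximal forests in a marked graph of groups, to interpolate between the two maximal forests collapsing a non-reduced vertex of the loop to its two briar-patch neighbors. This is genuine content, not routine verification. Second, your basepoint handling does not work as written. You push the loop off $B_n$ freely and only afterwards slide the basepoint from $\rho(N(k))$ to $\rho(n)$ along $\rho|_{[N(k),n]}$. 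But nothing guarantees $\|\rho(N(k))\| > n$: we only know $\|\rho(N(k))\| > N(N(k))$, while $n \ge N(k)$ is arbitrary, so $\rho(N(k))$ may itself be eliminated during the iterations, making ``slide from $\rho(N(k))$'' ill-defined afterward. Moreover $\rho|_{[N(k),n]}$ is not known to lie in $L - B_{N(k)}$ (nor in $L - B_n$): the hypotheses control $\rho$ only at integer times. The paper sidesteps both issues by first conjugating the loop by a path in $L - B_{N(k)}$ (supplied by \Cref{mainconnectivity}) to obtain a loop based at $\rho(n)$, and \emph{then} pushing off $B_n$; since $\|\rho(n)\| > N(n) \ge n$, the basepoint $\rho(n)$ is never a candidate for elimination and stays fixed throughout.
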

When $L(F)$ is not one ended,
we will use our good algebraic understanding of $\out(F)$
to show that each end of $L(F)$ is semistable.

To prove \Cref{mainsemistability},
we need the following proposition.
\begin{prop}[cf.\ Proposition 4.1 of~\cite{Vogtmann}]\label{allpathsstandard}
  Every path in $L - B_k$ is homotopic to a standard path in $L - B_k$
  by a homotopy in $L - B_k$.
\end{prop}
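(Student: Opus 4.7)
The plan is to adapt Vogtmann's proof of her analogous~\cite[Proposition 4.1]{Vogtmann}, proceeding in three stages.

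First, I note that $B_k$ is a union of closed stars and hence a closed subcomplex of $L$; its complement $L - B_k$ is open, so standard simplicial approximation lets me homotope any path in $L - B_k$ to an edge path in $L - B_k$. I then refine this edge path so that each edge corresponds to collapsing or blowing up a single edge of the underlying marked graph of groups: each edge of $L$ from $\gamma$ to $\gamma'$, say where $\gamma'$ is obtained from $\gamma$ by collapsing a forest $F$, lies in a simplex of $L$ spanned by the partial collapses of $F$, and subdividing through these intermediates is a homotopy within a single simplex of $L$, hence within $L - B_k$.

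Second, I insert briar patches at alternating positions of the path. For each non-briar-patch vertex $\gamma_i$, I choose any briar patch $\tau_i$ to which $\gamma_i$ collapses. Such $\tau_i$ must lie in $L - B_k$: if $\|\tau_i\| \le k$, then $\gamma_i \in \st(\tau_i) \subset B_k$, contradicting $\gamma_i \in L - B_k$. Since $\st(\tau_i)$ is contractible by~\cite{KrsticVogtmann}, I homotope the subpath through $\gamma_i$ within $\st(\tau_i)$ to pass through $\tau_i$, arranging briar patches in the desired alternating positions.

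Third, to obtain a genuinely standard path, each triple $(\tau, \gamma, \tau')$ between consecutive briar patches must be an elementary Whitehead path, i.e., $\gamma$ should be a single-ideal-edge blowup. If instead $\gamma = \tau^{\alpha_1, \ldots, \alpha_I}$ is a larger iterated blowup, I factor the subpath as a concatenation of elementary Whitehead paths by iteratively blowing up one ideal edge at a time and collapsing an appropriate edge to obtain a new intermediate briar patch, using the first stage's refinements and the contractibility of the star of each intermediate briar patch.

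The main obstacle is keeping all these intermediate vertices inside $L - B_k$: it is \emph{not} automatic that every vertex of $\st(\tau_i)$ lies outside $B_k$, since such a vertex may collapse to some other briar patch of small norm. The key tool is the norm-change formula \Cref{eqnstar}: by choosing the Whitehead moves in the final stage to strictly increase norm (i.e., so $|\alpha_j| > |e_j|$), the constructed intermediate briar patches have norm strictly greater than $\|\tau_i\| > k$, keeping the entire modified path in $L - B_k$.
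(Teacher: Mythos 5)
Your outline---simplicial approximation, inserting briar patches at alternating positions, refining to elementary Whitehead paths---is in the right spirit, and your observation that any briar patch $\tau_i$ that $\gamma_i$ collapses onto must have norm $> k$ is correct and important. But your final step contains a genuine gap. You propose to invoke \Cref{eqnstar} and choose the Whitehead moves in the factorization to strictly increase norm. This cannot work: given a triple $(\tau,\gamma,\tau')$ with $\gamma = \tau^{\alpha_1,\ldots,\alpha_I}$, the intermediate briar patches in any factorization into elementary Whitehead path segments are constrained by the combinatorics of $\gamma$ and its collapses---they are not free parameters you can tune. Moreover, whether a norm-increasing move even \emph{exists} is precisely the delicate content of \Cref{mainconnectivitylemma}, which requires the extra hypotheses $\dim L \ge 2$ and $F \ne A_1*A_2*\mathbb{Z}$; the present proposition has no such hypotheses and must hold for every path unconditionally, so an argument relying on norm increase is doomed.

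The observation you are missing, which makes the norm argument unnecessary, is that if a vertex $\gamma$ of $L$ lies outside $B_k$, then so does every vertex that $\gamma$ collapses onto: if $\gamma$ collapsed onto $\gamma'$ and $\gamma'$ collapsed onto a briar patch $\tau$ with $\|\tau\| \le k$, then $\gamma$ would lie in $\st(\tau) \subset B_k$, a contradiction. The paper's interpolation passes only through collapses of the odd-indexed $\gamma_i$, which lie outside $B_k$, so every intermediate vertex is \emph{automatically} outside $B_k$, with no computation required. The other ingredient you are missing is \Cref{forestlemma}: the two maximal forests of $\gamma_i$ whose collapses give the adjacent briar patches can be connected by a sequence of single-edge swaps through maximal forests (a matroid exchange), and this is exactly what produces the standard path segment $(\rho_0,\delta_1,\rho_1,\ldots,\delta_s,\rho_s)$ in the star of $\gamma_i$. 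Without the forest lemma your ``iteratively blowing up one ideal edge at a time'' step remains unspecified, and the attempt to pin it down via norm increase is the wrong fix.
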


To prove the proposition, we need a lemma.

\begin{lem}\label{forestlemma}
  Let $\mathcal{G}$ be a marked graph of groups
  representing a vertex of $L$
  with two maximal forests $F$ and $F'$.
  For each edge $e' \in F' - F$,
  there exists an edge $e \in F - F'$
  such that $F \cup\{e'\} - \{e\}$ is a maximal forest.
  Put another way, if $\mathcal{G}$ is not a briar patch,
  the collection of maximal forests
  in $\mathcal{G}$ forms a \emph{matroid.}
\end{lem}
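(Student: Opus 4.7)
My plan is to recognize the collection of maximal forests in $\mathcal{G}$ as the bases of a classical graphic matroid, so that the exchange property is immediate. Form the auxiliary graph $\mathcal{G}^*$ by identifying all vertices of $\mathcal{G}$ with nontrivial vertex group to a single vertex $\star$ and discarding any self-loops at $\star$ that result. The key observation is that an edge subset $F$ of $\mathcal{G}$ is a forest in the collapsibility sense---equivalently, since all edge groups are trivial, $F$ is acyclic in $\mathcal{G}$ and each component of the subgraph $F \subseteq \mathcal{G}$ contains at most one vertex with nontrivial vertex group---if and only if the corresponding set of edges of $\mathcal{G}^*$ is a graph-theoretic forest. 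This is because any cycle in $F \subseteq \mathcal{G}^*$ either lifts to a cycle in $F \subseteq \mathcal{G}$ or passes through $\star$, in which case it corresponds to an $F$-path in $\mathcal{G}$ joining two distinct vertices of nontrivial vertex group. Maximal forests in $\mathcal{G}$ thus correspond bijectively to spanning forests of $\mathcal{G}^*$.

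With this dictionary in hand, the lemma is nothing more than the basis-exchange axiom for the graphic matroid of $\mathcal{G}^*$: given $e' \in F' - F$, adjoining $e'$ to the spanning forest $F$ of $\mathcal{G}^*$ creates a unique cycle $C$; since $F'$ is itself acyclic in $\mathcal{G}^*$, we cannot have $C \subseteq F'$, so there exists $e \in (C - \{e'\}) \cap (F - F')$, and $F \cup \{e'\} - \{e\}$ remains a spanning forest of $\mathcal{G}^*$, which pulls back to a maximal forest of $\mathcal{G}$.

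I anticipate no real obstacle beyond verifying the characterization of forests above, which is standard. For a reader who prefers to argue directly in $\mathcal{G}$, the only subtle case not handled by the classical cycle argument is when $e' = [u,w]$ merges two $F$-components $X, Y$ of $\mathcal{G}$ containing vertices $m_X, m_Y$ of nontrivial vertex group without creating a graph-theoretic cycle. Then the unique $(F \cup \{e'\})$-path $P$ from $m_X$ to $m_Y$ traverses $e' \in F'$; because $F'$ is also a forest in our sense, $m_X$ and $m_Y$ lie in different components of $F'$, so $P$ cannot be entirely contained in $F'$, and any $e \in P \cap (F - F')$ does the job: removing $e$ separates $m_X$ from $m_Y$ in $F \cup \{e'\} - \{e\}$ and introduces no cycle. (That $P$ contains an edge besides $e'$ is automatic because $u$ and $w$ cannot both have nontrivial vertex group; otherwise $e' \in F'$ would already violate the forest condition on $F'$.)
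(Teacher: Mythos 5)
Your proof is correct. The underlying exchange argument is the same one the paper uses: form $F\cup\{e'\}$, observe it is not a forest, locate the witnessing circuit, and note that this circuit cannot lie entirely inside $F'$, so some edge of $F-F'$ along it can be exchanged for $e'$. What you add is the quotient graph $\mathcal{G}^*$, which folds the paper's two cases (a graph-theoretic cycle, or a path joining two vertices with nontrivial vertex group) into the single case of a cycle in an ordinary graph, and thereby identifies the collection of maximal forests of $\mathcal{G}$ literally with the bases of the graphic matroid of $\mathcal{G}^*$. This is a cleaner packaging of the same idea, and it directly justifies the lemma's ``put another way'' clause; your direct variant at the end essentially reproduces the paper's own argument. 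Two small remarks: $\mathcal{G}^*$ may still contain loops coming from loop edges of $\mathcal{G}$ at vertices with trivial vertex group, but this is harmless since loops lie in no basis of a graphic matroid and such edges of $\mathcal{G}$ lie in no forest; and since $\mathcal{G}$ is connected, so is $\mathcal{G}^*$, which guarantees that adjoining $e'$ to the spanning tree $F$ really does create a cycle --- a fact you use implicitly.
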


Recall that a \emph{matroid} is simply a (nonempty) set,
some of whose finite subsets are called \emph{bases.}
The bases of a matroid satisfy an \emph{exchange condition}
similar to the one in the statement of the lemma.
Other examples include the set of spanning trees in a finite graph which is not a rose
or the set of bases of a vector space of finite positive dimension.

\begin{proof}
  Let $F$, $F'$ and $e'$ be as in the statement.
  Since $F$ was maximal,
  $F \cup \{e'\}$ fails to be a maximal forest,
  either because as a graph it has rank one
  or because it connects a pair of vertices $a$ and $b$ with nontrivial vertex group.
  Let $C$ be either the minimal subgraph supporting the (ordinary)
  fundamental group of $F \cup \{e'\}$
  in the former case or the unique geodesic from $a$ to $b$ in the latter.
  Now because $F'$ is a maximal forest,
  $C$ cannot be completely contained in $F'$,
  so any edge of $F - F'$ contained in $C$ will do as $e$.
\end{proof}

\begin{proof}[Proof of \Cref{allpathsstandard}.]
  If $\gamma$ is a marked graph of groups which lies outside of $B_k$,
  then every marked graph of groups it collapses onto is also outside of $B_k$
  by definition.
  Thus if we have a path $P = (\gamma_0,\gamma_1,\ldots,\gamma_\ell)$,
  by collapsing forests in alternating marked graphs of groups,
  we may assume that each $\gamma_k$ with $k$ even is a briar patch.
  If $\ell$ is odd, we may extend $P$ by some collapse of $\gamma_\ell$
  to a path of even length.
  If $\gamma_{k-1}$ and $\gamma_{k+1}$ are briar patches obtained in this way,
  they are obtained from $\gamma_k = (\mathcal{G},\sigma)$ by collapsing
  maximal forests $F$ and $F'$ in $\mathcal{G}$.

  By repeatedly applying \Cref{forestlemma},
  we see that we may interpolate from $F$ to $F'$
  through maximal forests by adding an edge of $F'$
  and removing one of $F$ at each step.
  The path segment $(\gamma_{k-1},\gamma,\gamma_{k+1})$
  is homotopic to the standard path segment
  \[
    (\gamma_{k-1} = \rho_0,\delta_1,\rho_1,\ldots,\delta_s,\rho_s=\gamma_{k+1})
  \]
  where $\rho_i$ is obtained from $\gamma_k$ by collapsing the $i$th maximal forest $F_i$
  in the interpolation
  and $\delta_i$ is obtained by collapsing $F_{i-1}\cap F_i$.
\end{proof}

With the proposition in hand, we are ready to prove the theorem.

\begin{proof}[Proof of \Cref{mainsemistability}.]
  Let $P$ be a loop in $L - B_{N(k)}$ based at $\rho(N(k))$ and let $n \ge N(k)$.
  By concatenating at both ends with an arbitrary path in $L - B_{N(k)}$
  from $\rho(N(k))$ to $\rho(n)$,
  we obtain a loop $P'$ based at $\rho(n)$.
  (Such a path exists by \Cref{mainconnectivity}.)
  By \Cref{allpathsstandard},
  $P'$ is homotopic relative to its basepoint to a standard path
  by a homotopy in $L - B_{N(k)}$.
  By the proof of \Cref{mainconnectivity},
  since $\|\rho(n)\| > N(n)$,
  we may push $P'$ outside of $B_n$ by a homotopy.
  This homotopy strictly increases the norm of briar patches along it.
  Since $P'$ began in $L - B_{N(k)}$
  and $N(k)$ is chosen so that the star of any briar patch in $L - B_{N(k)}$
  has empty intersection with the ball $B_k$,
  this homotopy avoids $B_k$.
\end{proof}

To complete the proof of \Cref{semistability},
we need only consider the cases where $\out(F)$ has infinitely many ends.
In the case where $\dim L = 1$,
we see that $\out(F)$ is virtually free.
Since free groups are semistable at each end
(consider the Mittag-Leffler condition in a tree)
and by~\cite[Proposition 16.5.3]{Geoghegan}
semistability passes to and is inherited from finite-index subgroups,
we see that $\out(F)$ is semistable at each end in this case.

Thus it remains to consider the case of $F = A_1 * A_2 * \mathbb{Z}$.
Observe that $A_1$ and $A_2$ are a complete set of representatives of
the conjugacy classes of maximal finite subgroups
of $F$, so these conjugacy classes are permuted by $\out(F)$.
There is a normal subgroup of index at most two in $\out(F)$
comprising the pointwise stabilizer of these conjugacy classes.
Call this subgroup $G$.

If $A$ is a group, the group $\hol(A) = A \rtimes \aut(A)$ always admits an automorphism $\theta$
(which is an involution when it is nontrivial)
sending an element $(a,\psi)$, with $a \in A$ and $\psi \in \aut(A)$,
to the element $(a^{-1}, \iota_{a^{-1}}\psi)$, where $\iota_{a^{-1}}$ is the inner automorphism
$x \mapsto axa^{-1}$.

Fix a generator $t$ of a $\mathbb{Z}$ free factor.
The group $G$ contains the finite group $(\hol(A_1) \times \hol(A_2))\rtimes C_2$,
where the cyclic group of order two acts by $\theta$ on each direct factor.
Explicitly, the subgroup $A_1$ acts by left multiplication on $t$,
the subgroup $A_2$ acts by right multiplication on $t$,
the subgroup $\aut(A_1)$ acts on the $A_1$ free factor, $\aut(A_2)$ on the $A_2$ free factor,
and $C_2$ acts by the automorphism
\[
  \tilde{\theta} = \begin{cases}
    a_1 &\mapsto a_1 \\
    a_2 &\mapsto t a_2 t^{-1} \\
    t &\mapsto t^{-1}.
  \end{cases}
\]
One checks that the intersection of this group of \emph{automorphisms}
with the subgroup of inner automorphisms is trivial,
so it injects into $\out(F)$ and in fact into $G$.

We will also consider the infinite, one-ended subgroup of automorphisms
generated by $\hol(A_1)\times\hol(A_2)$ and the automorphism ${\tau}$
which simply inverts $t$.
This subgroup, which we will call $H$, also injects into $G$.
To see that $H$ is one-ended, observe that it has a map to the finite groups
$\aut(A_1\times A_2)$ and $C_2$,
defined by first using the universal property to induce an automorphism of
$A_1 \times A_2 \times \mathbb{Z}$ and then looking alternately
at the action on the torsion subgroup
or the action on $\mathbb{Z}$.
The intersection $H_0$ of the kernels, which has finite index in $H$,
comprises those automorphisms which (one checks) act trivially on the $A_1$ and $A_2$ free factors
and sends $t$ to an element $gth$, where $g$ and $h$ are elements of $A_1 * A_2$,
so $H_0$ is isomorphic to $(A_1 * A_2) \times (A_1 * A_2)$, a one-ended group
which is virtually the direct product of two free groups.

\begin{prop}
  The group $G$ is isomorphic to the amalgamated free product
  \[
    ((\hol(A_1)\times\hol(A_2))\rtimes C_2) *_{\hol(A_1)\times\hol(A_2)} H.
  \]
\end{prop}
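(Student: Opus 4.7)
The plan is to prove the decomposition by exhibiting an action of $G$ on a simplicial tree $T$ with one orbit of edges and two orbits of vertices, having edge stabilizer $C := \hol(A_1) \times \hol(A_2)$ and vertex stabilizers $K := (\hol(A_1)\times\hol(A_2))\rtimes C_2$ and $H$, and then invoking Bass--Serre theory.

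The first step is a purely algebraic computation inside $\out(F)$: I will check that $K$ and $H$ are indeed embedded as described and that $K \cap H = C$. The containment $C \subseteq K \cap H$ is built into the construction in the paragraph preceding the proposition. For the reverse containment, the coset $K/C$ is generated by $\tilde{\theta}$ and $H/C$ by $\tau$, so the intersection is determined by whether $\tilde{\theta}$ and $\tau$ agree modulo $C$ in $\out(F)$. But the composition $\tilde{\theta}\tau$ fixes $a_1$ and $t$ while sending $a_2 \mapsto ta_2t^{-1}$; this is a partial conjugation of the factor $A_2$ by $t$, which represents a nontrivial element of $\out(F)$ (this is standard for free products, as the global inner automorphism $\iota_t$ does not fix the $A_1$-factor). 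Hence $K \cap H = C$.

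The second step is to construct $T$. The briar patches with a single active vertex for $F = A_1 * A_2 * \mathbb{Z}$ come in two $G$-orbits, represented by $\tau_{(1)}$ (the $\mathbb{Z}$-loop sitting at the $A_1$-vertex with $A_2$ as a valence-one leaf) and $\tau_{(2)}$ (the dual configuration). The two representatives are joined by an elementary Whitehead path through the triangular briar patch with vertex groups $A_1, A_2, 1$. Define $T$ to have vertex set the $G$-orbit of the midpoints of these Whitehead paths, with edges encoding the pair $\{\tau_{(1)}, \tau_{(2)}\}$; equivalently, $T$ is a $G$-invariant subcomplex of $L(F)$ obtained by equivariantly contracting away the remaining simplicial structure. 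Computing stabilizers: the stabilizer of $\tau_{(1)}$ in $G$ is generated by $\aut(A_i)$ acting on the vertex groups, the ``loop-shifts'' $t \mapsto a_1 t$ and $t \mapsto t a_2$ (realized by marking-equivalence coming from conjugation by vertex-group elements), and the involution $\tau$ inverting the loop, which together generate exactly $H$; the edge stabilizer is the subgroup that additionally fixes the loop orientation, namely $C$; and the midpoint of a $T$-edge is stabilized in $G$ by $H$ together with $\tilde{\theta}$ (which swaps $\tau_{(1)}$ and $\tau_{(2)}$), generating $K$.

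The third and main step is verifying that $T$ is indeed a tree, i.e., that it is connected and simply connected. Connectivity is immediate from the Collins--Zieschang Lemma: $\out(F)$, and hence $G$, is generated by Whitehead automorphisms and factor automorphisms, and all such generators either lie in a vertex stabilizer or carry one edge of $T$ to an adjacent one, so the $G$-orbit of a single edge is connected. Simple connectivity requires showing that the relations among these generators in $G$ factor through relations supported in single vertex or edge stabilizers; this is the analogue of peak-reduction from~\cite{CollinsZieschang} but performed in the restricted subgroup $G$, and I expect this to be the main technical obstacle. An alternative route is to use the one-endedness of $H$ (verified at the end of the previous paragraph in the excerpt) together with Stallings' ends theorem applied to $G$ --- the splitting over a finite subgroup predicted by~\cite{MyselfCAT0} must occur over a conjugate of $C$ by the maximality properties of $H$, and the vertex groups of the JSJ-style decomposition are then forced to be $K$ and $H$ by a rigidity argument. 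Once $T$ is known to be a tree with the stated stabilizers and fundamental domain, Bass--Serre theory yields $G \cong K *_C H$.
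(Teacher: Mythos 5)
Your proposal takes a genuinely different route---a geometric Bass--Serre argument rather than the paper's presentation-theoretic one (the paper works directly with Marchand's form of the Fouxe-Rabinovitch presentation of $\out(F)$, performs Tietze transformations, and checks relation-by-relation that every defining relator lies in a vertex group). But the proposal has real gaps.

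First, a concrete error in the algebraic warm-up. You assert that ``$H/C$ is generated by $\tau$'' and then deduce $K\cap H = C$ by comparing $\tilde{\theta}$ and $\tau$ modulo $C$. But $C = \hol(A_1)\times\hol(A_2)$ is finite while $H$ is infinite, so $C$ has infinite index in $H$ and the coset space $H/C$ is not a two-element set. The issue is that $\tau$ does \emph{not} normalize $C$: conjugating the left transvection $\lambda_1^{(\gamma_1)}$ (an element of $C$) by $\tau$ produces the right transvection $\rho_1^{(\gamma_1^{-1})}$, which lies outside $C$. So $H = \langle C,\tau\rangle$ is not $C\cup C\tau$, and the stated reason for $K\cap H = C$ does not hold. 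One actually needs a different argument here (for instance, comparing the images in $\aut(A_1\times A_2)\times C_2$ as the paper does when analyzing $H_0$, and noting that $\tilde{\theta}$ permutes the conjugacy classes in a way that elements of $C$ and $H_0$ do not).

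Second, and more seriously, the heart of the argument---that the proposed complex $T$ is a tree, equivalently that no relations hold among $K$ and $H$ beyond those over $C$---is not carried out. You explicitly flag it as ``the main technical obstacle'' and defer it, and the alternative route via Stallings' theorem and a ``JSJ-style rigidity argument'' is not given in any detail; in particular it is not clear why a splitting of $G$ over a finite subgroup guaranteed by Stallings would have vertex groups exactly $K$ and $H$ (one would need to rule out splittings with smaller vertex groups, splittings that are HNN extensions, or splittings over other finite subgroups not conjugate to $C$). The paper's proof sidesteps all of this by directly exhibiting that the known presentation of $G$ has the amalgamated-product form, which is shorter once the right presentation is in hand. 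If you want to complete the Bass--Serre approach, you would need to either verify simple connectivity of your complex $T$ by a peak-reduction argument, or prove that $K$ and $H$ generate $G$ and that the natural map $K *_C H \to G$ is injective---and the most efficient known way to do the latter is precisely the presentation computation the paper performs.
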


It follows from the proposition that $G$ and hence $\out(F)$ has a finite-index subgroup
isomorphic to
\[
  (F_m \times F_m) * (F_m \times F_m),
\]
where $F_m$ is a finite-rank free group.
This latter group is a right-angled Artin group,
hence is semistable at each end by results of Mihalik~\cite{Mihalik}.

\begin{proof}
  Firstly, we claim that the subgroup
  $\hol(A_1)\times\hol(A_2)$ together with ${\tau}$ and $\tilde{\theta}$
  generate $G$.
  Indeed, it is not hard to see that $\aut(F)$ is generated by these elements
  together with the family of automorphisms
  \[
    \begin{cases}
      a_1 \in A_1 &\mapsto a_2 a_1 a_2^{-1} \\
      a_2 \in A_2 &\mapsto a_2 \\
      t &\mapsto t
    \end{cases}
    \quad
    \begin{cases}
      a_1 \in A_1 &\mapsto a_1 \\
      a_2 \in A_2 &\mapsto a_1a_2a_1^{-1} \\
      t &\mapsto t
    \end{cases}
    \quad
    \begin{cases}
      a_1 \in A_1 &\mapsto ta_1t^{-1} \\
      a_2 \in A_2 &\mapsto a_2 \\
      t &\mapsto t,
    \end{cases}
  \]
  as well as possibly some automorphism permuting $A_1$ and $A_2$ in the case they are isomorphic.
  (See for instance~\cite{Gilbert} for a finite presentation of $\aut(F)$.)
  Each of these first three differs from an automorphism in
  $\langle\hol(A_1)\times\hol(A_2),\tilde{\tau},\tilde{\theta}\rangle$
  by an inner automorphism,
  so the claim follows.
  To finish the proof, we will use a finite presentation for $\out(F)$,
  essentially due to Fouxe-Rabinovitch~\cite{Fouxe-Rabinovitch},
  following notation of Marchand~\cite{Marchand}.

  Let us fix notation.
  For $\phi_i$ an element of $\aut(A_i)$, we abuse notation and think of
  $\phi_i$ as an element of $G$ via its action on the fixed $A_i$ free factor of $F$.
  The subgroup of $G$ generated by the $\phi_i$ as $\phi_i$ varies in $\aut(A_i)$
  and as $i$ varies over $\{1,2\}$ is isomorphic to $\aut(A_1)\times\aut(A_2)$.
  Given $i \ne j$ and $\gamma_i \in A_i$, the \emph{partial conjugation} of $A_j$ by $\gamma_i$,
  is the automorphism $\alpha^{(\gamma_i)}_{ij}$ defined as sending
  $\gamma_j \in A_j$ to $\gamma_i^{-1}\gamma_j\gamma_i$ and fixing $A_i$ and $t$.
  We similarly define $\alpha_{ti}$ where we conjugate by $t$ instead of $\gamma_i$,
  so $\gamma_i \in A_i \mapsto t^{-1}\gamma_i t$.
  Finally we define the \emph{right} and \emph{left transvections} $\rho_i^{(\gamma_i)}$
  and $\lambda_i^{(\gamma_i)}$ as acting on $t$ by right (respectively left) multiplication
  by $\gamma_i \in A_i$.
  The group generated by the $\lambda_1^{(\gamma_1)}$, the $\rho_2^{(\gamma_2)}$
  and the $\Phi_i$ is the subgroup $\hol(A_1)\times\hol(A_2)$ described above.

  As a special case of~\cite[Theorem 5.2]{Marchand},
  we have that a defining system of relations for $G$
  with respect to the generating set
  \[
    \{\aut(A_1),\aut(A_2),\alpha_{ij}^{(\gamma_i)},\alpha_{ti},
    \lambda_{i}^{(\gamma_i)},\rho_i^{(\gamma_i)},\tau\}
  \]
  
  is given by the following.
  We follow the numbering in Marchand's paper, omitting relations that do not occur.
  Implicitly one includes relations specifying that, for instance,
  the subgroup generated by $\lambda_i^{(\gamma_i)}$ for fixed $i$
  but variable $\gamma_i \in A_i$ is isomorphic to $A_i$.
  (For the worried reader: think of automorphisms as acting on $F$ on the left.)
  \begin{enumerate}
  \item $\varphi_1\varphi_2 = \varphi_2\varphi_1$, so that
    $\langle\aut(A_1),\aut(A_2)\rangle\cong \aut(A_1)\times\aut(A_2)$.
  \item $\varphi_i\alpha_{tj} = \alpha_{tj}\varphi_i$ for $\varphi_i \in \aut(A_i)$
    and $i \ne j$.
  \item $\varphi_i \alpha_{ij}^{(\gamma_i)} = \alpha_{ij}^{(\varphi_i\gamma_i)}\varphi_i$
    for $\varphi_i \in \aut(A_i)$ and $\gamma_i \in A_i$.
  \item $\alpha_{t1}\alpha_{t2} = \alpha_{t2}\alpha_{t1}$.
  \item $\alpha_{ti}\alpha_{ij}^{(\gamma_i)}\alpha_{ti}^{-1}
    = \alpha_{tj}^{-1}\alpha_{ij}^{(\gamma_i)}\alpha_{tj}$
    for $\gamma_i \in A_i$.
    \setcounter{enumi}{11}
  \item $\lambda_i^{(\gamma_i)}\rho_j^{(\gamma_j)} = \rho_j^{(\gamma_j)}\lambda_i^{(\gamma_i)}$
    for $\gamma_i \in A_i$ and $\gamma_j \in A_j$, with $i$ and $j$ not necessarily distinct.
    Also $\tau\lambda_i^{(\gamma_i)} = \rho_i^{(\gamma_i^{-1})}\tau$.
  \item $\alpha_{ij}^{(\gamma_i)}\tau = \tau\alpha_{ij}^{(\gamma_i)}$.
  \item $\alpha_{ti}\tau = \tau\alpha_{ti}^{-1}$.
  \item $\rho_i^{(\gamma_i)}\varphi_j = \varphi_j\rho_i^{(\gamma_i)}$ for $\varphi_j \in \aut(A_j)$
    and $i \ne j$.
  \item $\rho_{i}^{(\varphi_i\gamma_i)}\varphi_i = \varphi_i\rho_i^{(\gamma_i)}$ for $\gamma_i \in A_i$,
    $\varphi_i \in \aut(A_i)$. This, together with some of the relations above implies that
    the subgroup generated by the $\lambda_1^{(\gamma_1)}$, $\rho_2^{(\gamma_2)}$ and the
    $\varphi_i$ is isomorphic to $\hol(A_1)\times\hol(A_2)$.
    \setcounter{enumi}{18}
  \item $\rho_i^{(\gamma_i)}\alpha_{ij}^{(\gamma_i)} = \alpha_{ij}^{(\gamma_i)}\rho_i^{(\gamma_i)}$.
  \item $\rho_i^{(\gamma_i)}\alpha_{tj} = \alpha_{tj}\rho^{(\gamma_i)}_i\alpha_{ij}^{(\gamma_i)}$.
  \item ${(\rho_i^{(\gamma_i)})}^{-1}\rho_j^{(\gamma_j)}\rho_i^{(\gamma_i)}\alpha_{ij}^{(\gamma_i)}
    = \alpha_{ij}^{(\gamma_i)}\rho_j^{(\gamma_j)}$.
  \item $\alpha_{ti}\rho_i^{(\gamma_i)} = \lambda_i^{(\gamma_i)}\alpha_{ti}\varphi_i$,
    where $\phi_i$ is the automorphism of $A_i$ defined by $x \mapsto \gamma_i^{-1}x\gamma_i$.
  \item $\tau\varphi_i = \varphi_i\tau$ for $\varphi_i \in \aut(A_i)$.
    \setcounter{enumi}{-1}
  \item $\varphi_i \alpha_{ij}^{(\gamma_i)} \rho_{i}^{(\gamma_i)}{(\lambda_{i}^{(\gamma_i)})}^{-1} = 1$,
    where $\varphi_i$ is the automorphism of $A_i$ defined by $x \mapsto \gamma_i^{-1}x\gamma_i$.
    Also $\alpha_{t1}\alpha_{t2} = 1$.
  \end{enumerate}
  We introduce a new generator $\tilde\theta$
  and the relation $\tilde\theta = \tau\alpha_{t2}$.
  As remarked already, the relations in items 0 and 12 allow us to discard
  generators down to the more minimal
  \[
    \{\hol(A_1)\times\hol(A_2),\tau,\tilde\theta\}.
  \]
  We have already seen that $\langle\hol(A_1)\times\hol(A_2),\tilde{\theta}\rangle$
  is isomorphic to the semidirect product
  $(\hol(A_1)\times\hol(A_2))\rtimes C_2$.
  Easy Tietze transformations yield a defining system of relations
  for $G$
  for the generating set
  \[
    \{\hol(A_1)\times\hol(A_2), \tilde{\theta}, \tau\}.
  \]
  Relevant to our purposes,
  we just need to check that every relation involving $\tilde{\theta}$
  takes place in $(\hol(A_1)\times\hol(A_2))\rtimes C_2$
  and every relation involving $\tau$ takes place in $H$.
  Thus we need only examine relations 2, 5, 14, 20, and 22.
  \begin{enumerate}
    \setcounter{enumi}{1}
  \item The relation $\varphi_1\alpha_{t2} = \alpha_{t2}\varphi_1$ becomes
    $\varphi_1\tau\tilde{\theta} = \tau\tilde{\theta}\varphi_1$.
    Because $\hol(A_1)\times\hol(A_2)$ is normal in
    $(\hol(A_1)\times\hol(A_2))\rtimes C_2$, right-multiplying by $\tilde{\theta}^{-1}$
    and then using a relation discussed before the proof shows that this relation
    involves only elements of $H$. A similar argument works for
    $\varphi_2\alpha_{t1} = \alpha_{t1}\varphi_2$, which becomes
    $\varphi_2\tilde{\theta}\tau = \tilde{\theta}\tau\varphi_2$.
    \setcounter{enumi}{4}
  \item This relation, observing that $\alpha_{t2} = \alpha_{t1}^{-1}$, is spurious.
    \setcounter{enumi}{13}
  \item This relation is also spurious: it says either that
    $\alpha_{t2}\tau\alpha_{t2} = \tau\tilde{\theta}\tau\tau\tilde{\theta} = \tau$,
    which follows because $\tau$ and $\tilde{\theta}$ have order two, or that
    $\alpha_{t1}\tau\alpha_{t1} = \alpha_{t2}^{-1}\tau\alpha_{t2}^{-1} = \tau$,
    which follows from the previous relation.
    \setcounter{enumi}{19}
  \item Because $\hol(A_1)\times\hol(A_2)$ is normal in the semidirect product,
    the case of this relation of the form
    $\rho_2^{(\gamma_2)}\alpha_{t1} = \alpha_{t1}\rho_2^{(\gamma_2)}\alpha_{21}^{(\gamma_2)}$
    may be written with only elements of $H$ by writing $\alpha_{t1} = \tilde{\theta}\tau$
    and left multiplying by $\tilde{\theta}^{-1}$.
    The same is true for the relation with the roles of $1$ and $2$ swapped,
    where instead we left multiply by $\tilde{\theta}^{-1}\tau^{-1}$ and observe
    that after conjugating $\rho_1^{(\gamma_1)}$ by $\tau$ we obtain an element of
    $\hol(A_1)\times\hol(A_2)$.
    \setcounter{enumi}{21}
  \item The same method of proof as in the previous case works:
    for example the relation $\alpha_{t1}\rho_1^{(\gamma_1)} = \lambda_1^{(\gamma_1)}\alpha_{t1}\varphi_1$
    may be rewritten as
    $\tau\rho_1^{(\gamma_1)} = \tilde{\theta}^{-1}\lambda_1^{(\gamma_1)}\tilde{\theta}\tau\varphi_1$,
    which takes place in $H$.
  \end{enumerate}
  Thus we have demonstrated that $G$ splits as the amalgamated free product in the statement.
\end{proof}
\section*{Appendix: a direct proof of \Cref{maintheorem}}
Suppose $F = A_1 * A_2 * \cdots * A_n * F_k$.
In view of results of Das~\cite{Das} and Vogtmann~\cite{Vogtmann},
one-endedness of $\out(F)$ is only new when both $n$ and $k$ are positive.
To illustrate a more elementary attack on the proof of \Cref{maintheorem},
we will give a sketch of an argument supposing that $n \ge 3$ and $k \ge 3$.
In view of our proof of \Cref{maintheorem}, it is clear that this argument is not optimal;
we leave its improvement as a final question.

As we saw in the previous section, there exists a finite-index subgroup $G$ of $\out(F)$
comprising those outer automorphisms preserving the conjugacy class of each $A_i$.

Fix a free basis $\{x_{n+1},\ldots,x_{n+k}\}$ for $F_k$.
Following notation from the previous section, we have for each nontrivial $\gamma_i \in A_i$,
each $j$ satisfying $n+1 \le j \le n + k$ and each $\ell$ satisfying $1 \le k \le n + k$,
\emph{partial conjugations} $\alpha_{i\ell}^{(\gamma_i)}$ and $\alpha_{j\ell}$
and \emph{right and left transvections,} which when $\ell \in \{n+1,\ldots,n+k\}$
we write as $\rho_{\ell j}$ and $\lambda_{\ell j}$, while when $i \in \{1,\ldots,n\}$,
we write as $\rho_{ij}^{(\gamma_i)}$ and $\lambda_{ij}^{(\gamma_i)}$,
as they depend on a choice of nontrivial $\gamma_i \in A_i$.

The group $G$ is generated by, one checks, these partial conjugations, transvections,
the groups $\aut(A_i)$ acting on $A_i$ alone,
permutations of the $x_\ell$ for $\ell \in \{n+1,\ldots,n+k\}$
and the \emph{inversions} $\tau_{\ell}$ which invert $x_\ell$ and fix the other factors.

Here is the argument we will use; it is due to the anonymous referee:
supposing that $\out(F)$ and hence $G$ had more than one end,
it would act on a tree $T$ with finite edge stabilizers.
Every one-ended subgroup of $G$ stabilizes a unique vertex of $T$.
In particular, every subgroup isomorphic to the direct product of two infinite groups stabilizes a unique vertex of $T$.
If $H \le G$ is an elliptic subgroup in an action of $G$ on a tree $T$
and $g \in G$ normalizes $H$, observe that $g$ preserves the fixed-point set of $H$.
In particular, if $H$ stabilizes a unique vertex of $T$, so does its normalizer.
If $H$ and $H'$ are elliptic subgroups in the action of $G$ on $T$,
their intersection $H \cap H'$ is also elliptic.
If edge stabilizers in $T$ are finite, then
if $H \cap H'$ is infinite, we conclude that $H$ and $H'$ fix the same point.

We find a family $\Gamma_1,\ldots,\Gamma_m$ of subgroups of $G$,
each isomorphic to the direct product of two infinite groups,
such that $\Gamma_i \cap \Gamma_{i+1}$ is infinite for each $i$,
and such that each element of the above generating set is contained in or normalizes some $\Gamma_i$.
It follows from the argument above that in any action of $G$ on a tree $T$ with finite edge stabilizers,
each $\Gamma_i$ and their normalizers stabilize a unique vertex of $T$
and moreover these vertices are a single vertex.
It follows that $G$ acts with global fixed point on $T$,
from which we conclude that $G$ is one ended.

For each two-element subset of $\{1,\ldots,n\}$, say $\{i,j\}$,
it is not hard to see that the
partial conjugations $\alpha_{i\ell}^{(\gamma_i)}$ and $\alpha_{j\ell}^{(\gamma_j)}$
as $\gamma_i$ and $\gamma_j$ vary in $A_i$ and $A_j$ respectively and $\ell$ varies in $\{1,\ldots,n\} - \{i,j\}$
together with the left and right transvections
$\rho_{i\ell}^{(\gamma_i)}$, $\rho_{j\ell}^{(\gamma_j)}$, $\lambda_{i\ell}^{(\gamma_i)}$ and $\lambda_{j\ell}^{(\gamma_j)}$
as $\ell$ varies in $\{n+1, \ldots, n+k\}$
generate a group $H_{ij}$ isomorphic to the direct product of $2k + n - 2$ copies of the infinite group $A_i * A_j$.
If we fix $\ell$ satisfying $n+1 \le \ell \le n + k$,
the group $H_\ell$ generated by left and right transvections on the other elements of our fixed free basis
with ``acting letter'' $x_\ell$ is isomorphic to a free abelian group of rank $2k - 2$.
By mixing and matching products of direct factors of the $H_{ij}$ and $H_\ell$ as $i$, $j$ and $\ell$ vary,
we obtain a family of subgroups $\Gamma_1,\ldots,\Gamma_m$,
each isomorphic to the direct product of two infinite groups,
satisfying that $\Gamma_a \cap \Gamma_{a+1}$ is infinite for each $a$ satisfying $1 \le a \le m - 1$,
and such that each partial conjugation or transvection contained in some $H_{ij}$ or $H_\ell$
is contained in some $\Gamma_a$.

It is clear from our construction that because $n \ge 3$ and $k \ge 2$, each partial conjugation or transvection
in the generating set for $G$ sketched above normalizes (in fact centralizes) some $\Gamma_a$.
In fact, the elements of $\aut(A_i)$ and the inversions also clearly normalize some $\Gamma_a$.
Since we assume that $k \ge 3$, the same is also true of any transposition of two elements of
$\{x_{n+1},\ldots,x_{n+k}\}$.
It follows that the subgroups $\Gamma_a$ allow us to conclude that $G$ and hence $\out(F)$ is one ended.

Let us remark that if we knew that each $\Gamma_a$ was furthermore \emph{undistorted} in $G$,
we could conclude a stronger statement,
namely that $G$ and hence $\out(F)$ is \emph{thick of order one} in the sense of Behrstock,
Dru\c{t}u and Mosher~\cite{BehrstockDrutuMosher}.
When $k = 0$ and $n \ge 4$, this is the main result of Das's paper~\cite{Das},
and for $k \ge 3$ and $n = 0$, this was proven in~\cite{BehrstockDrutuMosher}.

\begin{question}
  Let $F = A_1 * \cdots * A_n * F_k$,
  and suppose that $\out(F)$ is one ended.
  Is it also thick of order one?
\end{question}

\bibliographystyle{amsplain}
\bibliography{bib.bib}
\end{document}